\newcommand{\C}{\IC}
\newcommand{\tr}{\mathrm{tr}}
\newcommand{\g}{\mathfrak{g}}
\newcommand{\h}{\mathfrak{h}}
\newcommand{\greg}{\mathfrak{g}_{\mathrm{reg}}}
\newcommand{\fsl}{\mathfrak{sl}}
\newcommand{\Id}{\mathrm{Id}}
\newcommand{\F}{\mathcal F}
\newcommand{\ad}{\mathrm{ad}}
\newcommand{\Ad}{\mathrm{Ad}}
\newcommand{\gdreg}{\mathfrak{g}^*_{\mathrm{reg}}}
\numberwithin{equation}{section}
\newtheorem{theorem}{Theorem}[section]
\newtheorem{proposition}[theorem]{Proposition}
\newtheorem{lemma}[theorem]{Lemma}
\newtheorem{conjecture}[theorem]{Conjecture}
\newtheorem{mtheorem}{Main Theorem}
\theoremstyle{definition}
\newtheorem{definition}[theorem]{Definition}
\newtheorem{example}[theorem]{Example}
\newtheorem{remark}[theorem]{Remark}
\newcommand {\IC}{\mathbb{C}}
\newcommand {\reg}{_{\operatorname{reg}}}
\newcommand {\G}{\mathcal G}
\newcommand {\Hom}{\operatorname{Hom}}
\newcommand{\SL}{\operatorname{SL}}
\newcommand{\tto}{\;\substack{\longrightarrow\\[-9pt] \longrightarrow}\;}
\newcommand{\MT}{\mathbf{MT}}
\newcommand{\AMT}{\mathbf{AMT}}
\renewcommand{\H}{\mathcal{H}}
\newcommand{\too}{\longrightarrow}
\newcommand{\Spec}{\operatorname{Spec}}
\newcommand{\s}{\subseteq}
\newcommand{\mto}{\mapsto}
\newcommand{\mtoo}{\longmapsto}
\newcommand{\sll}[1]{\mkern-4mu\mathbin{/\mkern-5mu/}_{\mkern-4mu{#1}}}
\newcommand{\defn}[1]{\textbf{\textit{#1}}}
\newcommand{\im}{\operatorname{im}}
\newcommand{\htimes}{\times^{\mathrm{h}}}
\newcommand{\stimes}{\times^{\mathrm{s}}}
\newcommand{\obj}{^{0}}
\newcommand{\arr}{^{1}}
\newcommand{\sss}{\mathsf{s}}
\newcommand{\ttt}{\mathsf{t}}
\newcommand{\mmm}{\mathsf{m}}
\newcommand{\iii}{\mathsf{i}}
\newcommand{\uuu}{\mathsf{1}}
\newcommand{\pr}{\mathrm{pr}}
\newcommand{\I}{\mathcal{I}}
\renewcommand{\L}{\mathcal{L}}
\newcommand{\M}{\mathcal{M}}
\newcommand{\K}{\mathcal{K}}
\newcommand{\Cob}{\mathbf{Cob}}
\newcommand{\WS}{\mathbf{WS}_1}
\newcommand{\WSQ}{\mathbf{WS}"_{\mkern-11mu 1}}
\newcommand{\AWSQ}{\mathbf{AWS}"_{\mkern-11mu 1}}
\newcommand{\A}{\mathcal{A}}
\newcommand{\fp}[2]{\leftindex_{#1}\times_{#2}\,}
\newcommand{\aff}{^{\mathrm{Aff}}}
\begin{document}
	
	\title[The Moore--Tachikawa conjecture via shifted symplectic geometry]{The Moore--Tachikawa conjecture via\\
		shifted symplectic geometry}
	
	\author[Peter Crooks]{Peter Crooks}
	\author[Maxence Mayrand]{Maxence Mayrand}
	\address[Peter Crooks]{Department of Mathematics and Statistics\\ Utah State University \\ 3900 Old Main Hill \\ Logan, UT 84322, USA}
	\email{peter.crooks@usu.edu}
	\address[Maxence Mayrand]{D\'{e}partement de math\'{e}matiques \\ Universit\'{e} de Sherbrooke \\ 2500 Bd de l’Universit\'{e} \\ Sherbrooke, QC, J1K 2R1, Canada}
	\email{maxence.mayrand@usherbrooke.ca}
	
	\subjclass{53D17 (primary); 14L30, 57K16 (secondary)} 
	\keywords{Moore--Tachikawa conjecture, quasi-symplectic groupoid, shifted symplectic geometry, TQFT}
	
	\begin{abstract}
		We use shifted symplectic geometry to construct the Moore--Tachikawa topological quantum field theories (TQFTs) in a category of Hamiltonian schemes. Our new and overarching insight is an algebraic explanation for the existence of these TQFTs, i.e. that their structure comes naturally from three ingredients: Morita equivalence, as well as multiplication and identity bisections in abelian symplectic groupoids. Using this insight, we generalize the Moore--Tachikawa TQFTs in two directions. 

		The first generalization concerns a $1$-shifted version of the Weinstein symplectic category $\WS$. Each  abelianizable quasi-symplectic groupoid $\G$ is shown to determine a canonical $2$-dimensional TQFT $\eta_{\G}:\Cob_2\longrightarrow\WS$. We recover the open Moore--Tachikawa TQFT and its multiplicative counterpart as special cases. 
		
		Our second generalization is an affinization process for TQFTs. We first enlarge Moore and Tachikawa's category $\MT$ of holomorphic symplectic varieties with Hamiltonian actions to $\AMT$, a category of affine Poisson schemes with Hamiltonian actions of affine symplectic groupoids.
		We then show that if $\G \tto X$ is an affine symplectic groupoid that is abelianizable when restricted to an open subset $U \s X$ statisfying Hartogs' theorem, then $\G$ determines a TQFT $\eta_\G : \Cob_2 \too \AMT$.
		In more detail, we first devise an affinization process sending 1-shifted Lagrangian correspondences in $\WS$ to Hamiltonian Poisson schemes in $\AMT$.
		The TQFT is obtained by composing this affinization process with the TQFT $\eta_{\G|_U} : \Cob_2 \too \WS$ of the previous paragraph.
		Our results are also shown to yield new TQFTs outside of the Moore--Tachikawa setting.

	\end{abstract}
	
	
	\maketitle
	\setcounter{tocdepth}{2}
	\tableofcontents
	\vspace{-10pt}

	
	\section{Introduction}
	
	\subsection{Context}
	The Moore--Tachikawa conjecture \cite{moo-tac:11} inspires important, topical, and ongoing research at the interface of geometric representation theory, Lie theory, low-dimensional topology, holomorphic symplectic geometry, and theoretical physics. It arises in a string-theoretic context, from a conjectural class of 6-dimensional superconformal quantum field theories (SQFTs). One may associate such a class to each connected complex simple Lie group $G$. The Higgs branch of this conjectural class would be a 2-dimensional topological quantum field theory (TQFT) $\eta_G:\Cob_2\longrightarrow\MT$, i.e. a symmetric monoidal functor, valued in a category of holomorphic symplectic varieties with Hamiltonian actions. The objects of $\MT$ are complex semisimple groups, and a morphism from $G_1$ to $G_2$ is a holomorphic symplectic variety with a Hamiltonian action of $G_1 \times G_2$. The composition of $M \in \Hom(G_1, G_2)$ and $N \in \Hom(G_2, G_3)$ is the Hamiltonian reduction $(M \times N) \sll{} G_2$. Properties of this TQFT would necessarily include $\eta_G(S^1)=G$ and $\eta_G(\begin{tikzpicture}[
		baseline=-2.5pt,
		every tqft/.append style={
			transform shape, rotate=90, tqft/circle x radius=4pt,
			tqft/circle y radius= 2pt,
			tqft/boundary separation=0.6cm, 
			tqft/view from=incoming,
		}
		]
		\pic[
		tqft/cup,
		name=d,
		every incoming lower boundary component/.style={draw},
		every outgoing lower boundary component/.style={draw},
		every incoming boundary component/.style={draw},
		every outgoing boundary component/.style={draw},
		cobordism edge/.style={draw},
		cobordism height= 1cm,
		];
	\end{tikzpicture})=G\times\mathcal{S}$, where $\mathcal{S}$ is a Kostant slice in the Lie algebra of $G$. Moore and Tachikawa's conjecture is that there exists a 2-dimensional TQFT $\Cob_2\longrightarrow\MT$ satisfying these two conditions. While formulatable as a problem in pure mathematics, a resolution of this conjecture would clearly have implications for string theory. An affirmative answer would be some evidence that the aforementioned SQFTs actually exist. We refer the reader to Tachikawa's ICM paper \cite{tac:18} for further context.
	
	The Moore--Tachikawa conjecture was formulated less than 15 years prior to our completing this manuscript. In this short time, it has witnessed progress and been found to make deep connections to adjacent subjects \cite{ara:18,bal-may:22,bie:23,bra-fin-nak:19,cal:14,cal:15,cro-may:22,dan-gri-mar-zon:24,dan-kir-mar:24,gin-kaz:23,tac:18}. Progress on the conjecture itself is made in unpublished work of Ginzburg--Kazhdan \cite{gin-kaz:23}. These authors first construct the \textit{open Moore--Tachikawa varieties}, thereby proving a relative of the Moore--Tachikawa conjecture. If the affinizations of such varieties were known to be of finite type, then the Moore--Tachikawa conjecture would follow. The work of Braverman--Finkelberg--Nakajima \cite{bra-fin-nak:19} implies that the relevant affine schemes are of finite type for $G$ of Lie type $A$. In particular, the Moore--Tachikawa conjecture holds in type $A$.
	
	Ginzburg and Kazhdan's approach to the Moore--Tachikawa conjecture involves a kind of Hamiltonian reduction by abelian group schemes. This is formalized and generalized in \cite{cro-may:22}, where we introduce Hamiltonian reduction by symplectic groupoids along pre-Poisson subvarieties. We thereby recover the open Moore--Tachikawa varieties constructed by Ginzburg--Kazhdan, as well as their affinizations. In very rough terms, this is achieved by replacing $G$ with its cotangent groupoid $T^*G\tto\g^*$. One might therefore suspect the following: our recovering the Ginzburg--Kazhdan construction is a shadow of a more general ``affinization process", in which the results of \cite{cro-may:22} are used to affinize TQFTs constructed via symplectic groupoids. This is the first of two main themes in our manuscript.
	
	An appealing feature of the Moore--Tachikawa conjecture is its apparent amenability to a wide variety of techniques, including some not used by Ginzburg--Kazhdan or Braverman--Finkelberg--Nakajima. It is in this context that we are led to \textit{shifted symplectic geometry}, as introduced by Pantev--T\"oen--Vaquie--Vezzosi \cite{ptvv:13}. In subsequent work \cite{cal:15}, Calaque suggests that shifted symplectic geometry should facilitate a rigorous construction of the Moore--Tachikawa TQFT. This is largely consistent with a result in \cite{bal-may:22}, where B\u{a}libanu and the second named author develop a Hamiltonian reduction theory for $1$-shifted symplectic groupoids (a.k.a. quasi-symplectic groupoids). One consequence is a multiplicative counterpart of the Ginzburg--Kazhdan construction described above. With this in mind, the second main theme of our manuscript is the relevance of shifted symplectic geometry to the Moore--Tachikawa conjecture.  
	
	\subsection{Overarching principle}
	The overarching principle of this manuscript is that the structure underlying certain TQFTs is encoded in multiplication and identity bisections in abelian symplectic groupoids. We thereby offer an algebraic explanation for the existence of the Moore--Tachikawa TQFT in this enlarged category, as well as that of other TQFTs. These ideas are made somewhat more precise in the next few subsections, and completely precise in the main body of the manuscript.   
	
	\subsection{TQFTs in a $1$-shifted Weinstein symplectic category}
	For the moment, we may work in the smooth or holomorphic categories.
	A first useful idea, suggested by Calaque \cite{cal:15}, is to temporarily replace the Moore--Tachikawa category $\MT$ of holomorphic symplectic varieties with a 1-shifted version $\WS$ of Weinstein symplectic category \cite{wei:82,wei:10,weh-woo:10}; see Section \ref{Section: Shifted}. The objects of $\WS$ are quasi-symplectic groupoids \cite{xu:04,bur-cra-wei-zhu:04}, interpreted as presentations of 1-shifted symplectic stacks.
	We then recall that a 2-dimensional TQFT in a symmetric monoidal category $\mathbf{C}$ is equivalent to a commutative Frobenius object in $\mathbf{C}$.
	We subsequently establish that any abelian symplectic groupoid gives rise to a commutative Frobenius object in $\WS$.
	The multiplication and unit of the aforementioned commutative Frobenius object are essentially given by groupoid multiplication and the identity section, respectively.
	
	 In the interest of associating TQFTs to a wider class of groupoids, we proceed as follows. A quasi-symplectic groupoid is called \textit{abelianizable} if it is Morita equivalent to abelian symplectic groupoid. A notion of Morita transfer then implies that every abelianizable quasi-symplectic groupoid determines a TQFT. This amounts to the following result, the underlying details of which are given in Section \ref{90msjexi}.

	\begin{mtheorem}\label{Theorem: Main 1}
	Every abelianizable quasi-symplectic groupoid $\G$ completes to a commutative Frobenius object in $\WS$ whose product $\G_\mu \in \Hom(\G \times \G, \G)$ and unit $\G_\eta \in \Hom(\star, \G)$ are induced by groupoid multiplication and identity bisection in the abelianization, respectively. It thereby determines a TQFT $\eta_{\G}:\Cob_2\longrightarrow\WS$.
	\end{mtheorem}
	
	In this context, one naturally seeks sufficient conditions for a quasi-symplectic groupoid $\G\tto X$ to be abelianizable. This leads us to introduce the notion of an \textit{admissible global slice}. We define it to be a submanifold $S\s X$ that intersects every $\G$ orbit in $X$ transversely in a singleton, such that the isotropy group $\G_x$ is abelian for all $x\in S$. We show $\G\tto X$ to be abelianizable if there exists an admissible global slice $S\s X$ with the property that the $3$-form on $X$ pulls back to an exact $3$-form on $S$.  
	
	\subsection{Affinizations of TQFTs} We now work exclusively over $\mathbb{C}$. It is useful to specialize Main Theorem \ref{Theorem: Main 1} as follows. Let $G$ be a connected semisimple affine algebraic group with Lie algebra $\g$. Write $T^*G\reg\tto\greg$ for the pullback of $T^*G\tto\g^*$ to the regular locus $\gdreg\s\g^*$. One may verify that $T^*G\reg\tto\greg$ is Morita equivalent to $\mathcal{Z}_{G}\longrightarrow\mathfrak{c}\coloneqq\mathrm{Spec}((\mathrm{Sym}\hspace{2pt}\g)^G)$, the universal centralizer of $G$. This fact implies that $T^*G\reg\tto\greg$ is abelianizable. By means of Main Theorem \ref{Theorem: Main 1}, $T^*G\reg\tto\greg$ turns out to determine the open Moore--Tachikawa TQFT. As mentioned above, Ginzburg and Kazhdan affinize some of the morphisms in the image of this TQFT\cite{gin-kaz:23}. These affinizations are subsequently shown to satisfy the relations of a TQFT, despite not necessarily being of finite type.
	
	Our second main result is that the aforementioned affinization process applies in far greater generality, and that the TQFT relations essentially follow from Main Theorem \ref{Theorem: Main 1}. To this end, we enlarge $\MT$ to what we call the \textit{algebraic Moore--Tachikawa category} $\AMT$; it is very loosely described as a category with affine symplectic groupoids as objects, and morphisms consisting of isomorphism classes of affine Poisson schemes with Hamiltonian actions of product groupoids. Our previously developed machinery of scheme-theoretic coisotropic reduction \cite{cro-may:24} allows one to compose morphisms in $\AMT$.
	We then devise an affinization process which sends 1-shifted Lagrangian correspondences in $\WS$ to Hamiltonian Poisson schemes in $\AMT$.
	
	In light of the above, we introduce the notion of a \textit{Hartogs abelianizable} affine symplectic groupoid $\G\tto X$. By this, we mean that $\G\tto X$ is abelianizable when restricted to an open subset $U \s X$ whose complement has codimension at least two in $X$.
	We then show that the composition of the TQFT associated to $\G|_U$ in Main Theorem \ref{Theorem: Main 1} with the affinization process gives $\G$ the structure of a commutative Frobenius object in $\AMT$.
	A key ingredient in the proof is that $U \s X$ satisfies Hartogs' theorem, enabling us to complete the relevant structures on $\G|_U$ to $\G$.
	In more detail, our second main result is as follows.
	
	\begin{mtheorem}\label{Theorem: Main 2}
	Let $\G \tto X$ be an affine symplectic groupoid that is abelianizable over an open subset $U \s X$ whose complement has codimension at least two in $X$.
	Then $\G$ completes to a commutative Frobenius object in $\AMT$ whose product $\G_\mu \in \Hom(\G \times \G, \G)$ and unit $\G_\eta \in \Hom(\star, \G)$ are the affinizations of $(\G|_U)_\mu$ and $(\G|_U)_\eta$ in Main Theorem \ref{Theorem: Main 1}, respectively.
	In this way, $\G$ determines a TQFT $\eta_{\G}:\Cob_2\longrightarrow\AMT$.
	\end{mtheorem}
	
	In analogy with the discussion following Main Theorem \ref{Theorem: Main 1}, one should seek sufficient conditions for an affine symplectic groupoid to be Hartogs abelianizable. This leads us to introduce the notion of an \textit{admissible Hartogs slice}; it is defined somewhat analogously to an admissible global slice. We show that an affine symplectic groupoid admitting an admissible Hartogs slice is Hartogs abelianizable. 
	
	Let us again consider the cotangent groupoid $T^*G\tto\g^*$ of a connected complex semisimple affine algebraic group $G$. The universal centralizer witnesses $T^*G\tto\g^*$ as Hartogs abelianizable. Main Theorem \ref{Theorem: Main 2} therefore applies and yields the Moore--Tachikawa TQFT. Perhaps surprisingly, Main Theorem \ref{Theorem: Main 2} turns out to yield a strictly larger class of TQFTs. The following are some details in this direction. 
	
	One might seek conditions on an complex affine algebraic group $G$ for $T^*G\tto\g^*$ to be Hartogs abelianizable. This leads us to introduce the notion of a \emph{Moore--Tachikawa group}, i.e. a complex affine algebraic group $G$ with Lie algebra $\g$ that satisfies the following properties:
	\begin{itemize}
	\item the set $\g^*\reg$ of regular elements has a complement of codimension at least two in $\g^*$;
	\item the stabilizer subgroup $G_\xi$ is abelian for all $\xi \in \g^*\reg$;
	\item the pullback of the cotangent groupoid $T^*G$ to $\g^*\reg$ is abelianizable.
	This holds, in particular, if there exists an affine slice $S \s \g^*\reg$ for the coadjoint action on the regular locus.
	\end{itemize}
	Results of Kostant \cite{kos:59,kos:63} imply that every complex reductive group is Moore--Tachikawa. On the other hand, Main Theorem \ref{Theorem: Main 2} implies that every Moore--Tachikawa group induces a TQFT. These considerations motivate one to find examples non-reductive Moore--Tachikawa groups. We provide one such example in Section \ref{Section: Non-reductive}. We also obtain examples of TQFTs arising from certain Slodowy slices, e.g. a Slodowy slice to the minimal nilpotent orbit in $\mathfrak{sl}_n$.

	\subsection{Organization} Each section begins with a summary of its contents. In Section \ref{Section: 1-shifted}, we develop pertinent results and techniques regarding shifted Lagrangians. This leads to Section \ref{Section: Shifted}, where we define and discuss the $1$-shifted Weinstein symplectic ``category" $\WSQ$ and its completion $\WS$. We then prove Main Theorem 1 in Section \ref{90msjexi}. 
	
	Our attention subsequently turns to the matter of affinizing TQFTs. In Section \ref{Section: Affinization}, we use scheme-theoretic coisotropic reduction to define and study the category $\AMT$. Main Theorem \ref{Theorem: Main 2} is also proved in Section \ref{Section: Affinization}. Section \ref{Section: Special case} is then devoted to the implications of Main Theorem \ref{Theorem: Main 2} for constructing the Moore--Tachikawa TQFT. In Sections \ref{Section: Examples} and \ref{Section: Non-reductive}, we use Main Theorem \ref{Theorem: Main 2} to produce TQFTs different from those of Moore--Tachikawa. Section \ref{Section: Examples} provides a TQFT for an affine symplectic groupoid integrating a Slodowy slice to the minimal nilpotent orbit in $\mathfrak{sl}_n$. Section \ref{Section: Non-reductive} then provides an example of a non-reductive Moore--Tachikawa group.

	\subsection*{Acknowledgements}
	We thank Ana B\u{a}libanu, Damien Calaque, Zo\"ik Dubois, and Tom Gannon for highly useful discussions. P.C. and M.M were partially supported by the Simons Foundation Grant MPS-TSM-00002292 and NSERC Discovery Grant RGPIN-2023-04587, respectively.

%
%
%
	\section{1-shifted Lagrangian correspondences}\label{Section: 1-shifted}
	
	In this section, we review background material on quasi-symplectic groupoids, shifted Lagrangians, and Morita equivalence in a differential-geometric context, following \cite{ptvv:13,cal:14,get:14,cal:21,cue-zhu:23,may:23}. We also provide proofs of certain basic, technical facts which we have not been able to find in the literature. 
	
	We work in the smooth category throughout this section. If one makes the obvious adjustments to terminology (e.g. replacing \textit{real} and \textit{smooth} with \textit{complex} and \textit{holomorphic}, respectively), all results hold in the holomorphic category as well.
	
	\subsection{Notation and conventions on Lie groupoids}\label{xizv8xjy}
	 Given a Lie groupoid $\G$, the manifolds of arrows and objects are denoted $\G\arr$ and $\G\obj$, respectively. One has source and target maps $(\sss, \ttt) : \G\arr \tto \G\obj$, multiplication $\mmm : \G\arr \fp{\sss}{\ttt} \G\arr \longrightarrow \G\arr$, inversion $\iii : \G\arr \longrightarrow \G\arr$, and the identity section $\uuu : \G\obj \longrightarrow \G\arr$. The Lie algebroid of $\G$ is $A_\G \coloneqq \ker \sss_*$, together with $\rho_\G \coloneqq \ttt_*\big\vert_{A_\G}$ as its anchor map.
	
	Consider a Lie groupoid $\G$ and smooth map $\mu:N\longrightarrow\G\obj$. The \defn{pullback} of $\G$ by $\mu$ is denoted $\mu^*\G$; it is a groupoid with arrows $(\mu^*\G)\arr = N \fp{\mu}{\sss} \G \fp{\ttt}{\mu} N$ and objects $(\mu^*\G)\obj = N$. A sufficient condition for $\mu^*\G$ to be a Lie groupoid is for $\ttt \circ \pr_{\G\arr} : N \fp{\mu}{\sss} \G\arr \to \G\obj$ to be a surjective submersion, where $\pr_{\G\arr}:  N \fp{\mu}{\sss} \G\arr\longrightarrow\G\arr$ is the canonical map.

	The \defn{restriction} of a groupoid $\G$ to a subset $S \s \G\obj$ is the groupoid $\G|_S \coloneqq \sss^{-1}(S) \cap \ttt^{-1}(S)$, i.e.\ the pullback of $\G$ by the inclusion $S \too \G\obj$.

	For an action of a Lie groupoid $\G$ on a manifold $M$, the \defn{action groupoid} is denoted $\G \ltimes M$.
	It has source map $\sss(g, p) = p$ and target map $\ttt(g, p) = g \cdot p$.
	
	\subsection{Quasi-symplectic groupoids}
	Quasi-symplectic groupoids feature prominently in this manuscript. For the sake of completeness and self-containment, we recall their definition below.
	
	\begin{definition}[Xu \cite{xu:04}, Bursztyn--Crainic--Weinstein--Zhu \cite{bur-cra-wei-zhu:04}]
		A \defn{quasi-symplectic groupoid} is a triple $(\G, \omega, \phi)$ of a Lie groupoid $\G$, 2-form $\omega$ on $\G\arr$, and a 3-form $\phi$ on $\G\obj$, such that $\mathrm{d}\omega = \sss^*\phi - \ttt^*\phi$, $\dim \G\arr = 2 \dim \G\obj$, and $\ker \omega_x \cap \ker \sss_* \cap \ker \ttt_* = 0$ for all $x \in \G\obj$.
		The \defn{IM-form} of $(\G, \omega, \phi)$ is the map $$\sigma_\omega : A_{\G} \longrightarrow T^*\G\obj,\quad a\mapsto\uuu^*(i_a\omega).$$
We write $\G^-$ for the quasi-symplectic groupoid $(\G, -\omega, -\phi)$.
A \defn{symplectic groupoid} is a quasi-symplectic groupoid whose 2-form $\omega$ is non-degenerate and whose 3-form $\phi$ is trivial.
	\end{definition}

The central purpose of quasi-symplectic groupoids is that they are presentations of 1-shifted symplectic stacks \cite{get:14,cal:21,ptvv:13}.

	\subsection{$1$-shifted Lagrangians}
	Let us recall the following crucial definition.
	
	\begin{definition}[Pantev--To\"en--Vaqui\'e--Vezzosi \cite{ptvv:13}] \label{yhan52mv}
		A \defn{1-shifted Lagrangian} on a quasi-symplectic groupoid $(\G, \omega, \phi)$ is a Lie groupoid morphism $\mu : \L \longrightarrow \G$, together with a 2-form $\gamma$ on $\L\obj$, such that the following conditions are satisfied:
		\begin{enumerate}[label={(\roman*)}]
			\item \label{n6alyrle}
			$\mu^*\omega = \ttt^*\gamma - \sss^*\gamma$;
			\item \label{kopgj8mq}
			$\mu^*\phi = -\mathrm{d}\gamma$;
			\item \label{dhs45mwu}
			the map
			\begin{align*}
				A_\L &\too \{(v, a) \in T\L\obj \oplus \mu^*A_\G : \mu_*(v) = \rho_\G(a) \text{ and } i_v\gamma = \mu^* \sigma_\omega(a)\} \\
				b &\mtoo (\rho_\L(b), \mu_*(a))
			\end{align*}
			is an isomorphism.
		\end{enumerate}
		See \cite[Lemma 3.3]{may:23} for the equivalence between this definition and the original one in \cite{ptvv:13}.
	\end{definition}

\begin{example}\label{cb2gi3le}
The notion of 1-shifted Lagrangians is closely related to that of Hamiltonian $\G$-spaces, as defined by Xu \cite{xu:04}. In more detail, let a quasi-symplectic groupoid $\G$ act on a manifold $M$. A 1-shifted Lagrangian structure on the projection $\G \ltimes M \to \G$ is precisely a 2-form on $M$ for which $M$ is a Hamiltonian $\G$-space.
\end{example}

	Given two quasi-symplectic groupoids $\G_1$ and $\G_2$, a \defn{1-shifted Lagrangian correspondence} from $\G_1$ to $\G_2$ is a 1-shifted Lagrangian $\L \longrightarrow \G_1 \times \G_2^-$.
	We usually denote a 1-shifted Lagrangian correspondence by a span
	\[
	\begin{tikzcd}[row sep={1.5em,between origins},column sep={3em,between origins}]
		& \L \arrow{dl} \arrow{dr} & \\
		\G_1 & & \G_2.
	\end{tikzcd}
	\]
	Two 1-shifted Lagrangian correspondences
	\begin{equation}
		\begin{tikzcd}[row sep={1.5em,between origins},column sep={3em,between origins}]
			& \L_1 \arrow{dl} \arrow{dr} & & \L_2 \arrow{dl} \arrow{dr} & \\
			\G_1 & & \G_2 & & \G_3
		\end{tikzcd}
	\end{equation}
	are called \defn{transverse} if their \defn{homotopy fibre product} $\L_1 \htimes_{\G_2} \L_2$ (see, e.g.\ \cite[Subsection 5.3]{moe-mrc:03} or \cite[Subsection 4.2]{may:23}) is transverse, i.e.\ the two maps
	\[
	\begin{tikzcd}[row sep={1em},column sep={2em}]
		\L_1\obj \times \L_2\obj \arrow{dr} & & \G_2\arr \arrow[swap]{dl}{(\sss, \ttt)} \\
		& \G_2\obj \times \G_2\obj &
	\end{tikzcd}
	\]
	are transverse.
	In this case,
	\begin{equation}\label{0dmgaq1o}
		\begin{tikzcd}[row sep={2em,between origins},column sep={4em,between origins}]
			& \L_1 \htimes_{\G_2} \L_2 \arrow{dl} \arrow{dr} & \\
			\G_1 & & \G_3.
		\end{tikzcd}
	\end{equation}
	is a 1-shifted Lagrangian correspondence \cite[Theorem 4.10]{may:23}; it is called the \defn{composition} of $\L_1$ and $\L_2$, and denoted $\L_2 \circ \L_1$.
	More precisely, if $\gamma_i$ is the 1-shifted Lagrangian structure on $\L_i$ for $i = 1, 2$, then the 1-shifted Lagrangian structure on \eqref{0dmgaq1o} is given by $\pr_{\L_1}^*\gamma_1 + \pr_{\L_2}^*\gamma_2 - \pr_{\G_2\arr}^*\omega_2$, where $\pr_{\L_i} : \L_1 \htimes_{\G_2} \L_2 \too \L_i$ and
	\[
	\pr_{\G_2\arr} : (\L_1 \htimes_{\G_2} \L_2)\obj = \L_1\obj \fp{\mu_1}{\sss} \G_2\arr \fp{\ttt}{\mu_2} \L_2\obj \too \G_2\arr
	\]
	are the natural projections.

	\subsection{Morita equivalences}\label{3y574s4t}
	A morphism of Lie groupoids $\varphi : \H \longrightarrow \G$ is called a \defn{Morita morphism} \cite{beh-xu:03,beh:04,lau-sti-xu:09,beh-xu:11} (a.k.a.\ \emph{surjective equivalence} \cite{hoy:13} or \emph{hypercover} \cite{zhu:09,cue-zhu:23}) if the underlying map on objects $\varphi\obj : \H\obj \longrightarrow \G\obj$ is a surjective submersion, and the induced morphism $\H \longrightarrow (\varphi\obj)^*\G$ is a Lie groupoid isomorphism.
	A \defn{Morita equivalence} between two Lie groupoids $\G_1$ and $\G_2$ consists of a Lie groupoid $\H$ and Morita morphisms $\G_1 \longleftarrow \H \longrightarrow \G_2$.
	Morita equivalence is an equivalence relation on the set of Lie groupoids.
	
	A weaker notion is that of an \defn{essential equivalence}, consisting of a Lie groupoid morphism $\varphi : \H \too \G$ such that $\ttt \circ \pr_{\G\arr} : \H\obj \fp{\varphi}{\sss} \G\arr \to \G\obj$ is a surjective submersion and $\H \to (\varphi\obj)^*\G$ is a Lie groupoid isomorphism.
	Two Lie groupoids $\G_1$ and $\G_2$ are Morita equivalent if and only if there is a Lie groupoid $\H$ together with essential equivalences $\G_1 \longleftarrow \H \longrightarrow \G_2$ (see e.g.\ \cite[Remark 5.2]{may:23}).
	Hence, Morita morphisms and essential equivalences induce the same equivalence relation on the set of Lie groupoids and can be used interchangeably.
	We use mainly Morita morphisms in this paper, following \cite{may:23}, but both approaches are equivalent.

	The notion of Morita equivalence gives rise to the notion of a \defn{symplectic Morita equivalence} between two quasi-symplectic groupoids $(\G_1, \omega_2, \phi_1)$ and $(\G_2, \omega_2, \phi_2)$, i.e.\ a Morita equivalence $\G_1 \longleftarrow \H \longrightarrow \G_2$ that is also a 1-shifted Lagrangian correspondence.
	Two quasi-symplectic groupoids are symplectically Morita equivalent if and only if they present isomorphic 1-shifted symplectic stacks \cite{get:14,cal:21,cue-zhu:23,may:23}.
	By \cite[Lemma 6.3]{may:23}, the non-degeneracy condition \ref{dhs45mwu} in Definition \ref{yhan52mv} is automatic: a Morita equivalence $\G_1 \overset{\varphi_1}{\longleftarrow} \H \overset{\varphi_2}{\longrightarrow} \G_2$ is symplectic if and only if there is a 2-form $\gamma$ on $\H\obj$ such that $\varphi_1^*\omega_1 - \varphi_2^*\omega_2 = \ttt^*\gamma - \sss^*\gamma$ and $\varphi_1^*\phi_1 - \varphi_2^*\phi_2 = -\mathrm{d}\gamma$.
	The following example therefore follows automatically.

	\begin{example}\label{c00ufvui}
	For every quasi-symplectic groupoid $\G$, the identity maps $\G \longleftarrow \G \longrightarrow \G$ and trivial 2-form constitute a symplectic Morita equivalence.
	\end{example}

	\begin{remark}\label{aoe5a1sn}
	An alternative but equivalent \cite{cue-zhu:23} approach to symplectic Morita equivalence is via Hamiltonian bibundles \cite{xu:04,ale-mei:22}, i.e.\ two quasi-symplectic groupoids $\G_1$ and $\G_2$ are symplectically Morita equivalent if and only if there is manifold $M$ endowed with a 2-form and commuting Hamiltonian actions of $\G_1$ and $\G_2^-$ such that $M \to \G_2\obj$ is a principal $\G_1$-bundle and $M \to \G_1\obj$ is a principal $\G_2$-bundle.
	Such a bibundle gives rise to a symplectic Morita equivalence via the action groupoid $(\G_1 \times \G_2) \ltimes M$ together with the two projections to $\G_1$ and $\G_2$.
	\end{remark}
	
	A \defn{Lagrangian Morita equivalence} between two 1-shifted Lagrangians $(\L_1, \gamma_1) \longrightarrow (\G_1, \omega_1, \phi_1)$ and $(\L_2, \gamma_2) \longrightarrow (\G_2, \omega_2, \phi_2)$ is a 2-commutative diagram
	\begin{equation}
		\begin{tikzcd}[column sep={4em,between origins},row sep={2em,between origins}]
			& \M \arrow[swap]{dl}{\psi_1} \arrow{dd}{\nu} \arrow{dr}{\psi_2} & \\
			\L_1 \arrow[swap]{dd}{\mu_1} \arrow[Rightarrow,shorten=6pt,swap]{dr}{\theta_1} &  & \L_2 \arrow{dd}{\mu_2} \arrow[Rightarrow,shorten=6pt]{dl}{\theta_2} \\
			& \H \arrow{dl}{\varphi_1} \arrow[swap]{dr}{\varphi_2} & \\
			\G_1 & & \G_2,
		\end{tikzcd}
	\end{equation}
	such that $(\varphi_1, \varphi_2)$ is a symplectic Morita equivalence with respect to some 2-form $\delta$ on $\H\obj$, $(\psi_1, \psi_2)$ is a Morita equivalence of Lie groupoids, and
	\[
	\psi_1^*\gamma_1 - \psi_2^*\gamma_2 = \nu^*\delta - \theta_1^*\omega_1 + \theta_2^*\omega_2.
	\]
	In this case, we say that $\L_1$ and $\L_2$ are \defn{Lagrangially Morita equivalent}.
	This happens if and only if $\L_1$ and $\L_2$ present isomorphic 1-shifted Lagrangians on the 1-shifted symplectic stacks presented by $\G_1$ and $\G_2$ \cite[Section 10]{may:23}.
	In the special case where $(\G_1, \omega_1, \phi_1) = (\G_2, \omega_2, \phi_2)$, we call this a \defn{weak equivalence} of 1-shifted Lagrangians. This amounts to declaring two $1$-shifted Lagrangians $(\L_1, \gamma_1) \longrightarrow (\G, \omega, \phi)$ and $(\L_2, \gamma_2) \longrightarrow (\G, \omega, \phi)$ to be weakly equivalent if there a 2-commutative diagram
	\begin{equation}\label{msge1bqw}
		\begin{tikzcd}[row sep={3em,between origins},column sep={3em,between origins}]
			& \M \arrow[swap]{dl}{\psi_1} \arrow{dr}{\psi_2} & \\
			\L_1 \arrow[swap]{dr}{\mu_1} \arrow[shorten=14pt,Rightarrow]{rr}{\theta} & & \L_2 \arrow{dl}{\mu_2} \\
			& \G &
		\end{tikzcd}
	\end{equation}
	such that $(\psi_1, \psi_2)$ is a Morita equivalence of Lie groupoids and $\psi_2^*\gamma_2 - \psi_1^*\gamma_1 = \theta^*\omega$.

\begin{example}\label{gesm9jcm}
Recall that the action of $\G \times \G^-$ on $\G$ by left and right multiplications is Hamiltonian in the sense of Xu \cite{xu:04}.
It follows from Example \ref{cb2gi3le} that $(\G \times \G) \ltimes \G$ is a symplectic Morita equivalence from $\G$ to $\G$.
Under the equivalence between Hamiltonian bibundles and symplectic Morita equivalences (Remark \ref{aoe5a1sn}), $\G \longleftarrow (\G \times \G) \ltimes \G \longrightarrow \G$ is weakly equivalent to the 1-shifted Lagrangian correspondence $\G \longleftarrow \G \longrightarrow \G$ of Example \ref{c00ufvui}.
An explicit essential equivalence can be obtained by the morphism $\G \too (\G \times \G) \ltimes \G$, $g \mtoo ((g, g), \uuu_{\sss(g)})$.
\end{example}

If $\G_1$ and $\G_2$ are symplectically Morita equivalent quasi-symplectic groupoids, then for every 1-shifted Lagrangian $\L_1 \to G_1$ there is a 1-shifted Lagrangian $\L_2 \to \G_2$ together with a Lagrangian Morita equivalence between them \cite[Section 7]{may:23}.
Moreover, $\L_2 \to \G_2$ is unique up to weak equivalences.
We call this process of transferring 1-shifted Lagrangians from $\G_1$ to $\G_2$ \defn{Morita transfer}.
	
The next three results are important for the construction of the 1-shifted Weinstein symplectic category in the next section.

	\begin{lemma}\label{32j00jes}
		Consider a 2-commutative diagram of Lie groupoid morphisms
		\[
		\begin{tikzcd}[row sep={1.5em,between origins},column sep={3em,between origins}]
			\M_1 \arrow[swap]{dd} \arrow{dr} & & \M_2 \arrow[swap]{dl} \arrow{dd} \\
			& \H \arrow{dd}  &  \\
			\L_1 \arrow[swap]{dr} & & \L_2 \arrow{dl} \\
			& \G &
		\end{tikzcd},
		\]
		where the vertical arrows are Morita morphisms.
		Then $\L_1 \htimes_\G \L_2$ is transverse if and only if $\M_1 \htimes_\H \M_2$ is transverse.
		In this case, the induced map $\M_1 \htimes_\H \M_2 \longrightarrow \L_1 \htimes_\G \L_2$ is a Morita morphism.
	\end{lemma}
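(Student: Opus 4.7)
My plan is to reduce everything to the fact that Morita morphisms are modelled, on arrows, by pullback along the surjective submersion on objects. Concretely, if $\varphi : \M \to \L$ is a Morita morphism, then
\[
\M\arr \;\cong\; \M\obj \fp{\varphi}{\sss} \L\arr \fp{\ttt}{\varphi} \M\obj
\]
and this lets me rewrite the objects of the homotopy fibre product as an iterated pullback. Applying this to $\varphi : \H \to \G$ gives $\H\arr = \H\obj \fp{}{\sss} \G\arr \fp{\ttt}{} \H\obj$, so that
\[
(\M_1 \htimes_\H \M_2)\obj \;=\; \M_1\obj \fp{\nu_1}{\sss} \H\arr \fp{\ttt}{\nu_2} \M_2\obj
\;\cong\; \M_1\obj \times_{\L_1\obj} (\L_1 \htimes_\G \L_2)\obj \times_{\L_2\obj} \M_2\obj,
\]
where I use the Morita morphisms $\M_i\obj \to \L_i\obj$ and the 2-isomorphisms $\theta_i$ to identify the appropriate legs. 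In particular, the induced map on objects
\[
p : (\M_1 \htimes_\H \M_2)\obj \too (\L_1 \htimes_\G \L_2)\obj
\]
is a surjective submersion as a pullback of the surjective submersions $\M_i\obj \to \L_i\obj$.

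Next I would address the equivalence of the two transversality conditions. Recall that $\L_1 \htimes_\G \L_2$ is transverse precisely when the map
\[
\L_1\obj \times \L_2\obj \times \G\arr \too (\G\obj \times \G\obj) \times (\G\obj \times \G\obj),\qquad (x_1, x_2, g) \mto \bigl((\mu_1(x_1), \mu_2(x_2)), (\sss(g), \ttt(g))\bigr),
\]
is transverse to the diagonal; equivalently, the differential $(\mu_1, \mu_2)_* - (\sss, \ttt)_*$ is surjective at every matching point. The analogous statement applies to $\M_1 \htimes_\H \M_2$. Using the identification $\H\arr = \H\obj \fp{}{\sss} \G\arr \fp{\ttt}{} \H\obj$ and the fact that $\M_i\obj \to \L_i\obj$ and $\H\obj \to \G\obj$ are surjective submersions, I can chase the tangent vectors through the pullback squares. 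Surjectivity of the differential in the $\M_i, \H$ picture is then seen to be equivalent to surjectivity in the $\L_i, \G$ picture, because any tangent vector upstairs factors through an available lift downstairs, and vice versa via the surjective submersion.

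For the final claim that $\M_1 \htimes_\H \M_2 \to \L_1 \htimes_\G \L_2$ is Morita, I would verify the two defining properties of a Morita morphism. The map on objects is the surjective submersion $p$ above. It remains to check that the induced map on arrows realizes $\M_1 \htimes_\H \M_2$ as the pullback groupoid $p^*(\L_1 \htimes_\G \L_2)$. Since the arrows of a homotopy fibre product $\L_1 \htimes_\G \L_2$ can be written as a fibre product of $\L_1\arr$, $\G\arr$, and $\L_2\arr$ over suitable maps built from $\sss, \ttt, \mu_1, \mu_2$, and since each of $\M_i$ and $\H$ is obtained from $\L_i$ resp.\ $\G$ by an identical pullback formula on arrows, the two groupoid structures on the pullback of objects agree after a straightforward fibre-product manipulation.

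I expect the main obstacle to be bookkeeping with the 2-commutativity of the given diagram: the natural transformations between $\varphi \circ \nu_i$ and $\mu_i \circ \varphi_i$ need to be used systematically to identify $(\M_1 \htimes_\H \M_2)\obj$ with the claimed iterated pullback, and to show the compatibility of arrows. Once one sets up the correct cleaving of these 2-isomorphisms, the arguments for transversality and the Morita property reduce to standard transversality and pullback calculations for surjective submersions.
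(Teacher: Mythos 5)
The parts of your plan that concern the Morita property are fine: granting that both homotopy fibre products are Lie groupoids, the $\theta$-twisted identification $(\M_1 \htimes_\H \M_2)\obj \cong \M_1\obj \times_{\L_1\obj} (\L_1 \htimes_\G \L_2)\obj \times_{\L_2\obj} \M_2\obj$, given by $(y_1,g,y_2)\mapsto\bigl(y_1,\;\theta_2(y_2)\,g\,\theta_1(y_1)^{-1},\;y_2\bigr)$, does exhibit the map on objects as a pullback of the surjective submersions $\M_i\obj\to\L_i\obj$, and the identification of arrows is then routine; this is essentially the last portion of the paper's proof (which instead exhibits local sections). The genuine gap is in the central claim, namely the equivalence of the two transversality conditions. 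Transversality is surjectivity of differentials at matching points, and an identification of object spaces as sets, or even as smooth manifolds of the expected dimension, does not imply it (two tangent curves in the plane meet in a smooth set of the expected dimension $0$ without being transverse). So ``chasing tangent vectors through the pullback squares'' has to be an actual construction, and your proposal does not supply one.

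The specific difficulty you gloss over is the shift of foot-points caused by the $2$-morphisms: at an upstairs matching point $(y_1,h,y_2)$ the relevant tangent spaces of $\G\obj$ sit at $\varphi(\nu_i(y_i))=\sss(\theta_i(y_i))$, whereas at the corresponding downstairs point $\bigl(\psi_1(y_1),\,\theta_2(y_2)\varphi(h)\theta_1(y_1)^{-1},\,\psi_2(y_2)\bigr)$ they sit at $\mu_i(\psi_i(y_i))=\ttt(\theta_i(y_i))$. One must transport vectors along the arrows $\theta_i(y_i)$ and, going back up, produce a tangent vector of $\H\arr\cong\H\obj\fp{\varphi}{\sss}\G\arr\fp{\ttt}{\varphi}\H\obj$ whose source and target components hit prescribed values simultaneously; ``any tangent vector upstairs factors through an available lift downstairs, and vice versa'' does not produce it. This is exactly where the paper spends almost all of its effort: it fixes an Ehresmann connection on $\G$ and uses the associated quasi-action $\Ad$, the basic curvature $K$, and the derivatives $\dot\theta_i$ of the natural transformations to write the required lifts explicitly, curvature corrections included. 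The gap is fillable, and in fact more cheaply than in the paper: one can first use $\H\arr\cong\H\obj\fp{\varphi}{\sss}\G\arr\fp{\ttt}{\varphi}\H\obj$ to reduce the upstairs condition to transversality of $\varphi\nu_1\times\varphi\nu_2$ with $(\sss,\ttt)$ in $\G$, then trade $\varphi\nu_i$ for $\mu_i\psi_i$ one factor at a time via the translation diffeomorphisms $(y,g)\mapsto(y,\,g\,\theta_1(y)^{-1})$ of $\M_1\obj\fp{\varphi\nu_1}{\sss}\G\arr$ and $(y,g)\mapsto(y,\,\theta_2(y)\,g)$ of $\M_2\obj\fp{\varphi\nu_2}{\ttt}\G\arr$ (testing transversality against one factor at a time is legitimate because $\sss$ and $\ttt$ are submersions), and finally discard the surjective submersions $\psi_i$; but some such mechanism must be made explicit, and your write-up contains none.
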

	
	\begin{proof}
		Introduce the following labels:
		\[
		\begin{tikzcd}[row sep={2em,between origins},column sep={4em,between origins}]
			\M_1 \arrow[swap]{dd}{\psi_1} \arrow{dr}{\nu_1} & & \M_2 \arrow[swap]{dl}{\nu_2} \arrow{dd}{\psi_2} \\
			& \H \arrow{dd}{\varphi} \arrow[Rightarrow, shorten=8pt,swap]{dl}{\theta_1} \arrow[Rightarrow, shorten=8pt]{dr}{\theta_2} &  \\
			\L_1 \arrow[swap]{dr}{\mu_1} & & \L_2 \arrow{dl}{\mu_2} \\
			& \G &
		\end{tikzcd},
		\]
		where $\theta_i$ is a natural transformation from $\varphi \nu_i$ to $\mu_i \psi_i$, i.e.\
		\[
		\sss \theta_i = \varphi \nu_i, \quad \ttt \theta_i = \mu_i \psi_i, \quad\text{and}\quad \theta_i(\ttt(g_i)) \cdot \varphi(\nu_i(g_i)) = \mu_i(\psi_i(g_i)) \cdot \theta_i(\sss(g_i))
		\]
		for all $g_i \in \M_i$ and $i=1,2$.
		Choose an Ehresmann connection $\tau$ on $\G$ \cite[Definition 2.8]{aba-cra:13}; this is a right splitting of the short exact sequence
		\begin{equation}\label{pu32toow}
			\begin{tikzcd}[column sep=3em]
				0 \arrow{r} & \ttt^* A_\G \arrow{r}{R} & T\G\arr \arrow{r}{\sss_*} & \sss^*T\G\obj \arrow{r} & 0,
			\end{tikzcd}
		\end{equation}
		where $R$ is right translation.
		Let $\Ad$ be the corresponding adjoint representation up to homotopy \cite[Definition 2.11]{aba-cra:13}; see also \cite[Subsection 7.2]{may:23} for an overview in the same notation as this proof. We have the quasi-action of $\G$ on $T\G\obj$ and $A_\G$ given by 
		\begin{align*}
			\Ad_g &: T_{\sss(g)}\G\obj \too T_{\ttt(g)}\G\obj, \quad \Ad_g(v) = \ttt_*(\tau_g(v)) \\
			\Ad_g &: (A_\G)_{\sss(g)} \too (A_\G)_{\ttt(g)}, \quad \Ad_g(a) = \check{\tau}(a^L_g)
		\end{align*}
		for all $g \in \G$, where $\check{\tau}$ is the left splitting of \eqref{pu32toow} corresponding to $\tau$.
		As in \cite[Subsection 7.2]{may:23}, we denote the basic curvature of $\tau$ by $K \in \Gamma(\G^2; \Hom(\sss^*T\G\obj, \ttt^*A_\G))$.
		We also set
		\[
		\dot{\theta}_i \coloneqq \check{\tau} \circ \theta_{i*} : T\M_i\obj \too \psi_2^*\mu_2^*A_\G,
		\]
		for $i=1,2$, so that the $\dot{\theta}_i$ provide chain homotopies for the maps of chain complexes from $A_{\M_i} \longrightarrow T\M_i\obj$ to $A_{\G} \longrightarrow T\G\obj$ \cite[Proposition 7.4]{may:23}.
		
		Suppose that $\M_1 \htimes_\H \M_2$ is transverse.
		Let $(x_1, g, x_2) \in (\L_1 \htimes_\G \L_2)\obj$, i.e.\ $(\mu_1(x_1), \mu_2(x_2)) = (\sss(g), \ttt(g))$.
		Take $(u_1, u_2) \in T_{\sss(g)}\G\obj \times T_{\ttt(g)}\G\obj$.
		Let $y_i \in \M_i\obj$ be such that $\psi_i(y_i) = x_i$.
		Then $\sss(g) = \mu_1(x_1) = \mu_1(\psi_1(y_1)) = \ttt(\theta_1(y_1))$ and $\ttt(g) = \mu_2(x_2) = \mu_2(\psi_2(y_2)) = \ttt(\theta_2(y_2))$.
		Moreover, $\sss(\theta_1(y_1)) = \varphi(\nu_1(y_1))$ and $\sss(\theta_2(y_2)) = \varphi(\nu_2(y_2))$.
		It follows that
		\[
		(\nu_1(y_1), \theta_2(y_2)^{-1} g \theta_1(y_1), \nu_2(y_2))
		\in
		(\varphi\obj)^*\G.
		\]
		Since $\H \cong (\varphi\obj)^*\G$, there is a unique $h \in \H$ such that $\sss(h) = \nu_1(y_1)$, $\ttt(h) = \nu_2(y_2)$, and $\varphi(h) = \theta_2(y_2)^{-1} g \theta_1(y_1)$.
		Noting that $u_1 \in T_{\sss(g)}\G\obj = T_{\ttt(\theta_1(y_1))}\G\obj$, there exists a unique $v_1 \in T_{\sss(\theta_1(y_1))}\G\obj = T_{\sss(\varphi(h))} \G\obj$ such that $\Ad_{\theta_1(y_1)}(v_1) = u_1$.
		A similar argument reveals the existence of a unique $v_2 \in T_{\sss(\theta_2(y_2))}\G\obj = T_{\ttt(\varphi(h))}\G\obj$ such that $\Ad_{\theta_2(y_2)}(v_2) = u_2$.
		We also have $\varphi(\nu_i(y_i)) = \sss(\theta_i(y_i))$, and know $\varphi$ to be a submersion. One therefore has $w_i \in T_{\nu_i(y_i)}\H\obj$ such that $\varphi_*(w_i) = v_i$.
		At the same time, $(y_1, h, y_2) \in (\M_1 \htimes_\H \M_2)\obj$ and $(w_1, w_2) \in T_{\sss(h)}\H\obj \times T_{\ttt(h)}\H\obj$.
		Since $\M_1 \htimes_\H \M_2$ is transverse, there exist $\delta_i \in T_{y_i}\M_i$ and $\gamma \in T_h\H\arr$ such that
		\[
		(w_1, w_2) = (\nu_{1*}(\delta_1), \nu_{2*}(\delta_2)) + (\sss_*(\gamma), \ttt_*(\gamma)).
		\]
		Let $\tilde{g} \coloneqq \theta_2(y_2)^{-1} g \theta_1(y_1)$.
		Using the fact that $T_{\tilde{g}}\G\arr = R_{\tilde{g}}((A_\G)_{\ttt(\tilde{g})}) \oplus \tau_{\tilde{g}}(T_{\sss(\tilde{g})}\G\obj)$, we can write 
		\begin{equation}\label{2wcbya94}
			\varphi_* \gamma = a^R_{\tilde{g}} + \tau_{\tilde{g}}(b)
		\end{equation}
		for $a \in (A_\G)_{\sss\theta_2(y_2)}$ and $b \in T_{\sss\theta_1(y_1)}\G\obj$.
		Applying $\sss_*$ to both sides of \eqref{2wcbya94} yields $\sss_* \varphi_* (\gamma) = b$.
		It follows that
		\begin{equation}\label{js1iulld}
			\varphi_* (\gamma) = a^R_{\tilde{g}} + \tau_{\tilde{g}} \sss_* \varphi_*(\gamma).
		\end{equation}
		Furthermore, applying $\ttt_*$ to both sides of \eqref{js1iulld} gives
		\begin{equation}\label{l35gw3ng}
			\rho(a) = \ttt_* \varphi_* (\gamma) - \Ad_{\tilde{g}} \sss_* \varphi_*(\gamma).
		\end{equation}

		We claim that
		\[
		(u_1, u_2) = (\mu_{1*} (\tilde{\delta}_1) + \sss_* (\tilde{\gamma}), \mu_{2*} (\tilde{\delta}_2) + \ttt_* (\tilde{\gamma})),
		\]
		where
		\begin{align*}
			\tilde{\gamma} &\coloneqq \tau_g \Ad_{\theta_1(y_1)} \sss \varphi_*(\gamma) + (\dot{\theta}_1(\delta_1))^L_g + (\Ad_{\theta_2(y_2)}(a) -\dot{\theta}_2 (\delta_2) - K(g, \theta_1(y_1))(\sss \varphi_*(\gamma)) + K(\theta_2(y_2), \tilde{g})(\sss \varphi_* (\gamma)))^R_g \\
			\tilde{\delta}_1 &\coloneqq \psi_{1*}(\delta_1) \\
			\tilde{\delta}_2 &\coloneqq \psi_{2*}(\delta_2).
		\end{align*}
		To this end, \cite[Proposition 7.4]{may:23} implies that
		\begin{align*}
			\mu_{1*}(\tilde{\delta}_1) + \sss_*(\tilde{\gamma})
			&=
			\mu_1\psi_{1*}(\delta_1) + \Ad_{\theta_1(y_1)} \sss \varphi_*(\gamma) - \rho\dot{\theta}_1(\delta_1) \\
			&= \Ad_{\theta_1(y_1)}\varphi_*\nu_{1*}(\delta_1) + \Ad_{\theta_1(y_1)} \sss \varphi_*(\gamma) \\
			&= \Ad_{\theta_1(y_1)} \varphi_*(w_1) \\
			&= u_1.
		\end{align*}
		At the same time, \cite[Eq.\ (7.9) and Proposition 7.4]{may:23}, \eqref{l35gw3ng}, and the equivariance of $\rho$ tell us that
		\begin{align*}
			&\mu_{2*}(\tilde{\delta}_2) + \ttt_*(\tilde{\gamma}) \\
			&=
			\mu_2 \psi_{2*} (\delta_2) + \Ad_g \Ad_{\theta_1(y_1)} \sss_* \varphi_*(\gamma) + \rho(\Ad_{\theta_2(y_2)}(a) - \dot{\theta}_2 (\delta_2) - K(g, \theta_1(y_1))(\sss_* \varphi_* (\gamma)) + K(\theta_2(y_2), \tilde{g})(\sss_* \varphi_* (\gamma))) \\
			&= \Ad_{\theta_2(y_2)} \varphi_* \nu_{2*} (\delta_2) + \Ad_g \Ad_{\theta_1(y_1)} \sss_* \varphi_*(\gamma) + \Ad_{\theta_2(y_2)} (\ttt_*\varphi_*(\gamma) - \Ad_{\tilde{g}} \sss_* \varphi_*(\gamma)) \\
			&\qquad - \rho(K(g, \theta_1(y_1))(\sss_* \varphi_* (\gamma))) + \rho(K(\theta_2(y_2), \tilde{g})(\sss_* \varphi_* (\gamma))) \\
			&= 
			\Ad_{\theta_2(y_2)}\varphi_*\nu_{2*}(\delta_2) + \Ad_{\theta_2(y_2)} \ttt_* \varphi_*(\gamma) \\
			&= u_2.
		\end{align*}
		It follows that $\L_1 \htimes_\G \L_2$ is transverse.
		
		Suppose now that $\L_1 \htimes_\G \L_2$ is transverse.
		Let $(y_1, h, y_2) \in (\M_1 \htimes_\H \M_2)\obj$, i.e.\ $(\nu_1(y_1), \nu_2(y_2)) = (\sss(h), \ttt(h))$.
		Let $(v_1, v_2) \in T_{\sss(h)}\H\obj \times T_{\ttt(h)}\H\obj$.
		Note that for all $(u_1, u_2) \in \ker(\varphi_*)_{\sss(h)} \times \ker(\varphi_*)_{\ttt(h)}$, we have $(u_1, u_2) = (\sss_*(w), \ttt_*(w))$, where $w = (u_1, 0, u_2) \in T_h\H\arr \cong T_{\sss(h)}\H\obj \times_{T\G\obj} T_{\varphi(h)}\G\arr \times_{T\G\obj} T_{\ttt(h)}\H\obj$.
		It therefore suffices to show that
		\[
		\varphi_* (v_1) = \varphi_*(\nu_{1*}(u_1) + \sss_*(w))
		\quad\text{and}\quad
		\varphi_* (v_2) = \varphi_*(\nu_{2*}(u_2) + \ttt_*(w))
		\]
		for some $u_i \in T_{y_i}\M_i\obj$ and $w \in T_h\H\arr$.
		In other words, it suffices to show that
		\[
		\varphi_*(v_1 - \nu_{1*}(u_1)) = \sss_* \gamma
		\quad\text{and}\quad
		\varphi_*(v_2 - \nu_{2*}(u_2)) = \ttt_* \gamma,
		\]
		for some $u_i \in T_{y_i}\M_i\obj$ and $\gamma \in T_{\varphi(h)}\G\arr$.
		
		Note that $\ttt(\varphi(h)) = \varphi(\nu_2(y_2)) = \sss(\theta_2(y_2))$ and $\sss(\varphi(h)) = \varphi(\nu_1(y_1)) = \sss(\theta_1(y_1))$, so that $g \coloneqq \theta_2(y_2) \cdot \varphi(h) \cdot \theta_1(y_1)^{-1} \in \G\arr$ is well-defined.
		Let $x_1 \coloneqq \psi_1(y_1)$ and $x_2 \coloneqq \psi_2(y_2)$, noting that $(\mu_1(x_1), \mu_2(x_2)) = (\sss(g), \ttt(g))$.
		We then have $(\varphi_*(v_1), \varphi_*(v_2)) \in T_{\sss\theta_1(y_1)}\G\obj \times T_{\sss\theta_2(y_2)}\G\obj$. It follows that $(\Ad_{\theta_1(y_1)}\varphi_*(v_1), \Ad_{\theta_2(y_2)}\varphi_*(v_2)) \in T_{\sss(g)}\G\obj \times T_{\ttt(g)}\G\obj$.
		Since $\L_1 \htimes_\G \L_2$ is transverse, there exist $\tilde{u}_1 \in T_{x_1}\L_1\obj$, $\tilde{u}_2 \in T_{x_2}\L_2\obj$, and $w \in T_g\G\arr$ such that
		\[
		(\Ad_{\theta_1(y_1)}\varphi_*(v_1), \Ad_{\theta_2(y_2)}\varphi_*(v_2))
		=
		(\mu_{1*}(\tilde{u}_1) + \sss_*(w), \mu_{2*}(\tilde{u}_2) + \ttt_*(w)).
		\]
		Choose $u_i \in T_{y_i}\M_i\obj$ such that $\psi_{i*}(u_i) = \tilde{u}_i$.
		As in the first part, we can write
		\[
		w = a^R_g + \tau_g \sss_* (w),
		\]
		where $a \in (A_\G)_{\ttt(g)}$ and $\rho(a) = \ttt_*(w) - \Ad_g \sss_* (w)$.
		Consider the element of $T_{\varphi(h)}\G\arr$ defined by
		\begin{align*}
			\gamma &\coloneqq \tau_{\varphi(h)}\Ad_{\theta_1(y_1)^{-1}}\sss_*(w) \\
			&\qquad + (\Ad_{\theta_2(y_2)^{-1}}(a + \dot{\theta}_2 (u_2)) + K(\theta_2(y_2)^{-1}, g)(\sss_* (w)) - K(\varphi(h), \theta_1(y_1)^{-1})(\sss_*(w)))^R_{\varphi(h)}\\
			&\qquad -K(\theta_2(y_2)^{-1}, \theta_2(y_2)(\varphi_*(v_2 - \nu_{2*}(u_2))))^R_{\varphi(h)} \\
			&\qquad + (K(\theta_1(y_1)^{-1}, \theta_1(y_1))(\varphi_*(v_1 - \nu_{1*}(u_1))) - \Ad_{\theta_1(y_1)^{-1}}\dot{\theta}_1(u_1))^L_{\varphi(h)}.
		\end{align*}
		We have
		\begin{align*}
			\sss_* \gamma &= \Ad_{\theta_1(y_1)^{-1}} \sss_*(w) - \rho(K(\theta_1(y_1)^{-1}, \theta_1(y_1))(\varphi_*(v_1 - \nu_{1*}(u_1)))) + \rho(\Ad_{\theta_1(y_1)^{-1}}\dot{\theta}_1(u_1)) \\
			&= \Ad_{\theta_1(y_1)^{-1}} \sss_* (w) - \Ad_{\theta_1(y_1)^{-1}} \Ad_{\theta_1(y_1)} \varphi_*(v_1 - \nu_{1*}(u_1)) + \varphi_*(v_1 - \nu_{1*}(u_1)) + \Ad_{\theta_1(y_1)^{-1}} \rho \dot{\theta}_1(u_1) \\
			&= \Ad_{\theta_1(y_1)^{-1}} \sss_* (w) - \Ad_{\theta_1(y_1)^{-1}} \Ad_{\theta_1(y_1)} \varphi_*(v_1 - \nu_{1*}(u_1)) + \varphi_*(v_1 - \nu_{1*}u_1) + \Ad_{\theta_1(y_1)^{-1}}(\mu_{1*}\psi_{1*}u_1 - \Ad_{\theta_1(y_1)} \varphi_*\nu_{1*}u_1) \\
			&= \Ad_{\theta_1(y_1)^{-1}} \Ad_{\theta_1(y_1)}\varphi_*(v_1) - \Ad_{\theta_1(y_1)^{-1}} \Ad_{\theta_1(y_1)} \varphi_*(v_1 - \nu_{1*}(u_1)) + \varphi_*(v_1 - \nu_{1*}(u_1)) - \Ad_{\theta_1(y_1)^{-1}}\Ad_{\theta_1(y_1)} \varphi_*\nu_{1*}(u_1) \\
			&= \varphi_*(v_1 - \nu_{1*}(u_1)) \\
		\end{align*}
		and
		\begin{align*}
			\ttt_*\gamma &= \Ad_{\varphi(h)}\Ad_{\theta_1(y_1)^{-1}} \sss_*(w) + \Ad_{\theta_2(y_2)^{-1}}(\ttt_*(w) - \Ad_g\sss_*(w) + \mu_{1*}\psi_{1*}(u_2) - \Ad_{\theta_2(y_2)}\varphi_*\nu_{2*}(u_2)) \\
			&\qquad +\Ad_{\theta_2(y_2)^{-1}}\Ad_g \sss_*(w)  - \Ad_{\theta_2(y_2)^{-1}g}\sss_*(w) - \Ad_{\varphi(h)}\Ad_{\theta_1(y_1)^{-1}}\sss_*(w) + \Ad_{\varphi(h)\theta_1(y_1)^{-1}}\sss_*(w) \\
			&\qquad - \Ad_{\theta_2(y_2)^{-1}}\Ad_{\theta_2(y_2)}(\varphi_*(v_2 - \nu_{2*}(u_2))) + \varphi_*(v_2 - \nu_{2*}(u_2)) \\
			&= \varphi_*(v_2 - \nu_{2*}(u_2)).
		\end{align*}
		It follows that $\M_1 \htimes_\H \M_2$ is transverse.
		
		In this case, we have a Lie groupoid morphism
		\begin{align*}
			\M_1 \htimes_\H \M_2 &\too \L_1 \htimes_\G \L_2 \\
			(l_1, h, l_2) &\mtoo (\psi_1(l_1), \theta_2(\sss(l_2)) \cdot \varphi(h) \cdot \theta_1(\sss(l_1))^{-1}, \psi_2(l_2)).
		\end{align*}
		To see that this is a Morita morphism, we first show that the map on objects is a surjective submersion. This is accomplished by establishing the existence of local sections. We first note that $\H \cong \varphi^*\G$. It follows that $(\M_1 \htimes_\H \M_2)\obj = \M_1\obj \times_{\varphi \nu_1, \sss} \G\arr \times_{\ttt, \varphi \nu_2} \M_2\obj$.
		The map on objects must therefore take the form
		\begin{align}
			\tilde{\varphi} : \M_1\obj \times_{\varphi \nu_1, \sss} \G\arr \times_{\ttt, \varphi \nu_2} \M_2\obj
			&\too
			\L_1\obj \times_{\mu_1, \sss} \G\arr \times_{\ttt, \mu_2} \L_2\obj \label{hadzso7n} \\
			(y_1, g, y_2) &\mtoo (\psi_1(y_1), \theta_2(y_2) \cdot g \cdot \theta_1(y_1)^{-1}, \psi_2(y_2)). \nonumber
		\end{align}
		Let $\sigma_i$ be local sections of $\psi_i : \M_i\obj \to \L_i\obj$.
		The map $$(x_1, g, x_2) \mto (\sigma_1(x_1), \theta_2(\sigma_2(x_2))^{-1} g \theta_1(\sigma_1(x_1)), \sigma_2(x_2))$$ is a local section of \eqref{hadzso7n}.
		A straightforward computation then shows that the induced map $\M_1 \htimes_\H \M_2 \longrightarrow \tilde{\varphi}^*(\L_1 \htimes_\G \L_2)$ is a diffeomorphism.
	\end{proof}
	
	\begin{proposition}\label{eiptq550}
		Consider 1-shifted Lagrangian correspondences
		\begin{equation}\label{ri7ihfoy}
			\begin{tikzcd}[row sep={2em,between origins},column sep={4em,between origins}]
				& \L_1 \arrow{dl} \arrow{dr} & & \L_2 \arrow{dl} \arrow{dr} & \\
				\G_1 & & \G_2 & & \G_3
			\end{tikzcd}
		\end{equation}
		and
		\begin{equation}\label{190c4zw2}
			\begin{tikzcd}[row sep={2em,between origins},column sep={4em,between origins}]
				& \L_1' \arrow{dl} \arrow{dr} & & \L_2' \arrow{dl} \arrow{dr} & \\
				\G_1' & & \G_2' & & \G_3'
			\end{tikzcd},
		\end{equation}
		where $\G_1 \longleftarrow \L_1 \longrightarrow \G_2$ is Lagrangially Morita equivalent to $\G_1' \leftarrow \L_1' \to \G_2'$, and $\G_2 \longleftarrow \L_2 \longrightarrow \G_3$ is Lagrangially Morita equivalent to $\G_2' \leftarrow \L_2' \to \G_3'$.
		Then \eqref{ri7ihfoy} is transverse if and only if \eqref{190c4zw2} is transverse.
		In this case, the compositions
		\begin{equation}
			\begin{tikzcd}[row sep={3em,between origins},column sep={5em,between origins}]
				& \L_1 \htimes_{\G_2} \L_2 \arrow{dl} \arrow{dr} & \\
				\G_1 & & \G_3
			\end{tikzcd}
			\qquad\text{and}\qquad
			\begin{tikzcd}[row sep={3em,between origins},column sep={5em,between origins}]
				& \L_1' \htimes_{\G_2'} \L_2' \arrow{dl} \arrow{dr} & \\
				\G_1' & & \G_3'
			\end{tikzcd}
		\end{equation}
		are Lagrangially Morita equivalent.
	\end{proposition}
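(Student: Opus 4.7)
The plan is to reduce the statement to Lemma \ref{32j00jes} by building a common refinement of the two Lagrangian Morita equivalences, then assembling from it a Lagrangian Morita equivalence between the compositions.

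First, I would unpack the Lagrangian Morita equivalence data. The equivalence $(\G_1 \leftarrow \L_1 \to \G_2) \simeq (\G_1' \leftarrow \L_1' \to \G_2')$ supplies a groupoid $\M_1$ with Morita morphisms $\M_1 \to \L_1$ and $\M_1 \to \L_1'$, covering a symplectic Morita equivalence of $\G_1 \times \G_2^-$ and $\G_1' \times (\G_2')^-$; up to a harmless refinement, I may take this to factor as $\H_1 \times \H_2^-$ with $\H_j$ a symplectic Morita equivalence between $\G_j$ and $\G_j'$. Similarly, the second equivalence yields $\M_2$ together with symplectic Morita equivalences $\H_2'$ and $\H_3$. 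Since $\H_2$ and $\H_2'$ are a priori distinct symplectic Morita equivalences between $\G_2$ and $\G_2'$, I would take their common refinement $\tilde{\H}_2 \coloneqq \H_2 \htimes_{\G_2'} \H_2'$, which is again a symplectic Morita equivalence of $\G_2$ with $\G_2'$, and pull $\M_1$ and $\M_2$ back along the projections $\tilde{\H}_2 \to \H_2$ and $\tilde{\H}_2 \to \H_2'$ to obtain Morita-equivalent replacements $\tilde{\M}_1$, $\tilde{\M}_2$ sitting over the single ambient equivalence $\tilde{\H}_2$.

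Second, I would apply Lemma \ref{32j00jes} to two $2$-commutative diagrams: one with vertical Morita morphisms $\tilde{\M}_i \to \L_i$ and $\tilde{\H}_2 \to \G_2$, the other with $\tilde{\M}_i \to \L_i'$ and $\tilde{\H}_2 \to \G_2'$. This simultaneously shows that transversality of $\L_1 \htimes_{\G_2} \L_2$, of $\L_1' \htimes_{\G_2'} \L_2'$, and of $\tilde{\M}_1 \htimes_{\tilde{\H}_2} \tilde{\M}_2$ are equivalent, and produces Morita morphisms $\tilde{\M}_1 \htimes_{\tilde{\H}_2} \tilde{\M}_2 \to \L_1 \htimes_{\G_2} \L_2$ and $\tilde{\M}_1 \htimes_{\tilde{\H}_2} \tilde{\M}_2 \to \L_1' \htimes_{\G_2'} \L_2'$. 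The natural transformations witnessing $2$-commutativity in the original Lagrangian Morita data descend to natural transformations between the compositions.

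Finally, I would promote this to a Lagrangian Morita equivalence. The ambient datum is the symplectic Morita equivalence $\H_1 \times \H_3^-$ between $\G_1 \times \G_3^-$ and $\G_1' \times (\G_3')^-$. I would define the required $2$-form on $(\tilde{\M}_1 \htimes_{\tilde{\H}_2} \tilde{\M}_2)\obj$ by combining the pulled-back Lagrangian Morita $2$-forms on $\tilde{\M}_i\obj$, the $2$-form on $\tilde{\H}_2\obj$ arising from the composition of symplectic Morita equivalences, and the relevant pulled-back $2$-forms from the $\G_2$- and $\G_2'$-level arrow manifolds, with signs dictated by the composition formula $\pr_{\L_1}^*\gamma_1 + \pr_{\L_2}^*\gamma_2 - \pr_{\G_2\arr}^*\omega_2$ for $1$-shifted Lagrangians. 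The main obstacle will be verifying the $2$-form identity $\psi_1^*\gamma_1 - \psi_2^*\gamma_2 = \nu^*\delta - \theta_1^*\omega + \theta_2^*\omega$ from the definition of Lagrangian Morita equivalence for the composed correspondences. This requires careful bookkeeping across three layers, namely the original Lagrangian Morita equivalences for $\L_1$ and $\L_2$, the composition of symplectic Morita equivalences at the $\G_2$-stage producing $\tilde{\H}_2$, and the composition formula for $1$-shifted Lagrangian correspondences, and is where essentially all of the content of the proof resides; the transversality assertion itself follows directly from Lemma \ref{32j00jes}.
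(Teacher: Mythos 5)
Your skeleton is the same as the paper's: apply Lemma \ref{32j00jes} to the two halves of a 2-commutative diagram of Lagrangian Morita equivalences to transfer transversality and to obtain Morita morphisms from a common middle groupoid onto both compositions, and then check the 2-form identity in the definition of Lagrangian Morita equivalence. The difficulty is in your preparatory step. First, a symplectic Morita equivalence between $\G_1 \times \G_2^-$ and $\G_1' \times (\G_2')^-$ need not factor, even after refinement, as a product $\H_1 \times \H_2^-$ of equivalences of the factors, so this is not a ``harmless'' normalization. Second, and more seriously, if the two given equivalences come with distinct middle pieces $\H_2$ and $\H_2'$ between $\G_2$ and $\G_2'$, the homotopy fibre product $\tilde{\H}_2 \coloneqq \H_2 \htimes_{\G_2'} \H_2'$ is not ``again a symplectic Morita equivalence of $\G_2$ with $\G_2'$'' refining both: it carries two structure maps to $\G_2$ (one through $\H_2$, one through $\H_2'$) which are not 2-isomorphic in general, so the two squares required by Lemma \ref{32j00jes} cannot be placed over a single vertical arrow $\tilde{\H}_2 \longrightarrow \G_2$. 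In fact, if $\H_2$ and $\H_2'$ present different isomorphisms of the 1-shifted symplectic stacks associated to $\G_2$ and $\G_2'$, the conclusion itself fails (transferring $\L_2$ along a nontrivial self-equivalence of $\G_2$ changes the composition), so no refinement trick can repair this. The statement is to be read, as the paper's proof does via the diagram \eqref{for4dr3w}, with a single shared symplectic Morita equivalence $(\H_2, \delta_2)$ serving both Lagrangian Morita equivalences; under that reading your refinement step is unnecessary, and without that reading it is a step that would fail.

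The part you defer as ``bookkeeping'' is the remaining mathematical content, and your description of it is off target. No 2-form is to be \emph{defined} on $(\M_1 \htimes_{\H_2} \M_2)\obj$: a Lagrangian Morita equivalence is a property-style identity of pullbacks, and the ambient symplectic Morita data for the composed correspondences consist only of $(\H_1,\delta_1)$ and $(\H_3,\delta_3)$; in particular, neither $\delta_2$ nor any 2-form attached to your $\tilde{\H}_2$ may appear in the final identity. The crux of the verification, which the paper carries out explicitly, is that upon pulling back \eqref{upka972s} by $\pr_{\M_1}$ and \eqref{yoxlfq77} by $\pr_{\M_2}$ and summing, the $\delta_2$-contributions cancel precisely because $\nu_{12}\pr_{\M_1} = \nu_{22}\pr_{\M_2}$ on $\M_1 \htimes_{\H_2} \M_2$, leaving exactly the identity demanded for a Lagrangian Morita equivalence between $\L_1 \htimes_{\G_2} \L_2$ and $\L_1' \htimes_{\G_2'} \L_2'$ over $\H_1$ and $\H_3$. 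Your proposal neither identifies this cancellation nor performs the computation, so as written it establishes the transversality transfer and the groupoid-level Morita equivalence but not the compatibility of the 1-shifted Lagrangian structures.
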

	
	\begin{proof}
		Lemma \ref{32j00jes} implies the following: \eqref{ri7ihfoy} is transverse if and only if \eqref{190c4zw2} is transverse, in which case the homotopy fibre products $\L_1 \htimes_{\G_2} \L_2$ and $\L_1' \htimes_{\G_2'} \L_2'$ are Morita equivalent as Lie groupoids.
		It remains to check compatibility with the 1-shifted Lagrangian structures.
		We have a 2-commutative diagram
		\newcommand{\shortenlength}{30pt}
		\begin{equation}\label{for4dr3w}
			\begin{tikzcd}[row sep={7em,between origins},column sep={7em,between origins}]
				(\G_1, \omega_1, \phi_1) & (\L_1, \gamma_1) \arrow[swap]{l}{\mu_{11}} \arrow{r}{\mu_{12}} \arrow[Rightarrow,shorten=\shortenlength]{dl}{\theta_{11}} \arrow[Rightarrow,shorten=\shortenlength]{dr}{\theta_{12}} & (\G_2, \omega_2, \phi_2) & (\L_2, \gamma_2) \arrow[swap]{l}{\mu_{22}} \arrow{r}{\mu_{23}} \arrow[Rightarrow,shorten=\shortenlength]{dl}{\theta_{22}} \arrow[Rightarrow,shorten=\shortenlength]{dr}{\theta_{23}} & (\G_3, \omega_3, \phi_3) \\
				(\H_1, \delta_1) \arrow[swap]{u}{\varphi_1} \arrow{d}{\varphi_1'} & \M_1 \arrow[swap]{l}{\nu_{11}} \arrow{r}{\nu_{12}}  \arrow[swap]{u}{\psi_1} \arrow{d}{\psi_1'} & (\H_2, \delta_2) \arrow[swap]{u}{\varphi_2} \arrow{d}{\varphi_2'} & \M_2 \arrow[swap]{l}{\nu_{22}} \arrow{r}{\nu_{23}} \arrow[swap]{u}{\psi_2} \arrow{d}{\psi_2'} & (\H_3, \delta_3) \arrow[swap]{u}{\varphi_3} \arrow{d}{\varphi_3'} \\
				(\G_1', \omega_1', \phi_1') & (\L_1', \gamma_1') \arrow{l}{\mu_{11}'} \arrow[swap]{r}{\mu_{12}'} \arrow[Rightarrow,shorten=\shortenlength,swap]{ul}{\theta_{11}'} \arrow[Rightarrow,shorten=\shortenlength,swap]{ur}{\theta_{12}'} & (\G_2', \omega_2', \phi_2') & (\L_2', \gamma_2') \arrow{l}{\mu_{22}'} \arrow[swap]{r}{\mu_{23}'} \arrow[Rightarrow,shorten=\shortenlength,swap]{ul}{\theta_{22}'} \arrow[Rightarrow,shorten=\shortenlength,swap]{ur}{\theta_{23}'} & (\G_3', \omega_3', \phi_3')
			\end{tikzcd}
		\end{equation}
		of Lagrangian Morita equivalences.
		In other words, the vertical maps are Morita morphisms and the following hold:
		\begin{align}
			\varphi_i^*\omega_i - \varphi_i'^*\omega_i' &= \ttt^*\delta_i - \sss^*\delta_i \quad \text{for } i = 1, 2, 3 \nonumber; \\
			\varphi_i^*\phi_i - \varphi_i'^*\phi_i' &= -\mathrm{d}\delta_i \quad \text{for } i = 1, 2, 3 \nonumber; \\
			\psi_1^*\gamma_1 - \psi_1'^*\gamma_1' &= \nu_{11}^*\delta_1 - \nu_{12}^*\delta_2 - \theta_{11}^*\omega_1 + \theta_{12}^*\omega_2 + \theta_{11}'^*\omega_1' - \theta_{12}'^*\omega_2'; \label{upka972s} \\
			\psi_2^*\gamma_2 - \psi_2'^*\gamma_2' &= \nu_{22}^*\delta_2 - \nu_{23}^*\delta_3 - \theta_{22}^*\omega_2 + \theta_{23}^*\omega_3 + \theta_{22}'^*\omega_2' - \theta_{23}'^*\omega_3'. \label{yoxlfq77}
		\end{align}
		Taking homotopy fibre products in \eqref{for4dr3w} yields the 2-commutative diagram
		\renewcommand{\shortenlength}{46pt}
		\begin{equation}\label{m7fhcmlj}
			\begin{tikzcd}[row sep={6em,between origins},column sep={14em,between origins}]
				(\G_1, \omega_1, \phi_1) & (\L_1 \htimes_{\G_2} \L_2, \pr_{\L_1}^*\gamma_1 + \pr_{\L_2}^*\gamma_2) \arrow[swap]{l}{\mu_{11} \pr_{\L_1}} \arrow{r}{\mu_{23}\pr_{\L_2}} \arrow[Rightarrow,shorten=\shortenlength]{dl}{\theta_{11} \pr_{\M_1}} \arrow[Rightarrow,shorten=\shortenlength]{dr}{\theta_{23} \pr_{\M_2}} & (\G_3, \omega_3, \phi_3) \\
				(\H_1, \delta_1) \arrow[swap]{u}{\varphi_1} \arrow{d}{\varphi_1'} & \M_1 \htimes_{\H_2} \M_2 \arrow[swap]{l}{\nu_{11}\pr_{\M_1}} \arrow{r}{\nu_{23}\pr_{\M_2}}  \arrow[swap]{u}{\psi} \arrow{d}{\psi'} & (\H_2, \delta_2) \arrow[swap]{u}{\varphi_3} \arrow{d}{\varphi_3'} \\
				(\G_1', \omega_1', \phi_1') & (\L_1' \htimes_{\G_2'} \L_2', \pr_{\L_1'}\gamma_1' + \pr_{\L_2'}\gamma_2') \arrow{l}{\mu_{11}'\pr_{\L_1'}} \arrow[swap]{r}{\mu_{23}'\pr_{\L_2'}} \arrow[Rightarrow,shorten=\shortenlength,swap]{ul}{\theta_{11}' \pr_{\M_1}} \arrow[Rightarrow,shorten=\shortenlength,swap]{ur}{\theta_{23}' \pr_{\M_2}} & (\G_3', \omega_3', \phi_3')
			\end{tikzcd},
		\end{equation}
		where
		\begin{align*}
			\psi(m_1, h_2, m_2) &= (\psi_1(m_1), \theta_{22}(\sss(m_2))^{-1} \cdot \varphi_2(h_2) \cdot \theta_{12}(\sss(m_1)), \psi_2(m_2)) \\
			\psi'(m_1, h_2, m_2) &= (\psi_1'(m_1), \theta_{22}'(\sss(m_2))^{-1} \cdot \varphi_2'(h_2) \cdot \theta_{12}'(\sss(m_1)), \psi_2'(m_2))
		\end{align*}
		for all $(m_1, h_2, m_2) \in \M_1 \htimes_{\H_2} \M_2$.
		By Lemma \ref{32j00jes}, $\psi$ and $\psi'$ are Morita morphisms.
		Since $\pr_{\L_i} \psi = \psi_i \pr_{\M_i}$ and $\pr_{\L_i'} \psi' = \psi_i' \pr_{\M_i}$ for $i = 1, 2$, we have
		\begin{align*}
			&\psi^*(\pr_{\L_1}^*\gamma_1 + \pr_{\L_2}^*\gamma_2)
			-
			\psi'^*(\pr_{\L_1'}^*\gamma_1' + \pr_{\L_2'}^*\gamma_2')
			\\
			&=
			\pr_{\M_1}^*(\psi_1^*\gamma_1 - \psi_1'^*\gamma_1') + \pr_{\M_2}^*(\psi_2^*\gamma_2 - \psi_2'^*\gamma_2') \\
			&=
			\pr_{\M_1}^*(\nu_{11}^*\delta_1 - \nu_{12}^*\delta_2 - \theta_{11}^*\omega_1 + \theta_{12}^*\omega_2 + \theta_{11}'^*\omega_1' - \theta_{12}'^*\omega_2') \\
			&
			\,+\pr_{\M_2}^*(\nu_{22}^*\delta_2 - \nu_{23}^*\delta_3 - \theta_{22}^*\omega_2 + \theta_{23}^*\omega_3 + \theta_{22}'^*\omega_2' - \theta_{23}'^*\omega_3') \\
			&=
			(\nu_{11}\pr_{\M_1})^*\delta_1 - (\nu_{23}\pr_{\M_2})^*\delta_2
			-(\theta_{11}\pr_{\M_1})^*\omega_1
			+(\theta_{23}\pr_{\M_2})^*\omega_3
			+(\theta_{11}'\pr_{\M_1})^*\omega_1'
			-(\theta_{23}'\pr_{\M_2})^*\omega_3';
		\end{align*}
		The second equality follows from \eqref{upka972s} and \eqref{yoxlfq77}, and last equality follows from the fact that $\nu_{12} \pr_{\M_1} = \nu_{22} \pr_{\M_2}$ on $\M_1 \htimes_{\H_2} \M_2$.
		We conclude that \eqref{m7fhcmlj} is a Lagrangian Morita equivalence.
	\end{proof}
	
	\begin{lemma}\label{rw45fz6q}
		Let
		\begin{equation}\label{7h4kt9r9}
			\begin{tikzcd}[row sep={2em,between origins},column sep={4em,between origins}]
				& \L \arrow{dl} \arrow{dr} & \\
				\G_1 & & \G_2,
			\end{tikzcd}
		\end{equation}
		be a 1-shifted Lagrangian correspondence. Regard $\G_1 \longleftarrow \G_1 \longrightarrow \G_1$ as a 1-shifted Lagrangian correspondence via the trivial 2-form on $\G_1\obj$.
		Then the pair
		\begin{equation}\label{7qkor7bl}
			\begin{tikzcd}[row sep={2em,between origins},column sep={4em,between origins}]
				& \G_1 \arrow{dl} \arrow{dr} & & \L \arrow{dl} \arrow{dr} & &\\
				\G_1 & & \G_1 & & \G_2
			\end{tikzcd}
		\end{equation}
		is transverse, and the composition
		\begin{equation}\label{vcbwyrld}
			\begin{tikzcd}[row sep={3em,between origins},column sep={5em,between origins}]
				& \G_1 \htimes_{\G_1} \L \arrow{dl} \arrow{dr} & & \\
				\G_1 & & \G_2
			\end{tikzcd}
		\end{equation}
		is weakly equivalent to \eqref{7h4kt9r9}.
		The analogous statement for composition on the right by $\G_2 \longleftarrow \G_2 \longrightarrow \G_2$ also holds.
	\end{lemma}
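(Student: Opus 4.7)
The plan is to realize \eqref{vcbwyrld} and \eqref{7h4kt9r9} as weakly equivalent via the Morita equivalence $\L \xleftarrow{p} \G_1 \htimes_{\G_1} \L \xrightarrow{\Id} \G_1 \htimes_{\G_1} \L$, where $p$ projects onto the $\L$-factor. The natural transformation $\theta$ required in \eqref{msge1bqw} will be given by the middle $\G_1$-component of the fibre product (inverted), and the 2-form condition will reduce to the standard multiplicativity identities $\iii^*\omega_1 = -\omega_1$ and $\uuu^*\omega = 0$.

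The first step is to check transversality of \eqref{7qkor7bl}: given $(u, v) \in T_x\G_1\obj \oplus T_{\mu_1(y)}\G_1\obj$, where $\mu_1 : \L\obj \longrightarrow \G_1\obj$ is the first moment map of $\L$, submersivity of $\ttt : \G_1\arr \longrightarrow \G_1\obj$ yields $h \in T\G_1\arr$ with $\ttt_*(h) = v$, so that $(u, v) = (u - \sss_*(h), 0) + (\sss_*(h), \ttt_*(h))$ lies in the sum of the two relevant images. One then describes the fibre product: objects of $\G_1 \htimes_{\G_1} \L$ are triples $(x, g, y) \in \G_1\obj \times \G_1\arr \times \L\obj$ with $\sss(g) = x$ and $\ttt(g) = \mu_1(y)$ (equivalently, $\G_1\arr \fp{\ttt}{\mu_1} \L\obj$), and arrows $(x, g, y) \longrightarrow (x', g', y')$ are pairs $(h, a) \in \G_1\arr \times \L\arr$ satisfying $g' h = \mu_1(a) g$. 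The map $p\obj$ is a surjective submersion because $\ttt$ is, and since $h = g'^{-1} \mu_1(a) g$ is uniquely determined by $a$ and the endpoints, the induced morphism $\G_1 \htimes_{\G_1} \L \longrightarrow (p\obj)^*\L$ is a Lie groupoid isomorphism. Hence $p$ is a Morita morphism.

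It remains to verify the 2-form condition $\Id^*\gamma_2 - p^*\gamma = \theta^*\omega$ of \eqref{msge1bqw}. Writing $\mu_2 : \L\obj \longrightarrow \G_2\obj$ for the second moment map of $\L$, the 1-shifted Lagrangian structure on $\G_1 \htimes_{\G_1} \L$ is $\gamma_2 = \pr_\L^*\gamma - \pr_{\G_1\arr}^*\omega_1$, the 2-form on the identity correspondence $\G_1 \longleftarrow \G_1 \longrightarrow \G_1$ being zero; therefore the left-hand side equals $-\pr_{\G_1\arr}^*\omega_1$. A natural transformation between the two composed legs into $\G_1 \times \G_2^-$ is supplied by
\[
\theta : (x, g, y) \mtoo (g^{-1}, \uuu_{\mu_2(y)}) \in \G_1\arr \times \G_2\arr.
\]
Pulling back the 2-form $\pi_1^*\omega_1 - \pi_2^*\omega_2$ along $\theta$ yields $\pr_{\G_1\arr}^*\iii^*\omega_1 - 0 = -\pr_{\G_1\arr}^*\omega_1$, via $\iii^*\omega_1 = -\omega_1$ and $\uuu^*\omega_2 = 0$, so both sides agree. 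The case of right composition with $\G_2 \longleftarrow \G_2 \longrightarrow \G_2$ is entirely symmetric, using the first coordinate of the fibre product instead. The only non-bookkeeping ingredient is the collection of multiplicativity identities for $\omega_1$ and $\omega_2$.
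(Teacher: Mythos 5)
Your proof is correct and follows essentially the same route as the paper: the same transversality argument using submersivity of the target map, the same identification of $\G_1 \htimes_{\G_1} \L$ with the Morita morphism $\pr_\L$ onto $\L$, and the same weak-equivalence verification reducing to multiplicativity identities. The only cosmetic difference is that you orient the equivalence the other way and take $\theta(x,g,y)=(g^{-1},\uuu_{\mu_2(y)})$, so you invoke $\iii^*\omega_1=-\omega_1$ in addition to $\uuu^*\omega_2=0$, whereas the paper uses $(g,\uuu_{\mu_2(y)})$ and needs only the latter.
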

	
	\begin{proof}
		Denote the quasi-symplectic structure on $\G_i$ by $(\omega_i, \phi_i)$ for $i = 1, 2$, and the 1-shifted Lagrangian structure on \eqref{7h4kt9r9} by $\gamma$.
		Let us also denote also the two morphisms in \eqref{7h4kt9r9} by $\mu_1:\L\longrightarrow\G_1$ and $\mu_2:\L\longrightarrow\G_2$.
		To establish composability in \eqref{7qkor7bl}, we need to show that the maps
		\begin{equation}\label{gnykqot1}
			\begin{tikzcd}
				\G_1\obj \times \L\obj \arrow[swap]{dr}{\Id \times \mu_1} & & \G_1\arr \arrow{dl}{(\sss, \ttt)} \\
				& \G_1\obj \times \G_1\obj &
			\end{tikzcd}
		\end{equation}
		are transverse.
		Let $g \in \G_1\arr$ be such that $(\sss(g), \ttt(g)) = (x, \mu_1(y))$ for some $(x, y) \in \G_1\obj \times \L\obj$, and let $(u, v) \in T_{\sss(g)}\G_1\obj \times T_{\ttt(g)}\G_1\obj$.
		Since the target map of a Lie groupoid is a submersion, we can write $v = \ttt_* (w)$ for some $w \in T_g\G_1\arr$.
		It follows that $(u, v) = (\Id_*(u - \sss_*(w)), \mu_{1*}(0)) + (\sss_*(w), \ttt_*(w))$, proving transversality.
		We also get that the projection
		\[
		\pr_\L : \G_1 \htimes_{\G_1} \L \too \L
		\]
		is a Morita morphism, fitting into a 2-commutative diagram
		\begin{equation}\label{yqkndi0s}
			\begin{tikzcd}[row sep={5em,between origins},column sep={5em,between origins}]
				\arrow[Rightarrow,shorten=42pt]{ddr}{\widetilde{\pr}_{\G_1}} & \G_1 \htimes_{\G_1} \L \arrow[swap]{dl}{\pr_{\G_1}} \arrow{dr}{\mu_2\pr_\L} \arrow{dd}{\pr_\L} & \\
				\G_1 & & \G_2 \\
				& \L \arrow{ul}{\mu_1} \arrow[swap]{ur}{\mu_2} &
			\end{tikzcd},
		\end{equation}
		where $\widetilde{\pr}_{\G_1} : \pr_{\G_1} \Rightarrow \mu_1 \pr_\L$ is the projection $(\G_1 \htimes_{\G_1} \L)\obj = \G_1\obj \fp{\Id}{\sss} \G_1\arr \fp{\ttt}{\mu_1} \L\obj \to \G_1\arr$.
		In the framework of weak equivalence of 1-shifted Lagrangians as in \eqref{msge1bqw}, we can write \eqref{yqkndi0s} as
		\begin{equation}\label{nyi4esjp}
			\begin{tikzcd}[row sep={5em,between origins},column sep={5em,between origins}]
				& \G_1 \htimes_{\G_1} \L \arrow[swap]{dl}{\Id} \arrow{dr}{\pr_\L} & \\
				\G_1 \htimes_{\G_1} \L \arrow[swap]{dr}{\pr_{\G_1} \times \mu_1 \pr_\L} \arrow[Rightarrow,shorten=20pt]{rr}{\widetilde{\pr}_{\G_1} \times \uuu \mu_2\pr_\L} & & \L \arrow{dl}{\mu_1 \times \mu_2} \\
				&\G_1 \times \G_2^-. &
			\end{tikzcd}
		\end{equation}
		By the definition of composition, the 1-shifted Lagrangian structure on the composition \eqref{vcbwyrld} is given by $\pr_{\G_1}^*0 + \pr_\L^*\gamma - \widetilde{\pr}_{\G_1}^*\omega_1$.
		The statement that \eqref{nyi4esjp} is a Lagrangian Morita equivalence then amounts to the identity
		\[
		\pr_\L^*\gamma - \Id^*(\pr_{\G_1}^*0 + \pr_\L^*\gamma - \widetilde{\pr}_{\G_1}^*\omega_1) = (\widetilde{\pr}_{\G_1} \times \uuu \mu_2 \pr_\L)^*(\pr_{\G_1}^*\omega_1 - \pr_{\G_2}^*\omega_2).
		\]
		This identity from the fact that $\uuu^*\omega_2 = 0$ for a multiplicative form $\omega_2$.
	\end{proof}

	\subsection{Strong fibre products and transversality}
	Consider 1-shifted Lagrangian correspondences
	\begin{equation}\label{e1rk3u0k}
		\begin{tikzcd}[row sep={2em,between origins},column sep={4em,between origins}]
			& \L_1 \arrow[swap]{dl} \arrow{dr} & & \L_2 \arrow[swap]{dl} \arrow{dr} & \\
			\G_1 & & \G_2 & & \G_3.
		\end{tikzcd}
	\end{equation}
	It is sometimes useful to consider the \defn{strong fibre product} $\L_1 \stimes_{\G_2} \L_2$, defined as the standard set-theoretical fibre product on both arrows and objects.
	We say that the 1-shifted Lagrangian correspondences \eqref{e1rk3u0k} are \defn{strongly transverse} if the fibre product on arrows is transverse.
	As in \cite[Proof of Lemma A.1.3]{bur-cab-hoy:16}, this implies that the fibre product on object is also transverse, and that the vector bundle morphisms
	\[
	\begin{tikzcd}[row sep={2em,between origins},column sep={4em,between origins}]
		A_{\L_1} \arrow{dr} &  & A_{\L_2} \arrow{dl} \\
		& A_{\G_2} &
	\end{tikzcd}
	\]
	are transverse.
	It follows that the strong fibre product is a Lie groupoid endowed with the structure of a 1-shifted Lagrangian correspondence
	\[
	\begin{tikzcd}[row sep={3em,between origins},column sep={5em,between origins}]
		& \L_1 \stimes_{\G_2} \L_2 \arrow[swap]{dl} \arrow{dr} & \\
		\G_1 & & \G_3
	\end{tikzcd}
	\]
	with respect to $\pr_{\L_1}^*\gamma_1 + \pr_{\L_2}^*\gamma_2$, where $\gamma_i$ are the 1-shifted Lagrangian structures on $\L_1$ and $\L_2$, respectively \cite[Theorem 4.1]{may:23}.
	The following proposition shows that the homotopy and strong fibre products are equivalent in many situations.
	
	\begin{proposition}\label{blagblt1}
		Suppose that the 1-shifted Lagrangian correspondences \eqref{e1rk3u0k} are transverse and strongly transverse.
		Suppose also that the map
		\begin{align}
			\L_1\arr \fp{\mu_{12} \sss}{\mu_{22} \sss} \L_2\arr
			&\too
			\L_1\obj \fp{\mu_{12}}{\sss} \G_2\arr \fp{\ttt}{\mu_{22}} \L_2\obj \label{nvwikwcm} \\
			(l_1, l_2) &\mtoo (\ttt(l_1), \mu_{22}(l_2)^{-1}\mu_{12}(l_1), \ttt(l_2)) \nonumber
		\end{align}
		is a surjective submersion.
		Then the homotopy fibre product $\L_1 \htimes_{\G_2} \L_2$ and the strong fibre product $\L_1 \stimes_{\G_2} \L_2$ are weakly equivalent as 1-shifted Lagrangian correspondences from $\G_1$ to $\G_3$.
	\end{proposition}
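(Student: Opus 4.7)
The plan is to exhibit a canonical essential equivalence
\[
\iota : \L_1 \stimes_{\G_2} \L_2 \longrightarrow \L_1 \htimes_{\G_2} \L_2,
\]
verify that it respects the 1-shifted Lagrangian structures, and then invoke the interchangeability of essential equivalences and Morita morphisms discussed in Subsection \ref{3y574s4t} to extract a weak equivalence. On objects, define $\iota(x_1, x_2) \coloneqq (x_1, \uuu(\mu_{12}(x_1)), x_2)$ (well-defined by the strong fibre product condition $\mu_{12}(x_1) = \mu_{22}(x_2)$), and on arrows, $\iota(l_1, l_2) \coloneqq (l_1, \uuu, l_2)$; the strong-fibre-product condition on arrows is precisely what makes the $\G_2$-commutativity condition for $\L_1 \htimes_{\G_2} \L_2$ hold with both middle coordinates set to the identity.

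I would then verify that $\iota$ is an essential equivalence. For fully-faithfulness, arrows of the pullback $(\iota\obj)^*(\L_1 \htimes_{\G_2} \L_2)$ between $\iota(x_1, x_2)$ and $\iota(y_1, y_2)$ are pairs $(l_1, l_2) \in \L_1\arr \times \L_2\arr$ with $l_i : x_i \to y_i$ satisfying the $\G_2$-commutativity condition, which collapses exactly to $\mu_{12}(l_1) = \mu_{22}(l_2)$ since both middle coordinates are identities. For essential surjectivity, reading off directly from the arrow formula of $\L_1 \htimes_{\G_2} \L_2$, the map
\[
(\L_1 \stimes_{\G_2} \L_2)\obj \fp{\iota\obj}{\sss} (\L_1 \htimes_{\G_2} \L_2)\arr \longrightarrow (\L_1 \htimes_{\G_2} \L_2)\obj
\]
sends $(x_1, x_2, l_1, l_2)$ with $\sss(l_i) = x_i$ to $(\ttt(l_1), \mu_{22}(l_2)^{-1}\mu_{12}(l_1), \ttt(l_2))$; this is precisely the map \eqref{nvwikwcm}, which is a surjective submersion by hypothesis.

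For the compatibility of 1-shifted Lagrangian structures, recall that the structures on the strong and homotopy fibre products are $\pr_{\L_1}^*\gamma_1 + \pr_{\L_2}^*\gamma_2$ and $\pr_{\L_1}^*\gamma_1 + \pr_{\L_2}^*\gamma_2 - \pr_{\G_2\arr}^*\omega_2$, respectively. Since $\iota$ preserves the projections to $\L_1$ and $\L_2$ and routes the $\G_2\arr$-coordinate through the identity section $\uuu$, and since $\uuu^*\omega_2 = 0$ for a multiplicative 2-form on a Lie groupoid, the pullback of the homotopy structure along $\iota$ agrees strictly with the strong structure; the moment maps to $\G_1 \times \G_3^-$ also agree strictly along $\iota$. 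To fit this into the weak-equivalence template \eqref{msge1bqw}, one upgrades $\iota$ to a Morita equivalence span, e.g.\ by taking the homotopy fibre product of $\iota$ with the identity on $\L_1 \htimes_{\G_2} \L_2$, whose two projections are Morita morphisms; transporting the 2-form equality along this span yields a weak equivalence with identity natural transformation. The main obstacle is the essential surjectivity step, where one must carefully pin down the side convention of the commuting square defining arrows in $\L_1 \htimes_{\G_2} \L_2$ so as to match \eqref{nvwikwcm}; once this is fixed, the hypothesis is exactly what is needed, and the rest is a short verification.
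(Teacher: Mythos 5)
Your proposal is correct and follows essentially the same route as the paper's proof: the canonical morphism $(x_1,x_2)\mapsto(x_1,\uuu_{\mu_{12}(x_1)},x_2)$, $(l_1,l_2)\mapsto(l_1,\uuu,l_2)$ from the strong to the homotopy fibre product, compatibility of the $1$-shifted Lagrangian structures via $\uuu^*\omega_2=0$, essential surjectivity reduced precisely to the hypothesis on \eqref{nvwikwcm} (up to the same inversion-of-the-middle-coordinate convention the paper itself elides, which is harmless since surjective submersions are preserved under groupoid inversion), and fully faithfulness by the explicit identification with the pullback groupoid. Your extra remark on upgrading the essential equivalence to a Morita span is just the interchangeability already invoked in Subsection \ref{3y574s4t}, so nothing of substance differs.
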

	
	\begin{proof}
		We have a commutative diagram
		\[
		\begin{tikzcd}[row sep={3em,between origins}, column sep={4em,between origins}]
			\L_1 \stimes_{\G_2} \L_2 \arrow{rr}{\psi} \arrow{dr} & & \L_1 \htimes_{\G_2} \L_2 \arrow{dl} \\
			& \G_1 \times \G_3^{-}, &
		\end{tikzcd}
		\]
		where
		\begin{align*}
			\psi\arr : \L_1\arr \fp{\mu_{12}}{\mu_{22}} \L_2\arr &\too \L_1\arr \fp{\mu_{12}\sss}{\sss} \G_2\arr \fp{\ttt}{\mu_{22}\sss} \L_2 \\
			(l_1, l_2) &\mtoo (l_1, \uuu_{\mu_{12}(\sss(l_1))}, l_2) \hspace{10pt} \text{and}\\
			\psi\obj : \L_1\obj \fp{\mu_{12}}{\mu_{22}} \L_2\obj &\too \L_1\obj \fp{\mu_{12}}{\sss} \G_2\arr \fp{\ttt}{\mu_{22}} \L_2\obj \\
			(x_1, x_2) &\mtoo (x_1, \uuu_{\mu_{12}(x_1)}, x_2).
		\end{align*}
		Note that $\psi^*(\pr_{\L_1}^*\gamma_1 + \pr_{\L_2}^*\gamma_2 - \pr_{\G_2}^*\omega_2) = \pr_{\L_1}^*\gamma_1 + \pr_{\L_2}^*\gamma_2$, as $\uuu^*\omega_2 = 0$.
		It suffices to check that $\psi$ is an essential equivalence.
		This amounts to checking the following.
		\begin{enumerate}[label={(\arabic*)}]
			\item\label{ijibfqjp}
			The map
			\begin{align}
				\ttt \circ \pr_{(\L_1 \htimes_{\G_2} \L_2)\arr} :
				(\L_1\obj \fp{\mu_{12}}{\mu_{22}} \L_2\obj) \fp{\psi\obj}{\sss} 
				(\L_1\arr \fp{\mu_{12}\sss}{\sss} \G_2\arr \fp{\ttt}{\mu_{22}\sss} \L_2) &\too \L_1\obj \fp{\mu_{12}}{\sss} \G_2\arr \fp{\ttt}{\mu_{22}} \L_2\obj \label{89rwy53w} \\
				((x_1, x_2), (l_1, g, l_2)) &\mtoo (\ttt(l_1), \mu_{22}(l_2) \cdot g \cdot \mu_{12}(l_1)^{-1}, \ttt(l_2)) \nonumber
			\end{align}
			is a surjective submersion.
			\item\label{dna6bfpu}
			The map $(\L_1 \stimes_{\G_2} \L_2)\arr \to (\psi\obj)^*(\L_1 \htimes_{\G_2} \L_2)$ is a diffeomorphism.
		\end{enumerate}
		To show \ref{ijibfqjp}, note that we have a map
		\begin{align*}
			\L_1\arr \fp{\mu_{12}\sss}{\mu_{22}\sss} \L_2\arr
			&\too
			(\L_1\obj \fp{\mu_{12}}{\mu_{22}} \L_2\obj) \fp{\psi\obj}{\sss} 
			(\L_1\arr \fp{\mu_{12}\sss}{\sss} \G_2\arr \fp{\ttt}{\mu_{22}\sss} \L_2) \\
			(l_1, l_2) &\mtoo ((\sss(l_1), \sss(l_2)), (l_1, \uuu_{\mu_{12}(\sss(l_1))}, l_2)),
		\end{align*}
		whose composition with \eqref{89rwy53w} is \eqref{nvwikwcm}.
		Since \eqref{nvwikwcm} is a surjective submersion, so is \eqref{89rwy53w}.
		For \ref{dna6bfpu}, we need to check that the map
		\begin{align*}
			\L_1\arr \fp{\mu_{12}}{\mu_{22}} \L_2\arr &\too (\L_1\obj \fp{\mu_{12}}{\mu_{22}} \L_2\obj) \fp{\psi\obj}{\sss} (\L_1\arr \fp{\mu_{12}\sss}{\sss} \G_2\arr \fp{\ttt}{\mu_{22}\sss} \L_2) \fp{\ttt}{\psi\obj} (\L_1\obj \fp{\mu_{12}}{\mu_{22}} \L_2\obj) \\
			(l_1, l_2) &\mtoo ((\sss(l_1), \sss(l_2)), (l_1, \uuu_{\mu_{12}(\sss(l_1))}, l_2), (\ttt(l_1), \ttt(l_2)))
		\end{align*}
		is a diffeomorphism.
		But it has an explicit inverse, given by $((x_1, x_2), (l_1, g, l_2), (y_1, y_2)) \mto (l_1, l_2)$.
	\end{proof}

	\section{The 1-shifted Weinstein symplectic category}\label{Section: Shifted}
	
	We begin this section with a rough overview of the Weinstein symplectic ``category" and its Wehrheim--Woodward completion. This gives context for our subsequent definition of the $1$-shifted Weinstein symplectic ``category" $\WSQ$. Using the basic approach of Wehrheim--Woodward to the Weinstein symplectic category, we complete $\WSQ$ to a symmetric monoidal category $\WS$. One may work in the smooth or holomorphic categories, as with the previous section.
	
	\subsection{The Weinstein symplectic ``category"}
	Two morphisms in a category can be composed if and only if the source of one coincides with the target of the other. By weakening this to a necessary condition for composing morphisms, one obtains the definition of a \defn{``category''}.\footnote{Some authors would instead call this a \emph{precategory}.} Two morphisms in a ``category" are called \defn{composable} if their composition is defined. A prominent instance of this discussion is Weinstein's symplectic ``category'' \cite{wei:10}; its objects are symplectic manifolds, and its morphisms are Lagrangian correspondences. While one can compose two Lagrangian correspondences as relations between sets, the result need not be a Lagrangian correspondence. These correspondences are called composable if they satisfy transversality conditions sufficient to ensure that their set-theoretic composition is a Lagrangian correspondence.     
	While the Weinstein symplectic ``category'' is not a genuine category, Wehrheim--Woodward \cite{weh-woo:10,weh:16} show that it can be completed into one by defining morphisms as sequences of composable morphisms up to a certain equivalence relation. We implement a similar approach for the 1-shifted version of the Weinstein symplectic ``category".
	
	\subsection{The 1-shifted Weinstein symplectic ``category''.}
	We define a 1-shifted version of Weinstein's symplectic ``category'', denoted $\WSQ$, as follows.
	Quasi-symplectic groupoids constitute the objects of $\WSQ$. A morphism from $\G_1$ to $\G_2$ in $\WSQ$ is a weak equivalence class of 1-shifted Lagrangian correspondences from $\G_1$ to $\G_2$. 
%
	We say that two morphisms
	\[
	\begin{tikzcd}[row sep={2em,between origins},column sep={4em,between origins}]
		& \L_1 \arrow{dl} \arrow{dr} & & \L_2 \arrow{dl}\arrow{dr} & \\
		\G_1 & & \G_2 & & \G_3
	\end{tikzcd}
	\]
	in $\WSQ$ are \defn{composable} if their homotopy fibre product $\L_1 \htimes_{\G_2} \L_2$ is transverse.
	In this case, we define the composition $\L_2 \circ \L_1$ as the weak equivalence class of $\L_1 \htimes_{\G_2} \L_2$.
	Proposition \ref{eiptq550} implies that this morphism does not depend on the representatives chosen for the weak equivalence classes being composed.
	
	Let $\G$ be a quasi-symplectic groupoid. The identity morphism $\Id_\G:\G\longrightarrow\G$ is the canonical 1-shifted Lagrangian correspondence $\G \longleftarrow \G \longrightarrow \G$ (Example \ref{c00ufvui}). The content of this statement is that every morphism $\L : \G \longrightarrow \H$ in $\WSQ$ is composable with $\Id_\G$ and $\Id_\H$, and $\L \circ \Id_\G \simeq \L \simeq \Id_\H \circ \L$; see Lemma \ref{rw45fz6q}.
	
	\subsection{Extension of $\WSQ$ to a symmetric monoidal category}
	One may extend $\WSQ$ to a symmetric monoidal category $\WS$ in two ways. The first is a manifold-theoretic extension afforded by Wehrheim--Woodward; see \cite[Section 2]{weh-woo:10} and \cite[Definition 2.2.2]{weh:16}. The second, due to Calaque \cite{cal:15}, is algebro-geometric; one replaces algebraic quasi-symplectic groupoids by their associated 1-shifted symplectic stacks, and uses derived fibre products to form a symmetric monoidal category. 

The precise way in which we extend $\WSQ$ to a category $\WS$ is irrelevant since all computations will be done on composable morphisms in $\WSQ$.
We therefore only need to prove the existence of an extension.
We do this in the differential-geometric context by adapting the Wehrheim--Woodward approach, as we now explain.
	
	The Wehrheim--Woodward approach is as follows.
	An object of $\WS$ is a quasi-symplectic groupoid.
	A morphism from $\G$ to $\G'$ is a sequence of quasi-symplectic groupoids $\G_0, \G_1, \ldots, \G_r$ with $\G_0 = \G$ and $\G_r = \G'$, together with 1-shifted Lagrangian correspondences $\G_{i-1} \longleftarrow \L_i \longrightarrow \G_{i}$ for $i = 1, \ldots, r$, up to the equivalence relation generated by
	\begin{equation}\label{kdi2q630}
		\left(
		\begin{tikzcd}[row sep={1.5em,between origins},column sep={2em,between origins}]
			& & \L_i \arrow{ddl} \arrow{ddr} & & \L_{i+1} \arrow{ddl} \arrow{ddr} & & \\
			\cdots & & & & & & \cdots \\
			& \G_{i-1} & & \G_{i} & & \G_{i+1} & 
		\end{tikzcd}
		\right)
		\quad\sim\quad
		\left(
		\begin{tikzcd}[row sep={1.5em,between origins},column sep={2em,between origins}]
			& & \L_i \htimes_{\G_i} \L_{i+1} \arrow{ddl} \arrow{ddr} & & \\
			\cdots & & & & \cdots \\
			& \G_{i-1} & & \G_{i+1} &
		\end{tikzcd}
		\right)
	\end{equation}
	if $\L_i \htimes_{\G_i} \L_{i+1}$ is transverse and
	\begin{equation}\label{m4aup4gt}
		\left(
		\begin{tikzcd}[row sep={1.5em,between origins},column sep={2em,between origins}]
			& & \L_i \arrow{ddl} \arrow{ddr} & & \\
			\cdots & & & & \cdots \\
			& \G_{i-1} & & \G_{i} & 
		\end{tikzcd}
		\right)
		\quad\sim\quad
		\left(
		\begin{tikzcd}[row sep={1.5em,between origins},column sep={2em,between origins}]
			& & \L_i' \arrow{ddl} \arrow{ddr} & & \\
			\cdots & & & & \cdots \\
			& \G_{i-1} & & \G_{i} & 
		\end{tikzcd}
		\right)
	\end{equation}
	if $\L_i$ and $\L_i'$ are weakly equivalent.
	As in \cite[Section 2]{weh-woo:10}, this forms a category with composition given by the concatenation of sequences.
	
	The category $\WS$ turns out to carry a symmetric monoidal structure \begin{equation}\label{k8avzbtp}
		\otimes : \WS \times \WS \too \WS.
	\end{equation}
	It is given by the Cartesian product on the level of objects.
	For two morphisms
	\[
	\left[
	\begin{tikzcd}[row sep={2em,between origins},column sep={2em,between origins}]
		& \L_1 \arrow{dl} \arrow{dr} & & \L_2 \arrow{dl} \arrow{dr} & & \L_r \arrow{dl} \arrow{dr} & \\
		\G_0 & & \G_1 & & \cdots & & \G_r
	\end{tikzcd}
	\right]
	\quad\text{and}\quad
	\left[
	\begin{tikzcd}[row sep={2em,between origins},column sep={2em,between origins}]
		& \M_1 \arrow{dl} \arrow{dr} & & \M_2 \arrow{dl} \arrow{dr} & & \M_s \arrow{dl} \arrow{dr} & \\
		\H_0 & & \H_1 & & \cdots & & \H_s
	\end{tikzcd}
	\right],
	\]
	one takes the tensor product as follows. Augment the morphism of smallest length with identity morphisms on its right, until to its length matches that of other morphism. Proceed to take Cartesian products of quasi-symplectic groupoids $\G_i \times \H_i$ and 1-shifted Lagrangians $\L_i \times \M_i$. Using the more concise notation $[\L_1, \ldots, \L_r]$ and $[\M_1, \ldots, \M_s]$ for the morphisms, assuming $r \le s$, and using juxtaposition to indicate Cartesian products, one has
	
	\begin{equation}\label{Equation: Tensor}
	[\L_1, \ldots, \L_r] \otimes [\M_1, \ldots, \M_s] = [\L_1 \M_1, \ldots, \L_r\M_r, \G_r\M_{r+1}, \ldots, \G_r\M_s].
	\end{equation}
	
	\begin{lemma}
	The right-hand side of \eqref{Equation: Tensor} does not depend on the representatives chosen for the two morphisms being composed.
	\end{lemma}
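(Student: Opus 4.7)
The plan is to reduce the statement to checking invariance under each of the generating moves \eqref{kdi2q630} and \eqref{m4aup4gt} of the equivalence relation on sequences. By symmetry, it suffices to verify invariance when an elementary move is applied to the first factor $[\L_1, \ldots, \L_r]$, with $[\M_1, \ldots, \M_s]$ held fixed; the same argument handles changes in the second factor.

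The case of move \eqref{m4aup4gt}, where some $\L_i$ is replaced by a weakly equivalent $\L_i'$, is straightforward: taking Cartesian products with the fixed factor $\M_i$ (or with $\G_r$ when $i > r$) preserves weak equivalences, since if $\L_i \longleftarrow \M \longrightarrow \L_i'$ realises a weak equivalence with structure $2$-form $\gamma$ on $\M\obj$, then $\L_i \times \M_i \longleftarrow \M \times \M_i \longrightarrow \L_i' \times \M_i$ realises a weak equivalence with the pullback of $\gamma$. The two tensor products thus differ by a single application of \eqref{m4aup4gt} and are equivalent.

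The case of move \eqref{kdi2q630}, where consecutive entries $\L_i, \L_{i+1}$ are replaced by their transverse composition $\L_i \htimes_{\G_i} \L_{i+1}$, is the main point. A direct comparison shows that the two tensor products have the same length, but the positions of $\M_{i+1}, \ldots, \M_s$ are shifted by one relative to the $\L$-entries; they therefore cannot be connected by a single elementary move. To circumvent this, I will insert an identity correspondence $\uuu_{\H_i}:=\H_i \longleftarrow \H_i \longrightarrow \H_i$ into the second sequence between $\M_i$ and $\M_{i+1}$, producing
\[
[\M_1, \ldots, \M_i, \uuu_{\H_i}, \M_{i+1}, \ldots, \M_s],
\]
which by Lemma \ref{rw45fz6q} and a composition move is equivalent to $[\M_1, \ldots, \M_s]$. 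Forming the tensor product of the unchanged first sequence $[\L_1, \ldots, \L_r]$ with this extension yields a sequence whose positions $i$ and $i+1$ are $\L_i \times \M_i$ and $\L_{i+1} \times \uuu_{\H_i}$. These are composable: the required homotopy fibre product factors as $(\L_i \htimes_{\G_i} \L_{i+1}) \times (\M_i \htimes_{\H_i} \uuu_{\H_i})$, whose first factor is transverse by hypothesis and whose second is transverse by Lemma \ref{rw45fz6q}. Applying this composition, and then using Lemma \ref{rw45fz6q} once more to replace $\M_i \htimes_{\H_i} \uuu_{\H_i}$ by $\M_i$ up to weak equivalence, produces exactly the tensor product of the post-move first sequence $[\L_1, \ldots, \L_{i-1}, \L_i \htimes_{\G_i} \L_{i+1}, \L_{i+2}, \ldots, \L_r]$ with the original $[\M_1, \ldots, \M_s]$.

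The main obstacle is the index misalignment between the two tensor products produced by a composition move: they share a length but pair their entries differently with the $\M_j$'s. The key idea is to insert an identity correspondence to realign the indices, which is legitimate because identity correspondences are transverse to everything and act as the identity up to weak equivalence, by Lemma \ref{rw45fz6q}.
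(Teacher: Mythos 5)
Your reduction to the two generating moves, and your treatment of the weak--equivalence move \eqref{m4aup4gt}, are fine; the factorization of the relevant homotopy fibre product across Cartesian products and your appeal to Lemma \ref{rw45fz6q} are also correct. The problem is the logical structure of your argument for the composition move \eqref{kdi2q630}. Writing $\mathbf{L}=[\L_1,\ldots,\L_r]$, $\mathbf{L}'=[\L_1,\ldots,\L_i\circ\L_{i+1},\ldots,\L_r]$, $\mathbf{M}=[\M_1,\ldots,\M_s]$, and $\mathbf{M}'=[\M_1,\ldots,\M_i,\uuu_{\H_i},\M_{i+1},\ldots,\M_s]$, what you actually prove is $\mathbf{L}\otimes\mathbf{M}'\sim\mathbf{L}'\otimes\mathbf{M}$. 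To conclude $\mathbf{L}\otimes\mathbf{M}\sim\mathbf{L}'\otimes\mathbf{M}$ you still need $\mathbf{L}\otimes\mathbf{M}\sim\mathbf{L}\otimes\mathbf{M}'$, and this does not follow from your observation that $\mathbf{M}'\sim\mathbf{M}$ as morphisms: invariance of the right-hand side of \eqref{Equation: Tensor} under a change of representative of the second factor is precisely (an instance of) the statement being proved, so invoking it here is circular. Nor is the missing equivalence a one-move matter: $\mathbf{L}\otimes\mathbf{M}$ and $\mathbf{L}\otimes\mathbf{M}'$ differ not only at position $i+1$ but in every pairing downstream ($\L_{i+1}\M_{i+1},\L_{i+2}\M_{i+2},\ldots$ versus $\L_{i+1}\H_i,\L_{i+2}\M_{i+1},\ldots$, plus the shifted padding), so your single identity insertion realigns the indices only locally at position $i$ and leaves the same global misalignment you set out to remove.

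The way to close the gap is to work entirely on the output sequences, as the paper does: starting from $\mathbf{L}'\otimes\mathbf{M}$, un-compose each entry from position $i$ onward, writing $(\L_i\circ\L_{i+1})\M_i$ as the pair $\L_i\M_i,\ \L_{i+1}\H_i$ and each $\L_{j+1}\M_j$ as the pair $\G_j\M_j,\ \L_{j+1}\H_j$ (each such expansion is an application of \eqref{kdi2q630} read backwards together with \eqref{m4aup4gt}, using Lemma \ref{rw45fz6q} factorwise), and then re-associate the resulting telescope by composing the complementary pairs $\L_{j+1}\H_j,\ \G_{j+1}\M_{j+1}$ into $\L_{j+1}\M_{j+1}$; this recovers $\mathbf{L}\otimes\mathbf{M}$. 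Your local computation at position $i$ is exactly one rung of this telescope, but without propagating the insertion-and-regrouping down the whole sequence the argument is incomplete.
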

	
	\begin{proof}
	The right-hand side of \eqref{Equation: Tensor} it clearly invariant under \eqref{m4aup4gt}.
	To establish invariance under \eqref{kdi2q630}, let $1 \le i < r$ be such that $\L_i$ and $\L_{i+1}$ are transverse. It follows that
	\[
	[\L_1, \ldots, \L_i, \L_{i+1}, \ldots, \L_r]
	=
	[\L_1, \ldots, \L_i \circ \L_{i+1}, \ldots, \L_r].
	\]
	We need to check that both presentations of this morphism yield the same tensor product with $[\M_1, \ldots, \M_s]$, i.e.\ that
	\begin{align}
		&\quad [\L_1\M_1, \ldots, \L_i\M_i, \L_{i+1}\M_{i+1}, \ldots, \L_r\M_r, \G_r\M_{r+1}, \ldots, \G_r\M_s] \label{lght76j5} \\
		&= [\L_1\M_1, \ldots, (\L_i\circ\L_{i+1})\M_i, \L_{i+2}\M_{i+1}, \ldots, \L_r\M_{r-1}, \G_r\M_r, \ldots, \G_r\M_s]. \label{harcc7ph}
	\end{align}
	
	By inserting identities, \eqref{harcc7ph} is equal to
	\[
	[\L_1\M_1, \ldots, \underbrace{\L_i\M_i, \L_{i+1}\H_i}_{(\L_i \circ \L_{i+1})\M_i}, \underbrace{\G_{i+1}\M_{i+1}, \L_{i+2}\H_{i+1}}_{\L_{i+2}\M_{i+1}}, \ldots, \underbrace{\G_{r-1}\M_{r-1}, \L_r\H_{r-1}}_{\L_r\M_{r-1}}, \G_r\M_r, \ldots, \G_r\M_s].
	\]
	Subsequently performing composition in the other set of pairs
	\[
	[\L_1\M_1, \ldots, \L_i\M_i, \underbrace{\L_{i+1}\H_i, \G_{i+1}\M_{i+1}}_{\L_{i+1}\M_{i+1}}, \underbrace{\L_{i+2}\H_{i+1}, \ldots}_{\L_{i+2}\M_{i+2}} \underbrace{\ldots, \G_{r-1}\M_{r-1}}_{\L_{r-1}\M_{r-1}}, \underbrace{\L_r\H_{r-1}, \G_r\M_r}_{\L_r\M_r}, \ldots, \G_r\M_s],
	\]
	we get back \eqref{lght76j5}.
	This completes the proof.
	\end{proof}
	
	The unit object in $\WS$ is a point $\star$, viewed as a quasi-symplectic groupoid. Given quasi-symplectic groupoids $\G$, $\H$, and $\I$, the associator $\alpha_{\G,\H,\I} : \G \times (\H \times \I) \longrightarrow (\G \times \H) \times \I$ is the groupoid $\G \times \H \times \I$, regarded as a 1-shifted Lagrangian correspondence from $\G \times (\H \times \I)$ to $(\G \times \H) \times \I$.
	The left identity $\lambda_\G : \star \times \G \longrightarrow \G$ is $\G$, as a 1-shifted Lagrangian correspondence. A similar description applies for the right identity $\rho_\G : \G \times \star \longrightarrow \G$.
	
	The preceding discussion makes it clear that \eqref{k8avzbtp} is a monoidal structure on $\WS$.
	To address the symmetric structure, let $\G$ and $\H$ be quasi-symplectic groupoids. The braiding $\G \times \H \longrightarrow \H \times \G$ is the groupoid $\G \times \H$ with trivial 2-form on its base and the obvious morphisms to $\G \times \H$ and $\H \times \G$.
	The axioms of a symmetric monoidal category are immediate.
	
	\section{TQFTs valued in the 1-shifted Weinstein symplectic category}\label{90msjexi}

	This section is largely concerned with proving Main Theorem \ref{Theorem: Main 1}. We begin by recalling the equivalence between $2$-dimensional TQFTs in a symmetric monoidal category and commutative Frobenius objects in the same category. This framework allows us to prove that every abelian symplectic groupoid determines a commutative Frobenius object in $\WS$, whose product is induced by groupoid multiplication. To extend this result to a larger class of quasi-symplectic groupoids, we call a quasi-symplectic groupoid abelianizable if it Morita equivalent to an abelian symplectic groupoid. We then use a notion of Morita transfer to prove that every abelianizable quasi-symplectic groupoid determines a commutative Frobenius object in $\WS$. To conclude, we prove that a quasi-symplectic groupoid admitting an admissible global slice is necessarily abelianizable. All constructions work over $\mathbb{R}$ or $\mathbb{C}$, as in the previous two sections.   
	
	\subsection{2-dimensional TQFTs and commutative Frobenius objects}
	Let us first recall the equivalence between 2-dimensional TQFTs and commutative Frobenius objects. A standard reference for this material is the book \cite{koc:04}; see also \cite[Section 2]{kho-ost-kon:22}, \cite{eti-gel-nik-ost:15}, and \cite{sch:09}.

Let $\Cob_2$ be the category of 2-dimensional cobordisms, i.e.\ objects are compact 1-dimensional oriented manifolds and morphisms are cobordisms between them.
	Suppose that $(\mathbf{C}, \otimes, I, B)$ is a symmetric monoidal category, where $\otimes$ is the monoidal product, $I$ is the unit object, and $B$ is the brading.
	A \defn{2-dimensional topological quantum field theory valued in $\mathbf{C}$} is a symmetric monoidal functor
	\[
	\Cob_2 \too \mathbf{C}.
	\]
	Such a functor is more easily described via the notion of commutative Frobenius object in $\mathbf{C}$, as we now recall. One finds that $\Cob_2$ is generated by the six morphisms
	\[
	\begin{tikzpicture}[
		baseline=-2.5pt,
		every tqft/.append style={
			transform shape, rotate=90, tqft/circle x radius=4pt,
			tqft/circle y radius= 2pt,
			tqft/boundary separation=0.6cm, 
			tqft/view from=incoming,
		}
		]
		\pic[
		tqft/cap,
		name=d,
		every incoming lower boundary component/.style={draw},
		every outgoing lower boundary component/.style={draw},
		every incoming boundary component/.style={draw},
		every outgoing boundary component/.style={draw},
		cobordism edge/.style={draw},
		cobordism height= 1cm,
		];
		\draw (d-outgoing boundary)+(-0.12,-1) node {$\eta$};
	\end{tikzpicture}
	\qquad
	\begin{tikzpicture}[
		baseline=6pt,
		every tqft/.append style={
			transform shape, rotate=90, tqft/circle x radius=4pt,
			tqft/circle y radius= 2pt,
			tqft/boundary separation=0.6cm, tqft/view from=incoming,
		}
		]
		\pic[
		tqft/reverse pair of pants,
		name=d,
		every incoming lower boundary component/.style={draw},
		every outgoing lower boundary component/.style={draw},
		every incoming boundary component/.style={draw},
		every outgoing boundary component/.style={draw},
		cobordism edge/.style={draw},
		cobordism height= 1cm,
		];
		\draw (d-outgoing boundary)+(-0.5,-1) node {$\mu$};
	\end{tikzpicture}
	\qquad
	\begin{tikzpicture}[
		baseline=-3pt,
		every tqft/.append style={
			transform shape, rotate=90, tqft/circle x radius=4pt,
			tqft/circle y radius= 2pt,
			tqft/boundary separation=0.6cm, tqft/view from=incoming,
		}
		]
		\pic[
		tqft/cylinder,
		name=d,
		every incoming lower boundary component/.style={draw},
		every outgoing lower boundary component/.style={draw},
		every incoming boundary component/.style={draw},
		every outgoing boundary component/.style={draw},
		cobordism edge/.style={draw},
		cobordism height= 1cm,
		];
		\draw (d-outgoing boundary)+(-0.5,-1) node {$\iota$};
	\end{tikzpicture}
	\qquad
	\begin{tikzpicture}[
		baseline=-3pt,
		every tqft/.append style={
			transform shape, rotate=90, tqft/circle x radius=4pt,
			tqft/circle y radius= 2pt,
			tqft/boundary separation=0.6cm, tqft/view from=incoming,
		}
		]
		\pic[
		tqft/pair of pants,
		name=d,
		every incoming lower boundary component/.style={draw},
		every outgoing lower boundary component/.style={draw},
		every incoming boundary component/.style={draw},
		every outgoing boundary component/.style={draw},
		cobordism edge/.style={draw},
		cobordism height= 1cm,
		];
		\draw (d-outgoing boundary)+(-0.5,-0.75) node {$\delta$};
	\end{tikzpicture}
	\qquad
	\begin{tikzpicture}[
		baseline=-2.5pt,
		every tqft/.append style={
			transform shape, rotate=90, tqft/circle x radius=4pt,
			tqft/circle y radius=2pt,
			tqft/boundary separation=0.6cm, 
			tqft/view from=incoming,
		}
		]
		\pic[
		tqft/cup,
		name=d,
		every incoming lower boundary component/.style={draw},
		every outgoing lower boundary component/.style={draw},
		every incoming boundary component/.style={draw},
		every outgoing boundary component/.style={draw},
		cobordism edge/.style={draw},
		cobordism height=1cm,
		];
		\draw (d-incoming boundary)+(0.12,-1) node {$\epsilon$};
	\end{tikzpicture}
	\qquad
	\begin{tikzpicture}[
		baseline=5pt,
		every tqft/.append style={
			transform shape, rotate=90, tqft/circle x radius=4pt,
			tqft/circle y radius= 2pt,
			tqft/cobordism height=1cm, tqft/view from=incoming,
			tqft/boundary separation=1cm,
		}
		]
		\pic [
		tqft/cylinder to next,
		name=d,
		every incoming lower boundary component/.style={draw},
		every outgoing lower boundary component/.style={draw},
		cobordism edge/.style={draw}, 
		every incoming boundary component/.style={draw},
		every outgoing boundary component/.style={draw},
		];
		\pic [
		tqft/cylinder to prior,
		every incoming lower boundary component/.style={draw},
		every outgoing lower boundary component/.style={draw},
		every incoming boundary component/.style={draw},
		every outgoing boundary component/.style={draw},
		cobordism edge/.style={draw},
		at={($(d-incoming boundary)+(0, 0.5)$)}];
		\draw (d-outgoing boundary)+(-0.5,-1.25) node {$\tau$};
	\end{tikzpicture}
	\]
	read from left to right (e.g.\ $\eta : \emptyset \longrightarrow S^1$), subject to the relations
	\begin{align}
		&\begin{tikzpicture}[
			every tqft/.append style={
				transform shape, rotate=90, tqft/circle x radius=4pt,
				tqft/circle y radius=2pt,
				tqft/boundary separation=0.5cm, tqft/view from=incoming,
			}
			]
			\pic[
			tqft/cap,
			name=a,
			cobordism height=1cm,
			every incoming boundary component/.style={draw},
			every outgoing boundary component/.style={draw},
			cobordism edge/.style={draw},
			];
			\pic[
			tqft/cylinder,
			name=b,
			cobordism height=0.75cm,
			every incoming boundary component/.style={draw},
			every outgoing boundary component/.style={draw},
			cobordism edge/.style={draw},
			at={($ (a-outgoing boundary)+(-0.75,0.5) $) },
			];
			\pic[
			tqft/reverse pair of pants,
			name=c,
			cobordism height=0.75cm,
			every incoming boundary component/.style={draw},
			every outgoing boundary component/.style={draw},
			cobordism edge/.style={draw},
			at={($ (a-outgoing boundary)+(0,0) $) },
			];
			\draw (c-outgoing boundary)+(0.5,0)  node {\textup{=}};
			\pic[
			tqft/cylinder,
			name=e,
			cobordism height=0.75cm,
			every incoming boundary component/.style={draw},
			every outgoing boundary component/.style={draw},
			cobordism edge/.style={draw},
			at={($ (c-outgoing boundary)+(1,0) $) },
			];
			\draw (c-outgoing boundary)+(2.25,0)  node {\textup{=}};
			\pic[
			tqft/cap,
			name=ar,
			cobordism height=1cm,
			every incoming boundary component/.style={draw},
			every outgoing boundary component/.style={draw},
			cobordism edge/.style={draw},
			at={($(4.25, 0.5)$)},
			];
			\pic[
			tqft/cylinder,
			name=br,
			cobordism height=0.75cm,
			every incoming boundary component/.style={draw},
			every outgoing boundary component/.style={draw},
			cobordism edge/.style={draw},
			at={($ (ar-outgoing boundary)+(-0.75,-0.5) $) },
			];
			\pic[
			tqft/reverse pair of pants,
			name=cr,
			cobordism height=0.75cm,
			every incoming boundary component/.style={draw},
			every outgoing boundary component/.style={draw},
			cobordism edge/.style={draw},
			at={($ (ar-outgoing boundary)+(0,-0.5) $) },
			];
		\end{tikzpicture}
		\label{wzw7o5ta}\\
		&\begin{tikzpicture}[
			every tqft/.append style={
				transform shape, rotate=90, tqft/circle x radius=4pt,
				tqft/circle y radius=2pt,
				tqft/boundary separation=0.5cm, tqft/view from=incoming,
			}
			]
			\pic[
			tqft/pair of pants,
			name=a,
			cobordism height=0.75cm,
			every incoming boundary component/.style={draw},
			every outgoing boundary component/.style={draw},
			cobordism edge/.style={draw},
			];
			\pic[
			tqft/cylinder,
			name=b,
			cobordism height=0.75cm,
			every incoming boundary component/.style={draw},
			every outgoing boundary component/.style={draw},
			cobordism edge/.style={draw},
			at={($ (a-outgoing boundary)+(0,0.5)$) },
			];
			\pic[
			tqft/cup,
			name=c,
			cobordism height=1cm,
			every incoming boundary component/.style={draw},
			every outgoing boundary component/.style={draw},
			cobordism edge/.style={draw},
			at={($ (a-outgoing boundary)+(0,0)$) },
			];
			\draw (b-outgoing boundary)+(0.5,-0.25) node {\textup{=}};
			\pic[
			tqft/cylinder,
			name=e,
			cobordism height=0.75cm,
			every incoming boundary component/.style={draw},
			every outgoing boundary component/.style={draw},
			cobordism edge/.style={draw},
			at={($ (c-outgoing boundary)+(0.75,-0.25) $) },
			];
			\draw (e-outgoing boundary)+(0.5,0) node {\textup{=}};
			\pic[
			tqft/pair of pants,
			name=ar,
			cobordism height=0.75cm,
			every incoming boundary component/.style={draw},
			every outgoing boundary component/.style={draw},
			cobordism edge/.style={draw},
			at={($ (c-outgoing boundary)+(2.5,-0.25)$) },
			];
			\pic[
			tqft/cylinder,
			name=br,
			cobordism height=0.75cm,
			every incoming boundary component/.style={draw},
			every outgoing boundary component/.style={draw},
			cobordism edge/.style={draw},
			at={($ (ar-outgoing boundary)+(0,0)$) },
			];
			\pic[
			tqft/cup,
			name=cr,
			cobordism height=1cm,
			every incoming boundary component/.style={draw},
			every outgoing boundary component/.style={draw},
			cobordism edge/.style={draw},
			at={($ (ar-outgoing boundary)+(0,0.5)$) },
			];
		\end{tikzpicture}
		\label{xn8lyg5e}\\
		&\begin{tikzpicture}[
			every tqft/.append style={
				transform shape, rotate=90, tqft/circle x radius=4pt,
				tqft/circle y radius=2pt,
				tqft/boundary separation=0.5cm, tqft/view from=incoming,
			}
			]
			\pic[
			tqft/cylinder to prior,
			name=a,
			cobordism height=0.75cm,
			every incoming boundary component/.style={draw},
			every outgoing boundary component/.style={draw},
			cobordism edge/.style={draw},
			boundary separation=1cm,
			at={($ (c-outgoing boundary)+(-0.75,0.75) $)},
			];
			\pic[
			tqft/cylinder to next,
			name=b,
			cobordism height=0.75cm,
			every incoming boundary component/.style={draw},
			every outgoing boundary component/.style={draw},
			cobordism edge/.style={draw},
			boundary separation=1cm,
			at={($ (c-outgoing boundary)+(-0.75,0.25) $) },
			];
			\pic[
			tqft/reverse pair of pants,
			name=c,
			cobordism height=0.75cm,
			every incoming boundary component/.style={draw},
			every outgoing boundary component/.style={draw},
			cobordism edge/.style={draw},
			at={($ (a-outgoing boundary)+(0,0) $) },
			];
			\draw (c-outgoing boundary)+(0.5,0)  node {\textup{=}};
			\pic[
			tqft/reverse pair of pants,
			name=d,
			cobordism height=0.75cm,
			every incoming boundary component/.style={draw},
			every outgoing boundary component/.style={draw},
			cobordism edge/.style={draw},
			at={($(c-outgoing boundary)+(1,-0.25)$) },
			];
		\end{tikzpicture}
		\label{xbfqd14f}\\
		&\begin{tikzpicture}[
			every tqft/.append style={
				transform shape, rotate=90, tqft/circle x radius=4pt,
				tqft/circle y radius=2pt,
				tqft/boundary separation=0.5cm, tqft/view from=incoming,
			}
			]
			\pic[
			tqft/pair of pants,
			name=a,
			cobordism height=0.75cm,
			every incoming boundary component/.style={draw},
			every outgoing boundary component/.style={draw},
			cobordism edge/.style={draw},
			at={($ (a-outgoing boundary)+(0,0) $) },
			];
			\pic[
			tqft/cylinder to prior,
			name=b,
			cobordism height=0.75cm,
			every incoming boundary component/.style={draw},
			every outgoing boundary component/.style={draw},
			cobordism edge/.style={draw},
			boundary separation=1cm,
			at={($ (a-outgoing boundary)+(0,0.5) $)},
			];
			\pic[
			tqft/cylinder to next,
			name=c,
			cobordism height=0.75cm,
			every incoming boundary component/.style={draw},
			every outgoing boundary component/.style={draw},
			cobordism edge/.style={draw},
			boundary separation=1cm,
			at={($ (a-outgoing boundary)+(0,0) $) },
			];
			\draw (b-outgoing boundary)+(0.5,0.25)  node {\textup{=}};
			\pic[
			tqft/pair of pants,
			name=d,
			cobordism height=0.75cm,
			every incoming boundary component/.style={draw},
			every outgoing boundary component/.style={draw},
			cobordism edge/.style={draw},
			at={($(c-outgoing boundary)+(1,-0.25)$) },
			];
		\end{tikzpicture}
		\label{3p922agk}\\
		&\begin{tikzpicture}[
			every tqft/.append style={
				transform shape, rotate=90, tqft/circle x radius=4pt,
				tqft/circle y radius=2pt,
				tqft/boundary separation=0.5cm, tqft/view from=incoming,
			}
			]
			\pic[
			tqft/pair of pants,
			name=a,
			cobordism height=0.75cm,
			every incoming boundary component/.style={draw},
			every outgoing boundary component/.style={draw},
			cobordism edge/.style={draw},
			];
			\pic[
			tqft/cylinder to prior,
			name=b,
			cobordism height=0.75cm,
			every incoming boundary component/.style={draw},
			every outgoing boundary component/.style={draw},
			cobordism edge/.style={draw},
			at={($(a-outgoing boundary)+(-0.75,-0.25) $) },
			];
			\pic[
			tqft/cylinder to prior,
			name=c,
			cobordism height=0.75cm,
			every incoming boundary component/.style={draw},
			every outgoing boundary component/.style={draw},
			cobordism edge/.style={draw},
			at={($ (a-outgoing boundary)+(0,0.5) $) },
			];
			\pic[
			tqft/reverse pair of pants,
			name=d,
			cobordism height=0.75cm,
			every incoming boundary component/.style={draw},
			every outgoing boundary component/.style={draw},
			cobordism edge/.style={draw},
			at={($ (a-outgoing boundary)+(0,-0.5) $) },
			];
			\draw (c-outgoing boundary)+(0.5,-0.25)  node {\textup{=}};
			\pic[
			tqft/reverse pair of pants,
			name=e,
			cobordism height=0.75cm,
			every incoming boundary component/.style={draw},
			every outgoing boundary component/.style={draw},
			cobordism edge/.style={draw},
			at={($(a-outgoing boundary)+(1.75,-0.25) $) },
			];
			\pic[
			tqft/pair of pants,
			name=f,
			cobordism height=0.75cm,
			every incoming boundary component/.style={draw},
			every outgoing boundary component/.style={draw},
			cobordism edge/.style={draw},
			at={($(e-outgoing boundary)+(0,0) $) },
			];
			\draw (f-outgoing boundary)+(0.5,0.25)  node {\textup{=}};
			\pic[
			tqft/pair of pants,
			name=ar,
			cobordism height=0.75cm,
			every incoming boundary component/.style={draw},
			every outgoing boundary component/.style={draw},
			cobordism edge/.style={draw},
			at={($(f-outgoing boundary)+(1,0) $) },
			];
			\pic[
			tqft/cylinder to next,
			name=br,
			cobordism height=0.75cm,
			every incoming boundary component/.style={draw},
			every outgoing boundary component/.style={draw},
			cobordism edge/.style={draw},
			at={($(ar-outgoing boundary)+(-0.75,0.75) $) },
			];
			\pic[
			tqft/reverse pair of pants,
			name=cr,
			cobordism height=0.75cm,
			every incoming boundary component/.style={draw},
			every outgoing boundary component/.style={draw},
			cobordism edge/.style={draw},
			at={($(br-outgoing boundary)+(0,-0.5) $) },
			];
			\pic[
			tqft/cylinder to next,
			name=cr,
			cobordism height=0.75cm,
			every incoming boundary component/.style={draw},
			every outgoing boundary component/.style={draw},
			cobordism edge/.style={draw},
			at={($(ar-outgoing boundary)+(0,0) $) },
			];
		\end{tikzpicture}
		\label{vjvmwnz4}
	\end{align}
	and additional relations capturing the fact that the cylinder $\iota$ is the identity.
	 A 2-dimensional topological quantum field theory valued $\mathbf{C}$ is thereby equivalent to the following: a choice of object $X \in \mathbf{C}$ and four morphisms
	\begin{align*}
		X_\eta     &: I \to X && \text{(unit)},\\
		X_\mu      &: X \otimes X \to X && \text{(multiplication)}, \\
		X_\delta   &: X \to X \otimes X && \text{(comultiplication)}, \\
		X_\epsilon &: X \to I && \text{(counit)},
	\end{align*}
	subject to the relations indicated by \eqref{wzw7o5ta}--\eqref{vjvmwnz4}, where the cylinder $\iota$ is mapped to the identity $\Id_X : X \longrightarrow X$ and the twist $\tau$ is mapped to the braiding $B_X : X \otimes X \longrightarrow X \otimes X$ \cite[Theorem 3.6.17]{koc:04}.
	For example, \eqref{xbfqd14f} translates to the requirement that $X_\mu \circ B_X = X_\mu$. 
	A tuple $(X,X_{\eta},X_{\mu},X_{\delta},X_{\epsilon})$ of an object $X$ in $\mathbf{C}$ and morphisms $X_\eta, X_\mu, X_\delta, X_\epsilon$ satisfying the analogues of \eqref{wzw7o5ta}--\eqref{vjvmwnz4} in $\mathbf{C}$ is called a \defn{commutative Frobenius object in $\mathbf{C}$}.

	\subsection{The case of abelian symplectic groupoids}
	The following notion will be instrumental to constructing commutative Frobenius objects in $\WS$.
	
	\begin{definition}\label{rjg7zxym}
		A groupoid is \defn{abelian} if $\sss = \ttt$ and all isotropy groups are abelian.
	\end{definition}
	
	Our strategy is to first construct a commutative Frobenius object in $\WS$ for each abelian symplectic groupoid, and subsequently use Morita transfer to get a TQFT for every abelianizable quasi-symplectic groupoid.
	
	Let $(\A, \omega) \tto N$ be an abelian symplectic groupoid.
	The multiplication morphism $\A_\mu \in \Hom_{\WS}(\A \times \A, \A)$ is given by multiplication in $\A$, i.e.\ we define $\A_\mu$ to be the set $\A * \A \coloneqq \A \fp{\sss}{\ttt} \A$ of composable arrows, together with the morphisms
	\begin{equation}\label{4rsl53np}
		\begin{tikzcd}[row sep={3em,between origins},column sep={4em,between origins}]
			& \A * \A \arrow[hook']{dl} \arrow{dr}{\mmm} & \\
			\A \times \A & & \A.
		\end{tikzcd}
	\end{equation}
	Since $\A$ is abelian, the set $\A * \A$ is a Lie groupoid over $N$.
	The multiplicativity of $\omega$ implies that $\A_\mu$ is a 1-shifted Lagrangian correspondence from $\A \times \A$ to $\A$, with respect to the trivial 2-form on $N$.
	Similarly, we define comultiplication $\A_\delta \in \Hom_{\WS}(\A, \A \times \A)$ via
	\[
	\begin{tikzcd}[row sep={3em,between origins},column sep={4em,between origins}]
		& \A * \A \arrow[swap]{dl}{\mmm} \arrow[hook]{dr} & \\
		\A & & \A \times \A.
	\end{tikzcd}
	\]
	The unit $\A_\eta \in \Hom_{\WS}(\star, \A)$ is given by the identity section of $\A$, i.e.\ we define $\A_\eta$ to be the trivial groupoid $N \tto N$, together with the morphisms
	\[
	\begin{tikzcd}[row sep={3em,between origins},column sep={4em,between origins}]
		& N \arrow{dl} \arrow[hook]{dr}{\uuu} & \\
		\star & & \A.
	\end{tikzcd}
	\]
	It follows that $\A_\eta$ is a 1-shifted Lagrangian correspondence with respect to the trivial 2-form on $N$.
	Similarly, the counit $\A_\epsilon \in \Hom_{\WS}(\A, \star)$ is the trivial groupoid over $N$, together with
	\[
	\begin{tikzcd}[row sep={3em,between origins},column sep={4em,between origins}]
		& N \arrow[hook',swap]{dl}{\uuu} \arrow{dr} & \\
		\A & & \star.
	\end{tikzcd}
	\]
	
	We now verify that $\A_\eta, \A_\mu, \A_\delta, \A_\epsilon$ satisfy the relations \eqref{wzw7o5ta}--\eqref{vjvmwnz4} defining a commutative Frobenius object in $\WS$.
	Adopt the notation $\A^{*n} \coloneqq \A * \cdots * \A$ ($n$ times) for the set of $n$-composable arrows.
	
	\begin{lemma}[Sewing in discs]\label{9cx290iq}
		Relations \eqref{wzw7o5ta} and \eqref{xn8lyg5e} hold in $\WS$ for $\A_\eta, \A_\mu, \A_\delta, \A_\epsilon$. For the first identity in \eqref{wzw7o5ta}, we get that the 1-shifted Lagrangian correspondences
		\begin{equation}\label{rrcfu3yn}
			\begin{tikzcd}[row sep={3em,between origins},column sep={4em,between origins}]
				& N \times \A \arrow[swap]{dl}{\pr_\A} \arrow[hook]{dr}{\uuu \times \Id} & & \A * \A \arrow[hook']{dl} \arrow{dr}{\mmm} & \\
				\A & & \A \times \A & & \A
			\end{tikzcd}
		\end{equation}
		are transverse, and that their composition is weakly equivalent to the identity $\A \longleftarrow \A \longrightarrow \A$.
		Similar statements hold for the other three identities.
	\end{lemma}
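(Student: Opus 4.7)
The overall strategy is to invoke Proposition \ref{blagblt1} in order to replace the homotopy fibre product in \eqref{rrcfu3yn} with the much more tractable strong fibre product $(N \times \A) \stimes_{\A \times \A} (\A * \A)$. I focus on the first identity in \eqref{wzw7o5ta}; the three other identities reduce to essentially the same calculation after swapping the roles of the Frobenius structure maps.

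To apply Proposition \ref{blagblt1}, three conditions must be verified: transversality of the homotopy fibre product, strong transversality, and the surjective submersion condition \eqref{nvwikwcm}. Each reduces to a direct tangent-space computation that exploits the defining identities of an abelian symplectic groupoid, namely $\sss = \ttt$ at the level of maps and their derivatives, together with $\sss \circ \uuu = \Id = \ttt \circ \uuu$. For the surjective submersion condition, given a point in the codomain corresponding to $(n, g_1, g_2)$ with $g_1, g_2 \in \A_n$, the assignment $a_2 \mtoo ((n, a_2 g_2), (g_1^{-1}, a_2))$ provides a smooth family of preimages parametrized by $a_2 \in \A_n$, exhibiting the map as a surjective submersion.

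Once Proposition \ref{blagblt1} applies, the task reduces to identifying the strong fibre product as a 1-shifted Lagrangian correspondence. A set-level calculation shows that the arrows of $(N \times \A) \stimes_{\A \times \A} (\A * \A)$ are precisely the tuples $(n, a, (\uuu(n), a))$ with $a \in \A_n$, in bijection with $\A\arr$ via $a \mtoo (\ttt(a), a, (\uuu(\ttt(a)), a))$; the objects similarly identify with $N$. Under these identifications, the projection $\pr_\A : N \times \A \too \A$ and the multiplication $\mmm(\uuu(n), a) = \uuu(n) \cdot a = a$ both become the identity morphism $\A \too \A$. The induced 2-form on the strong fibre product is the sum of two pullbacks of trivial 2-forms, hence zero, matching the 1-shifted Lagrangian structure of $\Id_\A$. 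Combined with Proposition \ref{blagblt1}, this yields the required weak equivalence. The main technical obstacle throughout is verifying condition \eqref{nvwikwcm}; the abelianness of $\A$ makes the interplay of $\uuu$ and $\mmm$ transparent, so the verification remains routine for each of the four identities.
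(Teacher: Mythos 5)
Your proposal is correct and follows essentially the same route as the paper: reduce the homotopy fibre product to the strong fibre product via Proposition \ref{blagblt1} by checking transversality, strong transversality, and that the map \eqref{nvwikwcm} (here $(a,b,c)\mapsto(b^{-1},c^{-1}a)$ on $\A^{*3}$) is a surjective submersion, and then identify $(N\times\A)\stimes_{\A\times\A}(\A*\A)\cong\A$ with trivial $2$-form as the identity correspondence. Your explicit fibre parametrization and the check that the induced $2$-form vanishes are just slightly more detailed versions of steps the paper treats as clear.
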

	
	\begin{proof}
		We verify the first identity in \eqref{wzw7o5ta} only; the others are handled similarly.
		Transversality in \eqref{rrcfu3yn} is the statement that the maps
		\begin{equation}\label{swep5mxs}
			\begin{tikzcd}[row sep=0em]
				N^2 \times N \arrow{r} & N^2 \times N^2 & \A^2 \arrow{l} \\
				(x, y, z) \arrow[mapsto]{r} & (x, y, z, z) & \\
				& (\sss(a), \sss(b), \sss(a), \sss(b)) & (a, b) \arrow[mapsto]{l} \\
			\end{tikzcd}
		\end{equation}
		are transverse.
		We show that the intersection is weakly equivalent to the strong fibre product via Proposition \ref{blagblt1}.
		Strong transversality amounts to the maps
		\[
		\begin{tikzcd}[row sep=0em]
			N \times \A \arrow{r} & \A^2 & \A^{*2} \arrow{l} \\
			(x, a) \arrow[mapsto]{r} & (x, a), (b, c) & (b, c) \arrow[mapsto]{l}
		\end{tikzcd}
		\]
		being transverse; this is clear.
		Moreover, the map \eqref{nvwikwcm} is the map
		\begin{align*}
			\A^{*3} &\too \A^{*2} \\
			(a, b, c) &\mtoo (b^{-1}, c^{-1}a)
		\end{align*}
		which is also clearly a surjective submersion.
		The composition in \eqref{rrcfu3yn} is then weakly equivalent to the strong fibre product $(N \times \A) \stimes_{\A^2} (\A * \A) \cong \A$.
		%
		%
	\end{proof}
	
	\begin{lemma}[Commutativity and cocommutativity]\label{4900zjwk}
		Relations \eqref{xbfqd14f} and \eqref{3p922agk} hold in $\WS$ for $\A_\eta, \A_\mu, \A_\delta, \A_\epsilon$.
		More precisely, \eqref{xbfqd14f} is the statement that the 1-shifted Lagrangian correspondences
		\begin{equation}\label{tts3jim2}
			\begin{tikzcd}[row sep={3em,between origins},column sep={4em,between origins}]
				& \A \times \A \arrow[swap]{dl}{\Id} \arrow{dr}{\mathrm{swap}} & & \A * \A \arrow[hook']{dl} \arrow{dr}{\mmm} & \\
				\A \times \A & & \A \times \A & & \A
			\end{tikzcd}
		\end{equation}
		are transverse, and their composition is weakly equivalent to \eqref{4rsl53np}.
		A similar statement holds for \eqref{3p922agk}.
	\end{lemma}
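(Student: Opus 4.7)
The plan is to mirror the proof of Lemma \ref{9cx290iq}. I would first verify that \eqref{tts3jim2} is transverse, then apply Proposition \ref{blagblt1} to replace the homotopy fibre product by the strong fibre product, and finally identify this strong fibre product with \eqref{4rsl53np} using commutativity of $\A$. The essential simplification throughout is that the swap leg in \eqref{tts3jim2} is a diffeomorphism, which makes most transversality and submersivity conditions automatic.

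For transversality, the two maps into $\G_2\obj \times \G_2\obj = N^4$ are $(x, y, z) \mapsto (y, x, z, z)$ and $(a, b) \mapsto (\sss(a), \sss(b), \ttt(a), \ttt(b))$; using $\sss = \ttt$ on $\A$ together with the extra freedom along the group-bundle fibres of $\A \to N$, a direct tangent-space count shows that the images of the two differentials together span $TN^4$ at every common point. Strong transversality at the arrow level is immediate since swap is a diffeomorphism. For the submersion hypothesis of Proposition \ref{blagblt1}, the map \eqref{nvwikwcm} sends $((a, b), (c, d))$ to a triple determined by $(\ttt(a), \ttt(b), c^{-1}b, d^{-1}a, \ttt(c))$; given target data $((x, x), (e, f), x)$ with $e, f \in \A_x$, one may freely choose $c, d \in \A_x$ and set $b = ce$, $a = df$, producing local sections and thereby establishing surjective submersivity.

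Proposition \ref{blagblt1} then identifies the composition with the strong fibre product $(\A \times \A) \stimes_{\A \times \A} (\A * \A)$, whose arrow space is $\{((a, b), (b, a)) : \sss(a) = \sss(b)\}$ and is canonically isomorphic to $\A * \A$ via $(a, b) \mapsto ((a, b), (b, a))$. Under this identification the composition becomes the correspondence $\A \times \A \hookleftarrow \A * \A \xrightarrow{(a,b) \mapsto ba} \A$. Since $\A$ is abelian, $ba = ab$, so this coincides strictly with \eqref{4rsl53np}; the 2-form on $N$ vanishes because all input 2-forms are zero and $\uuu^*\omega = 0$ by multiplicativity of $\omega$. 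The cocommutativity relation \eqref{3p922agk} follows dually: the analogous strong fibre product yields a correspondence differing from $\A_\delta$ only by the involution $(c, d) \mapsto (d, c)$ of $\A * \A$, which is an isomorphism of 1-shifted Lagrangian correspondences because $cd = dc$.

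The main technical point will be the bookkeeping surrounding the submersion hypothesis of Proposition \ref{blagblt1}, but once the definitions are unwound it is rendered routine by the diffeomorphism nature of swap together with the group-bundle structure on $\A \to N$.
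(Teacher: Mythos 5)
Your proposal is correct and follows essentially the same route as the paper: verify transversality and strong transversality, check that the map \eqref{nvwikwcm} (which here is $(a,b,c,d)\mapsto(c^{-1}b,d^{-1}a)$ on composable quadruples) is a surjective submersion, invoke Proposition \ref{blagblt1} to pass to the strong fibre product, and identify it with $\A*\A$ mapping by $(a,b)\mapsto ba=ab$, i.e.\ with $\A_\mu$, using commutativity of the isotropy groups. The only cosmetic point is that "local sections through target points" gives surjectivity plus submersivity only along those sections, but the map is visibly a submersion everywhere (fibrewise translation), matching the paper's level of justification.
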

	
	\begin{proof}
		We verify \eqref{xbfqd14f} only; the case of \eqref{3p922agk} is similar.
		Transversality in \eqref{tts3jim2} amounts to the fact that the maps in \eqref{swep5mxs} are transverse.
		Strong transversality is also clear.
		With a view to applying Proposition \ref{blagblt1}, we note that \eqref{nvwikwcm} corresponds to the surjective submersion
		\begin{align*}
			\A^{*4} &\too \A^{*2} \\
			(a, b, c, d) &\mtoo (c^{-1}b, d^{-1}a).
		\end{align*}
		We conclude that the composition in \eqref{tts3jim2} is weakly equivalent to the strong fibre product $(\A \times \A) \stimes_{\A^2} (\A * \A) = \{((a, b), (c, d)) \in (\A \times \A) \times (\A * \A) : (b, a) = (c, d)\}$.
		The latter is isomorphic to $\A * \A$, together with the maps $\A \times \A \leftarrow \A * \A \to \A$, $(a, b) \mapsfrom (a, b) \mapsto ba = ab$, i.e.\ to $\A_\mu$.
	\end{proof}
	
	\begin{lemma}[Frobenius relations]\label{jycyn8qs}
		Relations \eqref{vjvmwnz4} hold in $\WS$ for $\A_\eta, \A_\mu, \A_\delta, \A_\epsilon$.
		More precisely, the first relation in \eqref{vjvmwnz4} is the statement that the pairs of 1-shifted Lagrangian correspondences
		\begin{equation}\label{cy8fojir}
			\begin{tikzcd}[row sep={3em,between origins},column sep={4em,between origins}]
				& (\A * \A) \times \A \arrow[swap]{dl}{\mmm \times \Id} \arrow[hook]{dr} & & \A \times (\A * \A) \arrow[hook']{dl} \arrow{dr}{\Id \times \mmm} & \\
				\A \times \A & & \A \times \A \times \A & & \A \times \A
			\end{tikzcd}
		\end{equation}
		and
		\begin{equation}\label{8xxyokd5}
			\begin{tikzcd}[row sep={3em,between origins},column sep={4em,between origins}]
				& \A * \A \arrow[hook']{dl} \arrow{dr}{\mmm} & & \A * \A \arrow[swap]{dl}{\mmm} \arrow[hook]{dr} & \\
				\A \times \A & & \A & & \A \times \A
			\end{tikzcd}
		\end{equation}
		are both transverse, and that their compositions are weakly equivalent.
		A similar statement holds for the second identity.
	\end{lemma}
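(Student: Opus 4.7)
The plan is to follow the template established in Lemmas \ref{9cx290iq} and \ref{4900zjwk}. For each of the two pairs \eqref{cy8fojir} and \eqref{8xxyokd5}, I will verify transversality and strong transversality of the 1-shifted Lagrangian correspondences, check the surjective submersion hypothesis of Proposition \ref{blagblt1}, and invoke that proposition to identify each composition with its strong fibre product. The remaining substance of the proof is then an explicit identification between the two strong fibre products that intertwines the span maps to $\A \times \A$ on each side.

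Transversality for \eqref{cy8fojir} concerns the two maps $N \times N \to N^3$ given by $(x, y) \mapsto (x, x, y)$ and $(x, y) \mapsto (x, y, y)$, against the map $(\sss, \ttt) : \A^3 \to N^3 \times N^3$. Since $\sss = \ttt$ in the abelian case, the image of the latter is the diagonal $\Delta_{N^3} \subset N^3 \times N^3$, and a short tangent-space computation shows that the sum is surjective. Transversality for \eqref{8xxyokd5} is immediate, as one of the outer maps is the identity on $N \times N$. Strong transversality in both cases reduces via the abelian hypothesis to the submersivity of $\sss : \A \to N$. The map \eqref{nvwikwcm} in each case is a surjective submersion: for \eqref{cy8fojir} one constructs an explicit smooth section by sending a triple to itself and inserting identity arrows in the remaining three factors, and for \eqref{8xxyokd5} the corresponding section is even more direct.

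By Proposition \ref{blagblt1}, the composition in \eqref{cy8fojir} is weakly equivalent to the strong fibre product $\A^{*3}$ with span maps $(a, b, c) \mapsto (ab, c)$ on the left and $(a, b, c) \mapsto (a, bc)$ on the right. The composition in \eqref{8xxyokd5} is weakly equivalent to the strong fibre product $\{((a, b), (c, d)) \in \A^{*2} \times \A^{*2} : ab = cd\}$ with the two inclusion-type projections. The heart of the lemma is the verification that
\[
\phi : \A^{*3} \too \{((a, b), (c, d)) \in \A^{*2} \times \A^{*2} : ab = cd\}, \quad (a, b, c) \mtoo ((ab, c), (a, bc)),
\]
is a Lie groupoid isomorphism, with inverse $((a', b'), (c', d')) \mapsto (c', (c')^{-1}a', b')$. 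Well-definedness of $\phi$ is exactly associativity $(ab) \cdot c = a \cdot (bc)$ in $\A$; the groupoid-homomorphism property uses commutativity of the isotropy groups. Compatibility of the span maps to $\A \times \A$ on each side follows from the very definition of $\phi$, and since the 1-shifted Lagrangian structures on both strong fibre products are carried by the trivial 2-form on the object manifold $N$, no form-matching condition needs to be checked. The second identity in \eqref{vjvmwnz4} is obtained by the entirely symmetric argument, exchanging the roles of multiplication and comultiplication.

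No step is expected to pose a serious obstacle. The main bookkeeping difficulty lies in tracking the various source and target conditions across the iterated fibre products, but the abelian hypothesis $\sss = \ttt$ and commutativity of isotropy collapse all base-point constraints to a single common point in $N$, which reduces the computations to routine manipulations in a bundle of abelian groups.
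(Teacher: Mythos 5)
Your proposal is correct and follows essentially the same route as the paper: transversality and strong transversality checks, the surjective-submersion hypothesis of Proposition \ref{blagblt1}, identification of both compositions in \eqref{cy8fojir} and \eqref{8xxyokd5} with their strong fibre products ($\A^{*3}$ and $\{(a,b,c,d)\in\A^{*2}\times\A^{*2}: ab=cd\}$), and the explicit isomorphism $(a,b,c)\mapsto(ab,c,a,bc)$ intertwining the spans. The paper's proof differs only in bookkeeping details, so no further comment is needed.
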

	
	\begin{proof}
		We show that both of these compositions are weakly equivalent to $\A^{2, 2}$ using Proposition \ref{blagblt1}. Let us begin with \eqref{cy8fojir}.
		Transversality is the statement that the maps
		\[
		\begin{tikzcd}[row sep=0em]
			N^2 \times N^2 \arrow{r} & N^3 \times N^3 & \A^3 \arrow{l} \\
			((x, y), (u, v)) \arrow[mapsto]{r} & ((x, x, y), (u, v, v)) & \\
			& ((\sss(a), \sss(b), \sss(c)), (\sss(a), \sss(b), \sss(c))) & (a, b, c) \arrow[mapsto]{l}
		\end{tikzcd}
		\]
		are transverse.
		Strong transversality is the statement that the maps
		\[
		\begin{tikzcd}[row sep=0em]
			\A^{*2} \times \A \arrow{r} & \A^3 & \A \times \A^{*2} \arrow{l} \\
			((a, b), c) \arrow[mapsto]{r} & (a, b, c) & (a, (b, c)) \arrow[mapsto]{l}
		\end{tikzcd}
		\]
		are transverse.
		The map \eqref{nvwikwcm} in Proposition \ref{blagblt1} corresponds to
		\begin{align*}
			\A^{*3} * \A^{*3} &\too \A^{*3} \\
			((a_1, b_1, c_1), (a_2, b_2, c_2)) &\mtoo (a_2^{-1}a_1, b_2^{-1}b_1, c_2^{-1}c_1),
		\end{align*}
		which is a surjective submersion.
		It follows that the composition in \eqref{cy8fojir} is weakly equivalent to the strong fibre product $((\A * \A) \times \A) \stimes_{\A^3} (\A \times (\A * \A))$.
		The latter can be identified with $\A * \A * \A$, together with the maps 
		\begin{equation}\label{1xp09ncc}
			\begin{tikzcd}[row sep={3em,between origins},column sep={4em,between origins}]
				& \A * \A * \A \arrow{dl} \arrow{dr} & \\
				\A \times \A & & \A \times \A
			\end{tikzcd}
			\qquad
			\begin{tikzcd}[row sep={3em,between origins},column sep={4em,between origins}]
				& (a, b, c) \arrow[mapsto]{dl} \arrow[mapsto]{dr} & \\
				(ab, c) & & (a, bc).
			\end{tikzcd}
		\end{equation}

		We now consider \eqref{8xxyokd5}.
		Transversality amounts to the maps
		\[
		\begin{tikzcd}
			N \times N \arrow{r}{\Id} & N \times N & \A \arrow{l}{(\sss, \sss)}
		\end{tikzcd}
		\]
		being transverse.
		Strong transversality amounts to the maps
		\[
		\begin{tikzcd}[row sep=0em]
			\A^{*2} \arrow{r} & \A & \A^{*2} \arrow{l} \\
			(a, b) \arrow[mapsto]{r} & ab & \\
			& cd & (c, d) \arrow[mapsto]{l}
		\end{tikzcd}
		\]
		being transverse.
		The map \eqref{nvwikwcm} in Proposition \ref{blagblt1} corresponds to
		\begin{align*}
			\A^{*4} &\too \A \\
			(a, b, c, d) &\mtoo abc^{-1}d^{-1},
		\end{align*}
		a surjective submersion.
		The composition in \eqref{8xxyokd5} is therefore weakly equivalent to the strong fibre product $(\A * \A) \stimes_{\A} (\A * \A) = \{(a_1, a_2, b_1, b_2) \in \A^{*4} : a_1 a_2 = b_1 b_2\}$.
		There is an isomorphism from \eqref{1xp09ncc} to the latter given by $(a, b, c) \mto (ab, c, a, bc)$.
	\end{proof}
	
	\begin{theorem}\label{9yp389oi}
		If $\A$ is an abelian symplectic groupoid, then $(\A,\A_{\eta},\A_{\mu},\A_{\delta},\A_{\epsilon})$ is a commutative Frobenius object in $\WS$. It thereby determines a $2$-dimensional TQFT $\Cob_2 \longrightarrow \WS$. \qed
	\end{theorem}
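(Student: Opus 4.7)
The proof plan is essentially an assembly argument: all of the technical content required to verify the Frobenius axioms has already been distributed across Lemmas \ref{9cx290iq}, \ref{4900zjwk}, and \ref{jycyn8qs}. What remains is to check that these three lemmas exhaust the defining relations \eqref{wzw7o5ta}--\eqref{vjvmwnz4} of a commutative Frobenius object, and then to invoke the standard equivalence between such objects and 2-dimensional TQFTs.

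Concretely, I would first recall that by the discussion following \eqref{vjvmwnz4}, a commutative Frobenius object in $\WS$ is the data of an object $X$ together with morphisms $X_\eta,X_\mu,X_\delta,X_\epsilon$ satisfying the associativity/unit axioms \eqref{wzw7o5ta}, the coassociativity/counit axioms \eqref{xn8lyg5e}, the commutativity axiom \eqref{xbfqd14f}, the cocommutativity axiom \eqref{3p922agk}, and the Frobenius relations \eqref{vjvmwnz4}, where the cylinder $\iota$ is sent to the identity and the twist $\tau$ is sent to the braiding. Then I would simply point out that Lemma \ref{9cx290iq} establishes \eqref{wzw7o5ta} and \eqref{xn8lyg5e}, Lemma \ref{4900zjwk} establishes \eqref{xbfqd14f} and \eqref{3p922agk}, and Lemma \ref{jycyn8qs} establishes \eqref{vjvmwnz4} for the chosen morphisms $\A_\eta,\A_\mu,\A_\delta,\A_\epsilon$.

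The only remaining checks are of a bookkeeping nature: the identity morphism $\Id_\A$ in $\WS$ is represented by the correspondence $\A\longleftarrow\A\longrightarrow\A$ of Example \ref{c00ufvui}, so the cylinder is indeed assigned to $\Id_\A$; and the braiding $B_{\A,\A}:\A\times\A\longrightarrow\A\times\A$, defined at the end of Section \ref{Section: Shifted} as $\A\times\A$ endowed with the swap on one side, agrees with the middle correspondence appearing in the composition \eqref{tts3jim2}, so the twist is assigned to $B_{\A,\A}$. With this identification, the tuple $(\A,\A_\eta,\A_\mu,\A_\delta,\A_\epsilon)$ satisfies every defining relation, and is therefore a commutative Frobenius object in $\WS$.

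There is no serious obstacle here, since the genuine content is in the three preceding lemmas; the only mildly subtle point is to keep track of the correct weak equivalence classes when matching \eqref{vjvmwnz4} with both compositions in Lemma \ref{jycyn8qs}, so that the two sides of each Frobenius relation yield the same class in $\WS$. Once the Frobenius structure is in place, the TQFT $\eta_\A:\Cob_2\longrightarrow\WS$ is produced by the standard generators-and-relations description of $\Cob_2$ recalled above (e.g.\ \cite[Theorem 3.6.17]{koc:04}), sending $S^1$ to $\A$ and the six generating cobordisms to $\A_\eta,\A_\mu,\iota\mapsto\Id_\A,\A_\delta,\A_\epsilon,\tau\mapsto B_{\A,\A}$.
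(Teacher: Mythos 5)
Your proposal is correct and matches the paper's own proof, which is exactly the one-line assembly argument: Theorem \ref{9yp389oi} is deduced as an immediate consequence of Lemmas \ref{9cx290iq}, \ref{4900zjwk}, and \ref{jycyn8qs}, with the identity and braiding identifications handled as you describe. Your write-up is simply a more explicit version of the same route, so there is nothing to add.
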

	
	\begin{proof}
	This is an immediate consequence of Lemmas \ref{9cx290iq}, \ref{4900zjwk}, and \ref{jycyn8qs}.
	\end{proof}
	
	\begin{remark}\label{vj7om4mk}
		By an induction argument, we see that for all $(m, n) \ne (0, 0)$, the TQFT in Theorem \ref{9yp389oi} maps the genus-0 cobordism from $m$ circles to $n$ circles to $\A^{m, n} \coloneqq \{(a, b) \in \A^{*m} * \A^{*n} : a_1 \cdots a_m = b_1 \cdots b_n\}$ together with the natural projections to $\A^m$ and $\A^n$. 
	\end{remark}
	
	\subsection{Abelianizations of quasi-symplectic groupoids}\label{Subsection: Abelianizable}
	
	We now consider the following definition.
	
	\begin{definition}
	An \defn{abelianization} of a quasi-symplectic groupoid $\G$ is an abelian symplectic groupoid $\A$, together with a symplectic Morita equivalence from $\A$ to $\G$. A quasi-symplectic groupoid is called \defn{abelianizable} if it admits an abelianization.
	\end{definition}
	
	Let $\G \tto M$ be a quasi-symplectic groupoid with an abelianization $\A \longleftarrow \H \longrightarrow \G$.
	By Morita transfer (see Subsection \ref{3y574s4t}), it follows that $\A_\eta, \A_\mu, \A_\delta, \A_\epsilon$ transfer in a unique way to morphisms in $\WS$ of the form $\G_\eta : \star \longrightarrow \G$, $\G_\mu : \G^2 \longrightarrow \G$, $\G_\delta : \G \longrightarrow \G^2$, and $\G_\epsilon : \G \longrightarrow \star$.
	For example, we can realize them as the homotopy fibre products of $\A_\eta$, $\A_\mu$, $\A_\delta$, and $\A_\epsilon$ with $\H^-$, $\H \times \times \H^-$, $\H \times \H^- \times \H^-$, and $\H$, respectively.
	Note that any other choice of abelianization $\A$ of $\G$ will give 1-shifted Lagrangians weakly equivalent to $\G_\eta, \G_\mu, \G_\delta, \G_\epsilon$.
	The corresponding morphisms in $\WS$ are therefore independent of the choice of abelianization.
	
	\begin{theorem}\label{jxre2dem}
		If $\G$ is an abelianizable quasi-symplectic groupoid, then $(\G,\G_\eta, \G_\mu, \G_\delta, \G_\epsilon)$ is a commutative Frobenius object in $\WS$.
		In this way, $\G$ determines a $2$-dimensional TQFT $\Cob_2 \longrightarrow \WS$.
	\end{theorem}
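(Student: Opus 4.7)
The plan is to reduce the theorem to the abelian case already handled by Theorem \ref{9yp389oi}, using Morita transfer as the transport mechanism. Fix an abelianization $\A \longleftarrow \H \longrightarrow \G$. By Theorem \ref{9yp389oi}, the tuple $(\A, \A_\eta, \A_\mu, \A_\delta, \A_\epsilon)$ is a commutative Frobenius object in $\WS$, so all we need to verify is that each of the relations \eqref{wzw7o5ta}--\eqref{vjvmwnz4} in $\WS$ is preserved by Morita transfer along the symplectic Morita equivalence $\A \longleftarrow \H \longrightarrow \G$, and its products with itself.

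First I would record that if $\A \longleftarrow \H \longrightarrow \G$ is a symplectic Morita equivalence, then for any $n \geq 0$ the $n$-fold Cartesian product $\A^n \longleftarrow \H^n \longrightarrow \G^n$ is again a symplectic Morita equivalence, so $\A^n$ is an abelianization of $\G^n$ for every $n$. Consequently, the structure morphisms $\G_\eta, \G_\mu, \G_\delta, \G_\epsilon$ can be obtained as Morita transfers (in the sense of Section \ref{3y574s4t}) of $\A_\eta, \A_\mu, \A_\delta, \A_\epsilon$ along the appropriate symplectic Morita equivalences $\A^m \longleftarrow \H^m \longrightarrow \G^m$. The key mechanism is that each $\A$-structure morphism, being a 1-shifted Lagrangian correspondence from $\A^m$ to $\A^n$, admits a Morita transfer which is a 1-shifted Lagrangian correspondence from $\G^m$ to $\G^n$, unique up to weak equivalence.

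Next I would verify that composition of morphisms in $\WSQ$ is compatible with Morita transfer. This is exactly the content of Proposition \ref{eiptq550}: if two composable morphisms in $\WSQ$ are each Lagrangially Morita equivalent to two other morphisms, then their compositions are Lagrangially Morita equivalent, and in particular composability is preserved. Applying this inductively to each of the identities \eqref{wzw7o5ta}--\eqref{vjvmwnz4} — which are verified for $\A_\eta, \A_\mu, \A_\delta, \A_\epsilon$ in Lemmas \ref{9cx290iq}, \ref{4900zjwk}, \ref{jycyn8qs} — yields the corresponding identities for $\G_\eta, \G_\mu, \G_\delta, \G_\epsilon$ in $\WS$. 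One also needs the analogous compatibility for the monoidal product and braiding on $\WS$, both of which are built from Cartesian products and therefore behave well under Morita transfer.

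The only mildly subtle point is the role of the identity and braiding morphisms $\Id_\G$ and $B_\G$, which appear implicitly in the Frobenius axioms (e.g.\ the cylinder becoming $\Id_\G$ and the twist becoming $B_\G$). For these we use Example \ref{c00ufvui} and the fact that $\Id_\G \longleftarrow \Id_\H \longrightarrow \Id_\G$ is already a Lagrangian Morita equivalence between $\Id_\A$ and $\Id_\G$, together with the obvious analogue for the braiding. Once all this is in place, the relations \eqref{wzw7o5ta}--\eqref{vjvmwnz4} in $\WS$ for $\G$ follow directly from those in $\WS$ for $\A$, so $(\G, \G_\eta, \G_\mu, \G_\delta, \G_\epsilon)$ is a commutative Frobenius object in $\WS$ and therefore determines a 2-dimensional TQFT $\Cob_2 \longrightarrow \WS$. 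The main (and essentially only) obstacle is the bookkeeping needed to confirm that Morita transfer intertwines composition at each step, but this is entirely contained in Proposition \ref{eiptq550}.
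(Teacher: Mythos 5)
Your proposal is correct and follows essentially the same route as the paper: the structure morphisms $\G_\eta,\G_\mu,\G_\delta,\G_\epsilon$ are defined by Morita transfer of $\A_\eta,\A_\mu,\A_\delta,\A_\epsilon$ along (products of) the symplectic Morita equivalence $\A \longleftarrow \H \longrightarrow \G$, and the Frobenius relations verified in Lemmas \ref{9cx290iq}, \ref{4900zjwk}, \ref{jycyn8qs} are carried over using Proposition \ref{eiptq550}, with the identity and braiding handled exactly as you indicate. The paper's proof is simply a terse citation of these same ingredients, so your elaboration (products of Morita equivalences, compatibility of transfer with composition, identification of $\Id_\G$ and $B_\G$ with the transfers of $\Id_\A$ and $B_\A$) fills in the same bookkeeping the authors leave implicit.
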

	
	\begin{proof}
		This follows from Proposition \ref{eiptq550} and Lemmas \ref{9cx290iq}, \ref{4900zjwk}, and \ref{jycyn8qs}.
	\end{proof}
	
	\subsection{Global slices}\label{Subsection: Global slice}
	
	We now discuss an application of Theorem \ref{9yp389oi}.
	
	\begin{definition}\label{Definition: Global slice}
	A \defn{global slice} to a Lie groupoid $\G\tto M$ is a submanifold $S\s M$ intersecting every orbit exactly once and transversely. The global slice is \defn{admissible} if for all $x \in S$, the isotropy group $\G_x$ is abelian.
	\end{definition}

\begin{remark}
Let $S\s X$ be an admissible global slice to a symplectic groupoid $\G\tto X$. The Lie algebra of $\G_x$ is necessarily abelian for all $x\in S$. In this way, asking the groups $\G_x$ to be abelian is a very mild constraint; it holds in many situations.   
\end{remark}
	
	\begin{proposition}\label{n7nca2ep}
	Let $(\G, \omega) \tto (M, \phi)$ be a quasi-symplectic groupoid. Suppose that $S\s M$ is an admissible global slice for which $i^*\phi=\mathrm{d}\gamma$ is exact, where $i:S\longrightarrow M$ is the inclusion. Then the restriction $\A \coloneqq \G\big\vert_S=\sss^{-1}(S)\cap\ttt^{-1}(S)\tto S$ is an abelianization of $\G$ with respect to $j^*\omega - \sss^*\gamma + \ttt^*\gamma$, where $j : \A \longrightarrow \G$ is the inclusion map.
	\end{proposition}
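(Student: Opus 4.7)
The plan is to verify, in order: that $\A \coloneqq \G\vert_S$ is an abelian Lie groupoid of dimension $2\dim S$; that $\tilde\omega \coloneqq j^*\omega - \sss^*\gamma + \ttt^*\gamma$ is closed, multiplicative, and non-degenerate, making $(\A,\tilde\omega)$ a symplectic groupoid; and that the inclusion $j:\A\to\G$, together with the datum $\gamma$, assembles into a symplectic Morita equivalence.

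For the first point, the transversality part of admissibility implies that $\A = \sss^{-1}(S)\cap\ttt^{-1}(S)$ is a smooth Lie subgroupoid of $\G$, while the singleton-intersection property forces every arrow of $\A$ to be an isotropy element, so $\sss = \ttt$ on $\A$. Combined with the hypothesis that each $\G_x$ is abelian for $x \in S$, this makes $\A$ abelian in the sense of Definition~\ref{rjg7zxym}. A short count using $\dim \G = 2\dim M$, the submersivity of $\sss$, and the transverse decomposition $T_xM = T_xS \oplus T_x(\text{orbit})$ yields $\dim\G_x = \dim S$ and hence $\dim\A = 2\dim S$.

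For the second point, closedness is immediate from
\[
d\tilde\omega = j^*(\sss^*\phi - \ttt^*\phi) - \sss^*(i^*\phi) + \ttt^*(i^*\phi) = 0,
\]
using $d\gamma = i^*\phi$ and $\sss\circ j = i\circ \sss\vert_\A$, $\ttt\circ j = i\circ \ttt\vert_\A$. Multiplicativity of $\tilde\omega$ reduces to that of $j^*\omega$, because on the nerve $\A^{(2)}$ the correction $-\sss^*\gamma + \ttt^*\gamma$ is a groupoid coboundary whose contribution to the multiplicativity cocycle cancels by a direct check. The technical heart is non-degeneracy: since $\tilde\omega$ is closed and multiplicative with $\dim\A = 2\dim S$, it suffices by the standard theory of symplectic groupoids to show that the IM-form $\sigma_{\tilde\omega}:A_\A\to T^*S$ is an isomorphism. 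For $a \in A_\A\vert_x = \mathfrak{g}_x \subseteq \ker\sss_*\cap\ker\ttt_*$ one has $\sss_*a = \ttt_*a = 0$, so the $\gamma$-terms drop and $\sigma_{\tilde\omega}(a) = \sigma_\omega(a)\vert_{T_xS}$. Using the standard identity $i_a\omega\vert_{\ker\ttt_*} = 0$ for $a \in \ker\sss_*\cap\ker\ttt_*$, the image of $\sigma_\omega\vert_{\mathfrak{g}_x}$ lies in the annihilator $(T_x\text{orbit})^0 \subseteq T_x^*M$, which by transversality restricts isomorphically to $T_x^*S$. The quasi-symplectic non-degeneracy $\ker\omega\cap\ker\sss_*\cap\ker\ttt_* = 0$ forces injectivity, and equality of dimensions then promotes this to an isomorphism.

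For the third point, the slice conditions make $j:\A\to\G$ an essential equivalence: the surjectivity of $\ttt\circ\pr_{\G^{(1)}}:S\fp{i}{\sss}\G^{(1)}\to M$ comes from $S$ meeting every orbit, the submersion property from transversality, and $\A \cong i^*\G$ is tautological. By the discussion in Subsection~\ref{3y574s4t}, the span $\A \longleftarrow \A \longrightarrow \G$ (with identity on the left and $j$ on the right) then presents a Morita equivalence. Taking $\delta = \gamma$ on $S = \A^{(0)}$, the identities
\[
\tilde\omega - j^*\omega = \ttt^*\gamma - \sss^*\gamma, \qquad 0 - i^*\phi = -d\gamma
\]
hold by construction, so by the criterion recalled in Subsection~\ref{3y574s4t} (from \cite[Lemma~6.3]{may:23}) this Morita equivalence is symplectic. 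The main obstacle is the non-degeneracy verification in the second step; everything else reduces to formal manipulations with the slice conditions and standard identities for multiplicative 2-forms.
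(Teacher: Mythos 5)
Your argument is correct and reaches the conclusion along the same basic skeleton as the paper (restrict to the slice, correct the $2$-form by $\gamma$, note that $\sss_{\A}=\ttt_{\A}$, and exhibit the inclusion as a symplectic Morita equivalence), but the nondegeneracy step is handled differently. The paper first observes that $j:\A\to\G$ is an essential equivalence (via the pullback-groupoid criterion and a nullity count), so that $(\A,j^*\omega,i^*\phi)$ is automatically a quasi-symplectic groupoid symplectically Morita equivalent to $\G$; it then applies the gauge transformation of Xu \cite[Prop.\ 4.6]{xu:04} to replace this by $(\A,\tilde\omega,0)$, and concludes nondegeneracy purely formally: since $\sss_\A=\ttt_\A$ the anchor vanishes, so $\A$ is a quasi-symplectic groupoid with trivial background $3$-form integrating the zero Poisson structure, hence a symplectic groupoid. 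You instead verify closedness and multiplicativity of $\tilde\omega$ by hand and prove nondegeneracy by computing the IM-form at identity points, using the orthogonality of $\ker\sss_*$ and $\ker\ttt_*$ and the slice decomposition $T_xM=T_xS\oplus T_x(\G\cdot x)$ to show $\sigma_{\tilde\omega}:A_\A\to T^*S$ is an isomorphism. Be aware that your appeal to ``standard theory'' at that point is doing genuine work: injectivity of the IM-form gives the weak nondegeneracy condition in the definition of a quasi-symplectic groupoid, and the passage from ``IM-form injective'' to ``$\tilde\omega$ nondegenerate at every arrow'' is exactly the statement from the presymplectic-groupoid/Dirac correspondence of \cite{bur-cra-wei-zhu:04} that the multiplicative form is nondegenerate if and only if the induced Dirac structure is Poisson --- i.e.\ the same machinery the paper invokes, entered at a different point. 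What each route buys: yours is more explicit and makes visible exactly how admissibility and transversality produce the isomorphism $A_\A\cong T^*S$ (and hence why $\dim\A=2\dim S$), while the paper's is shorter and avoids pointwise computation. One small remark: your span $\A\longleftarrow\A\overset{j}{\longrightarrow}\G$ has an essential equivalence rather than a Morita morphism on the right leg; this is fine and matches the paper's own usage, by the interchangeability of the two notions recorded in Subsection \ref{3y574s4t}.
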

	
	\begin{proof}
		Note that $\A$ is the pullback of $\G$ by $i$.
		It follows that $\ttt \circ \pr_{\G} : S \fp{i}{\sss} \G \to M$ being a surjective submersion would force $\A$ to be a Lie groupoid. We would therefore like to prove that
		\[
		T_xS \fp{i_*}{\sss_*} T_g\G \too T_{\ttt(g)}M, \quad (v, w) \mtoo \ttt_*(w)
		\]
		is surjective for all $(x, g) \in S \fp{i}{\sss} \G$.
		In other words, we would like to show that the nullity of this map is $\dim(S \fp{i}{\sss} \G) - \dim M$.
		Its kernel is clearly isomorphic to $\{(v, w) \in T_xS \times (A_\G)_x : v = \rho(w)\}$.
		Since $T_{x}M = T_xS + \im \rho_{x}$, it has dimension $\dim S + \dim (A_\G)_x - \dim M = \dim(S \fp{i}{\sss} \G) - \dim M$.
		It follows that $\A$ is a Lie groupoid, and that the inclusion $j : \A \longrightarrow \G$ is an essential equivalence.
		
		It follows that $\A$ is a quasi-symplectic groupoid with respect to $(j^*\omega, i^*\phi)$.
		Since $i^*\phi = \mathrm{d}\gamma$ is exact, the gauge transformation \cite[Proposition 4.6]{xu:04} $(\A, j^*\omega - \sss^*\gamma + \ttt^*\gamma, 0)$ is a quasi-symplectic groupoid with trivial background 3-form symplectically Morita equivalent to $\G$.
		Moreover, since $S$ is a global slice, we have $\sss_\A = \ttt_\A$.
		It follows that $\rho_\A = 0$, so that $\A$ integrates the zero Poisson structure. This implies that $\A$ is indeed a symplectic groupoid.
		It follows that $(\A, j^*\omega - \sss^*\gamma + \ttt^*\gamma)$ is an abelianization of $(\G, \omega, \phi)$.
	\end{proof}
	
	It follows from Theorem \ref{jxre2dem} that $\G$ together with the slice $S$ determine a TQFT
	\begin{equation}\label{tqgl5jrm}
	\eta_\G : \Cob_2 \too \WS.
	\end{equation}

	Recall that the original Moore--Tachikawa TQFT associated to a complex semisimple group $G$ is characterised by the fact that $\begin{tikzpicture}[
		baseline=-2.5pt,
		every tqft/.append style={
			transform shape, rotate=90, tqft/circle x radius=4pt,
			tqft/circle y radius= 2pt,
			tqft/boundary separation=0.6cm, 
			tqft/view from=incoming,
		}
		]
		\pic[
		tqft/cup,
		name=d,
		every incoming lower boundary component/.style={draw},
		every outgoing lower boundary component/.style={draw},
		every incoming boundary component/.style={draw},
		every outgoing boundary component/.style={draw},
		cobordism edge/.style={draw},
		cobordism height= 1cm,
		];
	\end{tikzpicture}$ is sent to the Hamiltonian $G$-space $G \times \mathcal{S}$, where $\mathcal{S}$ is a Kostant slice.
	 This Hamiltonian space can also be described as $\sss^{-1}(\mathcal{S})$, where $\sss$ is the source map of the symplectic groupoid $T^*G \tto \g^*$.
	 Using the correspondence of Example \ref{cb2gi3le} between Hamiltonian spaces and 1-shifted Lagrangians, the TQFTs \eqref{tqgl5jrm} generalize the Moore--Tachikawa example in the following way.
	
	\begin{proposition}\label{wlgt0nx7}
		The morphism $\eta_\G(\begin{tikzpicture}[
		baseline=-2.5pt,
		every tqft/.append style={
			transform shape, rotate=90, tqft/circle x radius=4pt,
			tqft/circle y radius= 2pt,
			tqft/boundary separation=0.6cm, 
			tqft/view from=incoming,
		}
		]
		\pic[
		tqft/cup,
		name=d,
		every incoming lower boundary component/.style={draw},
		every outgoing lower boundary component/.style={draw},
		every incoming boundary component/.style={draw},
		every outgoing boundary component/.style={draw},
		cobordism edge/.style={draw},
		cobordism height= 1cm,
		];
	\end{tikzpicture}) = \G_\epsilon$ is the 1-shifted Lagrangian on $\G$ associated with the Hamiltonian $\G$-space $\sss^{-1}(S)$, where $\G$ acts by left multiplication.
	\end{proposition}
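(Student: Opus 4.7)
The plan is to compute both sides explicitly and exhibit a direct weak equivalence between them. First, I identify a simple representative of $\G_\epsilon$. The abelianization $\A = \G|_S$ in Proposition \ref{n7nca2ep} comes with a symplectic Morita equivalence $\A \xleftarrow{\mathrm{Id}} \A \xrightarrow{j} \G$ whose base 2-form on $S$ is $\gamma$; this is the unique choice making $\omega_\A - j^*\omega = \ttt^*\gamma - \sss^*\gamma$ and $-j^*\phi = -\mathrm{d}\gamma$ hold. The 1-shifted Lagrangian $\A_\epsilon$ is presented by the trivial groupoid $S$ together with the identity section $\uuu : S \to \A$ and zero base 2-form. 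The Lagrangian Morita equivalence condition then forces $\G_\epsilon$ to admit the representative given by the trivial groupoid $S$, the morphism $\mu_1 \coloneqq j \circ \uuu : S \to \G$ (namely, the inclusion $i : S \hookrightarrow X$ on objects and trivial on arrows), and base 2-form $-\gamma$ on $S$.

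Next, construct a weak equivalence to the candidate 1-shifted Lagrangian $\mu_2 : \G \ltimes \sss^{-1}(S) \to \G$ (the action functor), equipped with the 2-form $\iota^*\omega - \sss^*\gamma$ on $\sss^{-1}(S)$, where $\iota : \sss^{-1}(S) \hookrightarrow \G\arr$ is the inclusion. Take $\M = \G \ltimes \sss^{-1}(S)$, $\psi_2 = \mathrm{Id}$, and $\psi_1 : \M \to S$ given by the source map on objects and the forced assignment to identities on arrows. One verifies that $\psi_1$ is an essential equivalence: $\sss : \sss^{-1}(S) \to S$ is a surjective submersion, and $(g, h) \mapsto (h, gh)$ identifies $(\G \ltimes \sss^{-1}(S))\arr$ with $\sss^{-1}(S) \times_S \sss^{-1}(S)$. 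The natural transformation $\theta : \mu_1\psi_1 \Rightarrow \mu_2\psi_2$ between the two functors $\M \to \G$ (the former sending $h \mapsto \sss(h)$ on objects and being trivial on arrows; the latter sending $h \mapsto \ttt(h)$ on objects and $(g, h) \mapsto g$ on arrows) is $\theta_h \coloneqq h \in \G\arr$, satisfying the naturality identity $\theta_{gh} \cdot \mathrm{id}_{\sss(h)} = gh = g \cdot \theta_h$.

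Finally, the 2-form compatibility $\psi_2^*\gamma_2 - \psi_1^*\gamma_1 = \theta^*\omega$ reduces to the formal cancellation $(\iota^*\omega - \sss^*\gamma) - \sss^*(-\gamma) = \iota^*\omega$. As a byproduct, $\G \ltimes \sss^{-1}(S) \to \G$ automatically inherits the structure of a 1-shifted Lagrangian, so $\sss^{-1}(S)$ with the 2-form $\iota^*\omega - \sss^*\gamma$ is indeed a Hamiltonian $\G$-space in Xu's sense. The principal obstacle is the sign bookkeeping in the first step: one must carefully trace through the Lagrangian Morita equivalence condition (with $\M = S$, $\nu = \uuu$, and trivial $\theta_i$) to deduce that the base 2-form on the chosen representative of $\G_\epsilon$ is $-\gamma$ and not $+\gamma$, after which the weak equivalence is a direct verification.
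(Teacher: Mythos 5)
Your argument is correct, and its first half coincides with the paper's: both identify $\G_\epsilon$ with the trivial groupoid $S$ mapping to $\G$ via the essential equivalence $\A=\G|_S\hookrightarrow\G$ (your sign bookkeeping giving base $2$-form $-\gamma$ is right, and is forced by condition (ii) of Definition \ref{yhan52mv} since $i^*\phi=\mathrm{d}\gamma$). Where you diverge is the second half. The paper does not write down the weak equivalence by hand; instead it invokes Example \ref{gesm9jcm} (the identity morphism is weakly equivalent to the bibundle $(\G\times\G)\ltimes\G$) together with Lemma \ref{rw45fz6q}, and then observes that the composition of $S\to\G$ with this bibundle is $\G\ltimes\sss^{-1}(S)\to\G$ — so the $1$-shifted Lagrangian structure on the action groupoid comes for free from the composition theorem. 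You instead exhibit the weak equivalence directly: the Morita morphism $\G\ltimes\sss^{-1}(S)\to S$ (source on objects), the tautological natural transformation $\theta_h=h$, and the cancellation $(\iota^*\omega-\sss^*\gamma)+\sss^*\gamma=\iota^*\omega=\theta^*\omega$. This is essentially the homotopy that the paper's bibundle composition produces implicitly, so the geometry is the same, but your route is more self-contained (no appeal to transversality of the composition), at the cost of one step you treat as automatic: that the data $(\G\ltimes\sss^{-1}(S),\,\iota^*\omega-\sss^*\gamma)$ "inherits" the full $1$-shifted Lagrangian structure, in particular the non-degeneracy condition \ref{dhs45mwu}. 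That inheritance along the Morita morphism $\psi_1$ is not a formal triviality but is exactly the Morita-transfer statement of \cite[Sections 7, 10]{may:23} on which the paper also relies (conditions (i)–(ii) you could just as well check directly, as they reduce to multiplicativity of $\omega$ and $\mathrm{d}\omega=\sss^*\phi-\ttt^*\phi$); citing that would close the only loose end. In the paper's route this point never arises because \cite[Theorem 4.10]{may:23} guarantees the composed correspondence is Lagrangian.
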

	
\begin{proof}
Let $\A \coloneqq \G|_S$ be the abelianization of $\G$.
Recall that $\A_\epsilon$ is the trivial groupoid $S \tto S$ viewed as a 1-shifted Lagrangian in $\A$.
A straightforward computation shows that $S \tto S$ is also a 1-shifted Lagrangian in $\G$.
Since the inclusion $\A \too \G$ is an essential equivalence, it follows that $\G_\epsilon$ is the 1-shifted Lagrangian $S \too \G$.
Recall from Example \ref{gesm9jcm} that the identity morphism $\G \longleftarrow \G \longrightarrow \G$ is weakly equivalent to the action groupoid $\G \longleftarrow (\G \times \G) \ltimes \G \longrightarrow \G$, where $\G \times \G$ acts on $\G$ by left and right multiplication.
Hence, by Lemma \ref{rw45fz6q}, the 1-shifted Lagrangian $S \too \G$ is weakly equivalent to its composition with $\G \longleftarrow (\G \times \G) \ltimes \G \longrightarrow \G$ on the left.
This composition is easily seen to be the 1-shifted Lagrangian $\G \ltimes \sss^{-1}(S) \too \G$.
\end{proof}

\section{The affinization process}\label{Section: Affinization}
In contrast to the preceding sections, we now work exclusively over $\mathbb{C}$. We first review some pertinent material from our manuscript on scheme-theoretic coisotropic reduction. This leads to a definition and study of Hartogs Morita morphisms. We next discuss a processes by which to affinize $0$-shifted symplectic stacks and $1$-shifted Lagrangians. We then define what it means for an affine symplectic groupoid to be Hartogs abelianizable. By Main Theorem \ref{Theorem: Main 1}, such a groupoid determines a TQFT in $\WS$. 
On the other hand, we introduce the algebraic Moore--Tachikawa category $\AMT$ as well as an affinization process taking algebraic 1-shifted Lagrangian correspondences in $\WS$ to morphisms in $\AMT$. 
The proof of Main Theorem \ref{Theorem: Main 2} is subsequently given. To conclude, we prove that an affine symplectic groupoid admitting an admissible Hartogs slice is necessarily Hartogs abelianizable.

\subsection{Preliminaries} We briefly review some conventions from \cite{cro-may:24}. The term \defn{algebraic groupoid} is used for a groupoid object in the category of complex algebraic varieties.
An \defn{algebraic Lie groupoid} is an algebraic groupoid with smooth arrow and object varieties, as well smooth morphisms for its source and target maps.
By an \defn{affine algebraic Lie groupoid}, we mean an algebraic Lie groupoid whose object and arrow varieties are affine. Such a groupoid $\G$ is called an \defn{affine symplectic groupoid} if $\G$ carries an algebraic symplectic form for which the graph of multiplication is coisotropic in $\G\times\G\times\G^-$.

The \defn{affine quotient} associated to an algebraic groupoid $\G \tto X$ is the affine scheme
\[
X \sll{} \G \coloneqq \Spec \C[X]^\G,
\]
where $\C[X]$ is the algebra of morphisms $f : X \longrightarrow \C$, and $\C[X]^\G$ is the subalgebra of those satisfying $\sss^*f = \ttt^*f$. The \defn{pullback} of $\G \tto X$ by a morphism $\mu : Y \longrightarrow X$ is the fibre product
\[
\mu^*\G \coloneqq Y \fp{\mu}{\sss} \G \fp{\ttt}{\mu} Y.
\]
It is an algebraic groupoid over $Y$.
If $\G \tto X$ is an algebraic Lie groupoid and $\mu : Y \longrightarrow X$ is a smooth morphism, then $\mu^*\G$ is also an algebraic Lie groupoid.

\subsection{Morita and Hartogs Morita morphisms}
A \defn{Morita morphism} between algebraic Lie groupoids $\G \tto X$ and $\H \tto Y$ is a morphism of algebraic groupoids $(f, \mu) : (\H \tto Y) \longrightarrow (\G \tto X)$ for which $\mu : Y \longrightarrow X$ is a surjective smooth morphism, and the induced morphism
\[
\H \too \mu^*\G, \quad h \mtoo (\sss(h), f(h), \ttt(h))
\]
is an isomorphism. It is also advantegeous to introduce the weaker notion of a \defn{Hartogs Morita morphism} between affine algebraic Lie groupoids $\G \tto X$ and $\H \tto Y$; we define it to be morphism of algebraic groupoids $(f, \mu) : (\H \tto Y) \longrightarrow (\G \tto X)$ such that $\mu : Y \longrightarrow X$ is a smooth morphism, the open subset $U \coloneqq \im \mu \s X$ has a complement of codimension at least two in $X$, and the induced morphism
\[
\H \too \mu^*\G
\]
is an isomorphism.
The key property of Hartogs Morita morphisms is that they preserve affine quotients, as the next result shows.

\begin{proposition}\label{x2vodfb5}
	If $(\G \tto X) \longrightarrow (\H \tto Y)$ is a Hartogs Morita morphism, then the induced map $X \sll{} \G \longrightarrow Y \sll{} \H$ is an isomorphism of affine schemes.
\end{proposition}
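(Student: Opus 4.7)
The plan is to show that the morphism of algebras $\mu^* : \C[X]^\G \longrightarrow \C[Y]^\H$ induced on $\G$-invariants is an isomorphism; on $\Spec$ this yields the claimed isomorphism of affine quotients (up to the direction of the arrow, which is a formal consequence). My approach will rest on three ingredients: (i)~the identification $\H \cong \mu^*\G$, (ii)~faithfully flat descent along the smooth surjection $\mu : Y \longrightarrow U$, where $U \coloneqq \im \mu$, and (iii)~Hartogs' theorem to extend from $U$ to $X$.

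First I would check that $\mu^*$ indeed lands in $\C[Y]^\H$: if $f \in \C[X]^\G$ and $h \in \H$, then $(f, \mu)$ being a groupoid morphism gives $\mu(\sss_\H(h)) = \sss_\G(f(h))$ and $\mu(\ttt_\H(h)) = \ttt_\G(f(h))$, so $\G$-invariance of $f$ forces $\sss_\H^*(\mu^*f) = \ttt_\H^*(\mu^*f)$. Injectivity of $\mu^*$ is then a density argument: since $X$ is smooth (hence reduced, with finitely many irreducible components of equal local dimension) and $X \setminus U$ has codimension at least two, $U$ meets every irreducible component of $X$ in a dense open subset; a regular function vanishing on $U$ must therefore vanish on $X$.

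The main work, and the step I expect to be the principal obstacle, is surjectivity. Given $\tilde g \in \C[Y]^\H$ I would proceed as follows. (a)~Note that $\tilde g$ is constant on the fibres of $\mu$: if $\mu(y_1) = \mu(y_2) = x$, then under the identification $\H \cong Y \fp{\mu}{\sss} \G \fp{\ttt}{\mu} Y$ the triple $(y_1, \uuu_x, y_2)$ is an arrow in $\H$, whence $\tilde g(y_1) = \tilde g(y_2)$. (b)~Because $\mu$ is smooth and surjective onto $U$, it is faithfully flat over $U$; faithfully flat descent for the map $\mu : Y \to U$ therefore produces a unique $g_U \in \C[U]$ with $\mu^* g_U = \tilde g$, the descent datum $\pr_1^*\tilde g = \pr_2^*\tilde g$ on $Y \times_U Y$ being exactly statement (a). (c)~Since $X$ is smooth and $X \setminus U$ has codimension at least two, Hartogs' theorem extends $g_U$ uniquely to $g \in \C[X]$.

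It remains to verify that $g \in \C[X]^\G$, and to recover $\mu^* g = \tilde g$ (the latter is automatic). For $\G$-invariance, the function $\sss^* g - \ttt^* g \in \C[\G]$ vanishes on the open subset $V \coloneqq \sss^{-1}(U) \cap \ttt^{-1}(U)$, because on $V$ the invariance of $g_U$ under $\G|_U$, which in turn comes from the groupoid morphism $\H \to \G$ lifting arrows in $\G|_U$ to arrows in $\H$, applies. The complement $\G \setminus V \subseteq \sss^{-1}(X \setminus U) \cup \ttt^{-1}(X \setminus U)$ has codimension at least two in $\G$, because $\sss$ and $\ttt$ are smooth (hence flat with equidimensional fibres) and therefore preserve codimensions of closed subsets. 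Reasoning as in the injectivity step, $\sss^*g - \ttt^*g$ must vanish identically on $\G$, giving $g \in \C[X]^\G$. This closes the loop and proves bijectivity.
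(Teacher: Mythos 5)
Your proposal is correct and follows essentially the same route as the paper's proof: descend the $\H$-invariant function along $\mu$ to a regular function on $U=\im\mu$, extend it across the codimension-two complement by Hartogs, verify that $\mu^*$ recovers the original function, and check $\G$-invariance by lifting arrows of $\G\big\vert_U$ to $\H\cong\mu^*\G$ and using density of $\G\big\vert_U$ in the smooth groupoid $\G$. The only difference is in the descent step, where you invoke faithfully flat (fpqc) descent of functions directly, while the paper chooses \'etale-local sections of the smooth map $\mu$ and glues by \'etale descent --- equivalent mechanisms, with yours avoiding the choice of sections.
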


\begin{proof}
	It suffices to consider the case in which $\H = \mu^*\G$ for $\G \tto X$ an affine algebraic Lie groupoid and $\mu : Y \longrightarrow X$ a smooth morphism whose image $U \coloneqq \im \mu$ has a complement of codimension at least two in $X$. Composing with $\mu$ then defines an injective algebra morphism $\C[X]^\G \longrightarrow \C[Y]^{\mu^*\G}$. We are reduced to showing that this morphism is surjective.
	
	Suppose that $f \in \C[Y]^{\mu^*\G}$.
	Since $\mu$ is smooth, there is an \'etale covering $\{\pi_i:U_i \longrightarrow U\}_{i\in I}$ with local sections $\{\sigma_i : U_i \longrightarrow Y\}_{i\in I}$ of $\mu$.
	We will constuct a function $F \in \C[X]^\G$ mapping to $f$ by considering the functions $F_i \coloneqq \sigma_i^*f : U_i \longrightarrow \C$, and subsequently using \'etale descent.
	To this end, fix $i,j\in I$, let $U_{ij} \coloneqq U_i \times_X U_j$, and write $p_1 : U_{ij} \longrightarrow U_i$ and $p_2 : U_{ij} \longrightarrow U_j$ for the projections. We have $$(p_1^*F_i)(x, y) = f(\sigma_i(x)) = f(\sigma_j(y)) = (p_2^*F_j)(x, y)$$
	for all $(x, y) \in U_{ij}$; the second equality follows from $\mu^*\G$-invariance of $f$, as $((\sigma_i(x), \sigma_j(y)), 1_{\pi_i(x)}) \in \mu^*\G$. We conclude that $p_1^* F_i = p_2^* F_j$.
	Since $\Hom(-, \C)$ is a sheaf in the \'etale topology, there is a function $F : U \longrightarrow \C$ satisfying $\pi_i^*F = F_i$ for all $i\in I$.
	Our assumption on codimension then implies that $F$ extends uniquely to an element $F \in \C[X]$.
	
	To see that $\mu^*F = f$, suppose that $y \in Y$.
	Since $\{\pi_i:U_i \longrightarrow U\}_{i\in I}$ is a cover, there exists a point $x$ in some $U_i$ with $\pi_i(x) = \mu(y)$.
	Note that $$(\mu^*F)(y) = F(\mu(y)) = F(\pi_i(x)) = F_i(x) = f(\sigma_i(x)) = f(y),$$ where the last equality uses the $\mu^*\G$-invariance of $f$ and the fact that $\mu(\sigma_i(x)) = \pi_i(x) = \mu(y)$. This establishes that $\mu^*F=f$.
	
	It remains only to establish that $F \in \C[X]^\G$. Given $g \in \G\big\vert_U\coloneqq\sss^{-1}(U)\cap\ttt^{-1}(U)$, we have $\sss(g) = \pi_i(x)$ and $\ttt(g) = \pi_j(y)$ for some $x \in U_i$ and $y \in U_j$. This implies that $(\sigma_i(x), \sigma_j(y), g) \in \mu^*\G$. By the $\mu^*\G$-invariance of $f$, we have $$F(\sss(g)) = F(\pi_i(x)) = f(\sigma_i(x)) = f(\sigma_j(y)) = F(\pi_j(y)) = F(\ttt(g)).$$ Since $\G\big\vert_U$ is open in $\G$, it follows that $F \in \C[X]^\G$, completing the proof.
\end{proof}

By replacing Morita morphisms with Hartogs Morita morphisms, we also obtain notions of \defn{Hartogs Morita equivalence}, \defn{Hartogs symplectic Morita equivalence}, and \defn{Hartogs Lagrangian Morita equivalence}.

\subsection{Affine Hamiltonian schemes}

An \defn{affine Poisson scheme} is the data of an affine scheme $X$ and a Poisson bracket on $\mathbb{C}[X]$. Recall that a closed subscheme of $X$ is called \defn{coisotropic} if its ideal is a Poisson subalgebra of $\mathbb{C}[X]$.

Let $\G \tto X$ be an affine symplectic groupoid acting algebraically on an affine Poisson scheme $M$, via a map $\mu : M \longrightarrow X$.
We say that the action is \defn{Hamiltonian} if the graph of the action morphism $\G \fp{\sss}{\mu} M \too M$ is coisotropic in $\G \times M \times M^-$.
We say that an affine Poisson scheme $M$ is an \defn{affine Hamiltonian $\G$-scheme} if it comes equipped with a Hamiltonian action of $\G$.
An \defn{isomorphism} of affine Hamiltonian $\G$-schemes is an equivariant isomorphism of affine Poisson schemes.


\subsection{Affinization of 0-shifted symplectic stacks}

Let $\G \tto X$ be an affine algebraic Lie groupoid.
A \defn{0-shifted symplectic structure} on $\G \tto X$ is a closed $2$-form $\omega$ of constant rank on $X$ satisfying $\sss^*\omega = \ttt^*\omega$ and $\ker \omega = \im \rho$, where $\rho$ is the anchor map.
We show that in this case, the affine quotient $X \sll{} \G$ is Poisson.

First note that we have a short exact sequence of vector bundles
\[
0 \too \ker \omega \too TX \too \im \omega \too 0.
\]
We define the Poisson structure explicitly as follows.
Note that for all $f \in \C[X]^\G$, $\mathrm{d}f \in (\im \rho)^\circ = \im \omega$.
Since every short exact sequence of vector bundles on an affine variety splits, we can choose a global vector field $X_f$ on $X$ such that $i_{X_f}\omega = \mathrm{d}f$, unique up to $\ker \omega$.
The Poisson bracket on $\C[X]^\G$ is then given by
\begin{equation}\label{911he928}
	\{f, g\} \coloneqq \omega(X_f, X_g).
\end{equation}

\begin{lemma}
	Equation \eqref{911he928} defines an affine Poisson scheme structure on $X \sll{} \G$.
\end{lemma}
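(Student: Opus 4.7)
The plan is to verify, in turn, that \eqref{911he928} (i) is independent of the choices of $X_f$ and $X_g$, (ii) lies in $\C[X]^\G$, and (iii) satisfies skew-symmetry, the Leibniz rule, and the Jacobi identity. For (i), the crucial observation is that $\G$-invariance of $f \in \C[X]^\G$ forces $\mathrm{d}f \circ \rho = 0$, obtained by differentiating $\sss^*f = \ttt^*f$ at the identity section; hence $\mathrm{d}f$ annihilates $\im\rho = \ker\omega$, which guarantees both existence of $X_f$ (since $\mathrm{d}f \in (\ker\omega)^\circ$ equals the image of the musical map $\omega^\flat$) and the uniqueness modulo $\ker\omega$ already recorded in the text. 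Well-definedness of $\omega(X_f, X_g)$ then follows at once: replacing $X_f$ by $X_f + Z$ with $Z \in \ker\omega$ changes the expression by $\omega(Z, X_g) = (i_Z\omega)(X_g) = 0$, and analogously in the other slot.

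Step (ii) is the main nontrivial step; it will exploit the multiplicativity $\sss^*\omega = \ttt^*\omega$ at every arrow of $\G$. Given $h \in \G$, I would choose $U, V \in T_h\G$ with $\sss_*(U) = X_f|_{\sss(h)}$ and $\sss_*(V) = X_g|_{\sss(h)}$, which is possible since $\sss$ is a submersion. A short computation combining $\sss^*\omega = \ttt^*\omega$ with $\sss^*f = \ttt^*f$ will show, for any $W \in T_h\G$, that $\omega_{\ttt(h)}(\ttt_*U, \ttt_*W) = \omega_{\sss(h)}(\sss_*U, \sss_*W) = \mathrm{d}f(\sss_*W) = \mathrm{d}f(\ttt_*W)$; since $\ttt$ is a submersion, this means $\ttt_*U$ is a valid representative of $X_f$ at $\ttt(h)$, and similarly $\ttt_*V$ for $X_g$. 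The chain of equalities $\{f, g\}(\ttt(h)) = \omega_{\ttt(h)}(\ttt_*U, \ttt_*V) = \omega_{\sss(h)}(\sss_*U, \sss_*V) = \{f, g\}(\sss(h))$ then yields $\sss^*\{f, g\} = \ttt^*\{f, g\}$.

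For (iii), skew-symmetry is immediate, and the Leibniz rule follows because $f X_g + g X_f$ is a valid representative of $X_{fg}$ in view of $\mathrm{d}(fg) = f\,\mathrm{d}g + g\,\mathrm{d}f$. The Jacobi identity will then be deduced by the standard Cartan-calculus argument: $\mathcal{L}_{X_f}\omega = \mathrm{d}(i_{X_f}\omega) + i_{X_f}\mathrm{d}\omega = 0$ combined with $\mathcal{L}_{X_f}i_{X_g}\omega = \mathrm{d}(X_f(g))$ will give $i_{[X_f, X_g]}\omega = -\mathrm{d}\{f, g\}$, so that $-[X_f, X_g]$ represents $X_{\{f, g\}}$ modulo $\ker\omega$; substituting this into $\mathcal{L}_{X_f}(\omega(X_g, X_h)) = \omega([X_f, X_g], X_h) + \omega(X_g, [X_f, X_h])$ and cycling in $(f, g, h)$ then produces the vanishing of the Jacobi sum. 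Crucially, the $\ker\omega$-indeterminacy of the various $X_{\bullet}$ causes no ambiguity in (iii), because $\G$-invariant functions are annihilated by $\ker\omega = \im\rho$. The main obstacle is step (ii), where one must leverage multiplicativity of $\omega$ at arbitrary arrows of $\G$ rather than just at identity elements.
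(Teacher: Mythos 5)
Your proposal is correct and follows essentially the same route as the paper: well-definedness from the $\ker\omega$-ambiguity, $\G$-invariance of $\{f,g\}$ by lifting $X_f, X_g$ through one of the submersions $\sss,\ttt$ and playing $\sss^*\omega=\ttt^*\omega$ against $\sss^*\mathrm{d}f=\ttt^*\mathrm{d}f$ (the paper lifts along $\ttt$ and pushes by $\sss$, you do the mirror image), and the Jacobi identity from closedness of $\omega$, which you merely spell out via Cartan calculus where the paper leaves it as a remark. No gaps.
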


\begin{proof}
	Let $R$ be a vector subbundle of $TX$ such that $TX = \im \rho \oplus R$.
	We then have an isomorphism $\omega : R \longrightarrow \im \omega = (\im \rho)^\circ$.
	It follows that every $\G$-invariant morphism $f : X \longrightarrow \C$ determines a unique section $X_f$ of $R$ that satisfies $\mathrm{d}f = \omega(X_f)$.
	The Poisson bracket $\{f, g\} = \omega(X_f, X_g)$ is independent of the choice of $R$, as any other choice gives vector fields differing from $X_f$ and $X_g$ by elements of $\ker \omega$.
	
	We need to show that $\{f, g\}$ is $\G$-invariant. To this end, fix $a \in \G$ and let $\hat{X}_f, \hat{X}_g \in T_a\G$ be mapped by $\ttt$ to $X_f$ and $X_g$, respectively.
	Then $$\sss^*(\omega(X_f)_{\sss(a)}) = (\sss^*\mathrm{d}f)_a = (\ttt^*\mathrm{d}f)_a = (\ttt^*(\omega(X_f)))_a = (\ttt^*\omega)_a(\hat{X}_f) = (\sss^*\omega)_a(\hat{X}_f) = \sss^*(\omega(\sss \hat{X}_f)_{\sss(a)}).$$
	It follows that $X_f - \sss\hat{X}_f \in \ker \omega$.
	We therefore have $$\sss^*\{f, g\}(a) = \omega_{\sss(a)}(X_f, X_g) = \omega_{\sss(a)}(\sss \hat{X}_f, \sss \hat{X}_g) = (\sss^*\omega)_a(\hat{X}_f, \hat{X}_g) = (\ttt^*\omega)_a(\hat{X}_f, \hat{X}_g) = \omega_{\ttt(a)}(X_f, X_g) = \ttt^*\{f, g\}(a).$$
	The Jacobi identity follows from the closedness of $\omega$.
\end{proof}

A \defn{symplectic Hartogs Morita morphism} between $0$-shifted symplectic affine algebraic Lie groupoids $\G \tto (X, \omega)$ and $\tilde{\G} \tto (\tilde{X}, \tilde{\omega})$ is a Hartogs Morita morphism $(f, \mu) : (\tilde{\G} \tto \tilde{X}) \to (\G \tto X)$ of the underlying affine algebraic Lie groupoids, such that $\mu^*\omega = \tilde{\omega}$.
We now observe that the affine Poisson scheme of a 0-shifted symplectic groupoid is invariant under symplectic Hartogs Morita morphisms.

\begin{proposition}\label{gln1n21i}
	Let $\G \tto X$ and $\tilde{\G} \tto \tilde{X}$ be 0-shifted affine symplectic groupoids. If there exists a symplectic Hartogs Morita morphism $\tilde{\G} \longrightarrow \G$, then the Poisson schemes $X \sll{} \G$ and $\tilde{X} \sll{} \tilde{\G}$ are isomorphic.
\end{proposition}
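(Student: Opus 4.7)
The plan is to combine Proposition \ref{x2vodfb5} with a direct verification of Poisson compatibility. Any symplectic Hartogs Morita morphism $(f, \mu) : \tilde \G \to \G$ is in particular a Hartogs Morita morphism, so Proposition \ref{x2vodfb5} already provides an isomorphism of affine schemes $\mu^{*} : \C[X]^{\G} \to \C[\tilde X]^{\tilde \G}$. It therefore suffices to check that this isomorphism intertwines the Poisson brackets defined by \eqref{911he928}.

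To this end, fix $F, G \in \C[X]^{\G}$ and set $\tilde F := \mu^{*}F$, $\tilde G := \mu^{*}G$. Choose global Hamiltonian vector fields $X_{F}, X_{G}$ on $X$ and $X_{\tilde F}, X_{\tilde G}$ on $\tilde X$; these exist because $X$ and $\tilde X$ are affine, so the short exact sequence $0 \to \ker \omega \to TX \to \im \omega \to 0$ and its counterpart on $\tilde X$ split globally. The central step is to compare $\mu_{*} X_{\tilde F}$ with $X_{F}$ along $\im \mu$. Given $y \in \tilde X$ and $\tilde v \in T_{y}\tilde X$, combining $\mu^{*}\omega = \tilde \omega$ with the chain rule yields
\[
\omega_{\mu(y)}\bigl(\mu_{*}X_{\tilde F}(y), \mu_{*}\tilde v\bigr) = \tilde \omega_{y}\bigl(X_{\tilde F}(y), \tilde v\bigr) = \mathrm{d}\tilde F_{y}(\tilde v) = \mathrm{d}F_{\mu(y)}(\mu_{*}\tilde v) = \omega_{\mu(y)}\bigl(X_{F}(\mu(y)), \mu_{*}\tilde v\bigr).
\]
Smoothness of $\mu$ makes $\mu_{*} : T_{y}\tilde X \to T_{\mu(y)}X$ surjective, so I conclude that $\mu_{*}X_{\tilde F}(y) - X_{F}(\mu(y)) \in \ker \omega_{\mu(y)}$, and similarly for $G$. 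Since $\ker \omega$ lies in the radical of $\omega$, I then obtain at any $y \in \tilde X$
\[
\{\tilde F, \tilde G\}(y) = \tilde \omega_{y}\bigl(X_{\tilde F}, X_{\tilde G}\bigr) = \omega_{\mu(y)}\bigl(\mu_{*}X_{\tilde F}, \mu_{*}X_{\tilde G}\bigr) = \omega_{\mu(y)}\bigl(X_{F}, X_{G}\bigr) = \mu^{*}\{F, G\}(y),
\]
which establishes $\mu^{*}\{F,G\} = \{\mu^{*}F, \mu^{*}G\}$.

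The only subtlety, and the expected main obstacle, is that Hamiltonian vector fields $X_{F}$ are defined only modulo sections of $\ker\omega$; one cannot hope for a naive equality $\mu_{*}X_{\tilde F} = X_{F}$. What the smoothness of $\mu$ provides is precisely that the ambiguous difference lands in $\ker\omega$ along $\im\mu$, and what the definition of the bracket provides is precisely that $\omega$ annihilates $\ker\omega$, so the resulting pairing is insensitive to that ambiguity. The codimension hypothesis in the definition of a Hartogs Morita morphism plays no further role at this stage, having already been absorbed into the appeal to Proposition \ref{x2vodfb5}.
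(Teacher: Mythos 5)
Your proposal is correct and follows essentially the same route as the paper: invoke Proposition \ref{x2vodfb5} for the algebra isomorphism, then use $\mu^*\omega = \tilde{\omega}$ together with the submersivity of $\mu$ to see that $\mu_*X_{\mu^*f}$ is a Hamiltonian vector field for $f$, so the bracket is preserved. Your explicit remark that the comparison only holds modulo $\ker\omega$, and that the bracket is insensitive to this ambiguity, is a slightly more careful phrasing of the same computation the paper carries out.
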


\begin{proof}
	Let $\mu : \tilde{X} \longrightarrow X$ be the map on objects coming from a Hartogs Morita morphism $\tilde{\G} \longrightarrow \G$.
	By Proposition \ref{x2vodfb5}, the map $\mu^* : \C[X]^\G \to \C[\tilde{X}]^{\tilde{\G}}$ is an algebra isomorphism. We also have $$\mu^*(i_{\mu(X_{\mu^*f})}\omega) = i_{X_{\mu^*f}} \mu^*\omega = i_{X_{\mu^*f}} \tilde{\omega} = \mathrm{d} \mu^*f = \mu^*\mathrm{d}f$$
	for all $f\in\mathbb{C}[X]$.
	Since $\mu$ is a submersion, we have $i_{\mu(X_{\mu^*f})} \omega = \mathrm{d}f$ for all $f\in\mathbb{C}[X]$. It follows that $\mu(X_{\mu^*f}) = X_f$ for all $f\in\mathbb{C}[X]$, implying that $\mu^*$ is also a Poisson algebra morphism.
\end{proof}

\begin{lemma}\label{4k6tno9w}
	Let $\G \tto (X, \omega)$ be an 0-shifted symplectic affine algebraic Lie groupoid and $Y \s X$ a smooth closed subvariety such that $TY^\omega = TY + \im \rho$.
	Let $I \s \C[X]$ be the ideal corresponding to $Y$ and set $J \coloneqq I \cap \C[X]^\G$.
	Then $J$ is a Poisson ideal and hence corresponds to a coisotropic subvariety of $X \sll{} \G$.
	The condition that $TY^\omega = TY + \im \rho$ holds, in particular, if $Y$ is isotropic, $\dim Y = \tfrac{1}{2} \dim X$, and $\dim(\ker \omega \cap TY) = \tfrac{1}{2} \dim \ker \omega$.
\end{lemma}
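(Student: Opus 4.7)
The plan is to reduce the first assertion to two facts: $Y$ is automatically isotropic under the hypothesis, and every $f \in J$ admits a Hamiltonian vector field tangent to $Y$ along $Y$. The second assertion is then a dimension count.

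First I note that the hypothesis $TY^\omega = TY + \im\rho$ forces $TY \subset TY^\omega$, so $Y$ is isotropic. Next, for $f \in J$, I claim there exists a vector field $X_f$ on $X$ with $i_{X_f}\omega = \mathrm{d}f$ and $X_f|_Y \subset TY$. Indeed, pick any initial $X_f^{(0)}$ satisfying $i_{X_f^{(0)}}\omega = \mathrm{d}f$; since $f$ vanishes on $Y$, one has $\omega(X_f^{(0)}|_y, v) = \mathrm{d}f(v) = 0$ for every $v \in T_yY$, so $X_f^{(0)}|_y \in (T_yY)^\omega = T_yY + \ker\omega_y$. The class of $X_f^{(0)}|_Y$ in $TX|_Y/TY$ therefore lies in the image of $\ker\omega|_Y$ and defines a section $\sigma$ of the vector bundle $N \coloneqq \ker\omega|_Y/(\ker\omega \cap TY)$ on $Y$; here $\ker\omega \cap TY$ has constant rank along $Y$ by combining the hypothesis with the general identity $\dim V^\omega = \dim X - \dim V + \dim(V \cap \ker\omega)$. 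Because $X$ and $Y$ are affine, the composition of restriction and quotient $\Gamma(X, \ker\omega) \twoheadrightarrow \Gamma(Y, \ker\omega|_Y) \twoheadrightarrow \Gamma(Y, N)$ is surjective, lifting $\sigma$ to a global section $v$ of $\ker\omega$ on $X$. Then $X_f \coloneqq X_f^{(0)} - v$ works, since $i_v\omega = 0$.

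Given $f, g \in J$, I pick $X_f$ and $X_g$ as above, both tangent to $Y$ along $Y$, and compute $\{f, g\}(y) = \omega_y(X_f|_y, X_g|_y) = 0$ by isotropy of $Y$. Since $\{f, g\} \in \C[X]^\G$ automatically (the Poisson bracket is well-defined on $\G$-invariants by the preceding discussion), this gives $\{f, g\} \in J$, establishing the first assertion. The main obstacle is globalizing the pointwise tangency correction, which is handled by the affine surjectivity argument above.

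For the final assertion I compare dimensions. The inclusion $TY + \im\rho \subset TY^\omega$ is automatic from isotropy of $Y$ together with $\im\rho = \ker\omega \subset TY^\omega$. Using $\dim Y = \tfrac{1}{2}\dim X$ and $\dim(\ker\omega \cap TY) = \tfrac{1}{2}\dim\ker\omega$, inclusion--exclusion gives $\dim(TY + \ker\omega) = \tfrac{1}{2}(\dim X + \dim\ker\omega)$, while the rank formula used above yields $\dim TY^\omega = \tfrac{1}{2}(\dim X + \dim\ker\omega)$. Equality of dimensions upgrades the inclusion to equality.
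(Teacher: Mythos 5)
Your proof is correct, and the second assertion is handled by exactly the same dimension count as in the paper. For the main assertion, however, you take a more laborious route than necessary. The paper's argument stops where yours begins: from $f,h\in J$ one only needs $X_f\vert_Y\in TY^\omega=TY+\im\rho$ (because $f$ vanishes on $Y$) together with the fact that $\mathrm{d}h\vert_Y$ annihilates \emph{both} summands --- it kills $TY$ since $h\in I$, and it kills $\im\rho$ since $h$ is $\G$-invariant --- so that $\{f,h\}\vert_Y=-\mathrm{d}h(X_f)\vert_Y=0$ with no modification of $X_f$ at all. You instead correct $X_f$ by a global section of $\ker\omega$ to make it tangent to $Y$, which forces you to verify that $\ker\omega\cap TY$ has constant rank and to invoke surjectivity of restriction of sections of coherent sheaves on affine varieties; these steps are all valid (and your observation that the hypothesis $TY^\omega=TY+\im\rho$ already forces $Y$ to be isotropic is a nice byproduct that the paper never needs to state), but they are extra machinery that the annihilator argument bypasses. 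One small point worth making explicit in your write-up: the existence of $X_f^{(0)}$ with $i_{X_f^{(0)}}\omega=\mathrm{d}f$ uses $\mathrm{d}f\in(\im\rho)^\circ=\im\omega$, i.e.\ the $\G$-invariance of $f$, which is where the restriction to $J\subseteq\C[X]^\G$ (rather than all of $I$) enters.
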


\begin{proof}
	Let $f, h \in J$.
	We have $X_f\big\vert_Y \in TY^\omega = TY + \im \rho$ and $\mathrm{d}h\big\vert_Y \in TY^\circ \cap (\im \rho)^\circ$, so that $$\{f, h\}\big\vert_Y = \omega(X_f, X_h)\big\vert_Y = -\mathrm{d}h(X_f)\big\vert_Y = 0.$$
	For the last statement, first note that $TY + \im \rho = TY + \ker \omega \s TY^\omega$ by isotropy of $Y$.
	We show that this inclusion is an equality by a dimension count. Indeed, we have $\dim(TY^\omega) = \dim X - \dim Y + \dim(\ker \omega \cap TY) = \dim Y + \dim \ker \omega - \dim(\ker \omega \cap TY)  = \dim(TY + \ker \omega)$.
\end{proof}

\subsection{Affinization of 1-shifted Lagrangians}
The following lemma is useful.
\begin{lemma}\label{95qjh5ie}
	Let $\G \tto X$ be an affine symplectic groupoid and $\H \tto Y$ an affine algebraic Lie groupoid.
	Suppose that $\G \times \H$ acts on a smooth affine variety $M$ with moment map $(\mu, \nu) : M \longrightarrow X \times Y$.
	Suppose also that the projection $(\G \times \H) \ltimes M \to \G$ has a 1-shifted Lagrangian structure $\omega \in \Omega_M^2$ for which $\ker \omega \s \ker \mu_*$ and the infinitesimal $\H$-action on $M$ has constant rank.
	Then $\omega$ is a 0-shifted symplectic form on $\H \ltimes M \tto M$, and the induced Poisson scheme $M \sll{} \H$ is a Hamiltonian $\G$-scheme.
\end{lemma}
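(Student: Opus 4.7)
The statement splits into two parts: $\omega$ descends to a 0-shifted symplectic form on $\H \ltimes M \tto M$, and the resulting Poisson scheme $M \sll{} \H$ is a Hamiltonian $\G$-scheme. The plan is to unpack the 1-shifted Lagrangian conditions of Definition \ref{yhan52mv} applied to $(\G \times \H) \ltimes M \to \G$ for the first part, and then invoke the scheme-theoretic coisotropic reduction framework of \cite{cro-may:24} (in the spirit of Lemma \ref{4k6tno9w}) for the second.

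For the first part, I would verify the four defining conditions of a 0-shifted symplectic structure in turn. Closedness of $\omega$ follows from condition (ii) combined with $\phi_\G = 0$ (since $\G$ is symplectic rather than merely quasi-symplectic). The multiplicativity $\sss^*\omega = \ttt^*\omega$ on arrows of $\H \ltimes M$ comes from restricting condition (i) to the sub-Lie groupoid $\H \ltimes M \hookrightarrow (\G \times \H) \ltimes M$ of arrows $((1_{\mu(p)}, h), p)$: the projection to $\G$ factors through the identity section, and $\uuu^*\omega_\G = 0$ for multiplicative forms makes the right side vanish. The equality $\ker \omega = \im \rho_{\H \ltimes M}$ uses the non-degeneracy isomorphism in condition (iii): the inclusion $\supseteq$ follows by lifting elements $(0, a_h) \in A_{(\G \times \H) \ltimes M}$ through the isomorphism, while $\subseteq$ uses the hypothesis $\ker \omega \s \ker \mu_*$ to place $v \in \ker \omega$ as the pair $(v, 0)$ in the target of the isomorphism, whose preimage then lies in the $A_\H$-summand. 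Constant rank of $\omega$ is then immediate from the constant rank assumption on the infinitesimal $\H$-action.

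For the second part, the commuting $\G$- and $\H$-actions ensure the $\G$-action descends to $M \sll{} \H$, and $\H$-equivariance of the $\G \times \H$-moment map makes $\mu$ descend to a morphism $M \sll{} \H \to X$. To establish coisotropy of the action graph in $\G \times (M \sll{} \H) \times (M \sll{} \H)^-$, I would first note that its lift $\tilde{\Gamma} \s \G \times M \times M^-$, the graph of the $\G$-action on $M$, is isotropic for the 2-form $\omega_\G \oplus \omega \oplus (-\omega)$: this is precisely condition (i) restricted to arrows $((g, 1_{\nu(p)}), p)$, which reads $g^*\omega_\G = (g \cdot -)^*\omega - \omega$. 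I would then apply descent, quotienting by the $\H \times \H$-action on the $M \times M^-$-factor, to conclude that the ideal of $\tilde{\Gamma}$ inside $\C[\G] \otimes \C[M \sll{} \H] \otimes \C[M \sll{} \H]$ is a Poisson ideal.

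The main obstacle will be the descent step: rigorously verifying the dimension and transversality hypotheses required to apply a product version of Lemma \ref{4k6tno9w}, particularly since the $\G$-factor is a symplectic groupoid (giving a Poisson structure on $X$) rather than a 0-shifted symplectic one. The hypothesis $\ker \omega \s \ker \mu_*$ will be essential here: combined with the first part, it yields $\im \rho_\H \s \ker \mu_*$, which is precisely the infinitesimal $\H$-invariance of $\mu$ needed to make the $\H \times \H$-reduction compatible with $\tilde{\Gamma}$.
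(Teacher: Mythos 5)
Your proposal is correct and follows essentially the same route as the paper: the same verification of the 0-shifted symplectic conditions from Definition \ref{yhan52mv} together with the hypothesis $\ker\omega \s \ker\mu_*$ (including the identity-section trick for $\sss^*\omega = \ttt^*\omega$), and the same reduction of the Hamiltonian property to Lemma \ref{4k6tno9w} applied to the graph of the $\G$-action inside $\G \times M \times M^-$. The ``main obstacle'' you flag is in fact minor: the trivial groupoid over the symplectic manifold $\G$ is itself a $0$-shifted symplectic affine algebraic Lie groupoid, so the product groupoid is handled directly, and the remaining hypothesis of Lemma \ref{4k6tno9w} follows from the one-line computation $\ker\eta = \{(0,u,v) : u,v \in \ker\omega\}$ and $T\Gamma \cap \ker\eta = \{(0,v,v) : v \in \ker\omega\}$ (which uses exactly your observation that $\ker\omega \s \ker\mu_*$), combined with the isotropy and half-dimensionality of the graph.
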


\begin{proof}
	Let $\omega$ be the 1-shifted Lagrangian structure on $(\G \times \H) \ltimes M \longrightarrow \G$. Note that $\omega$ is a closed 2-form on $M$ with $\ttt^*\omega - \sss^*\omega = \pr_\G^*\Omega$, where $\Omega$ is the symplectic form on $\G$ and $(\sss, \ttt) : (\G \times \H) \ltimes M \tto M$ are the source and target maps. We also know that the map
	\begin{align*}
		\mu^*A_\G \oplus \nu^*A_\H &\too \{(v, a) \in TM \times A_\G : \mu_*v = \rho a \text{ and } i_v\omega = \mu^*\sigma_\Omega a\} \\
		(a, b) & \mtoo (\psi_*(a, b), a)
	\end{align*}
	is an isomorphism, where $\psi_*$ is the action map.
	Let $(\tilde{\sss}, \tilde{\ttt}) : \H \ltimes M \tto M$ be the source and target maps of the $\H$-action.
	Then $\tilde{\sss} = \sss \circ i$ and $\tilde{\ttt} = \ttt \circ i$, where $i : \H \ltimes M \to (\G \times \H) \ltimes M$ is given by $i(h, p) = (\uuu_{\mu(p)}, h, p)$.
	In particular, $\tilde{\ttt}^*\omega - \tilde{\sss}^*\omega = i^*(\ttt^*\omega - \sss^*\omega) = i^*\pr_\G^*\Omega = 0$, since $\pr_\G \circ i : \H \ltimes M \to \G$ has its image in the identity section.
	The non-degeneracy condition then shows that
	\[
	\ker \omega \cap \ker \mu_* = \im \rho_\H.
	\]
	for the anchor map $\rho_{\H}$ for $\H$. 
	Since we have $\ker \omega \s \ker \mu_*$ by assumption,
	\[
	\ker \omega = \im \rho_\H.
	\]
	It follows that $\omega$ is a $0$-shifted symplectic form, and hence descends to a Poisson bracket on $M \sll{} \H$.
	It remains to check that the residual action of $\G$ on $M \sll{} \H$ is Hamiltonian.
	By Lemma \ref{4k6tno9w}, it suffices to show that the graph $\Gamma \s \G \times M \times M^-$ of the action satisfies $\dim(\ker \eta \cap T\Gamma) = \tfrac{1}{2} \ker \eta$, where $\eta \coloneqq (\Omega, \omega, -\omega)$.
	But $\ker \eta = \{(0, u, v) : u, v \in \ker \omega\}$ and  $T\Gamma \cap \ker \eta = \{(0, v, v) : v \in \ker \omega\}$.
\end{proof}

The following generalization is natural and important.

\begin{theorem}[Affinization of 1-shifted Lagrangian correspondences]\label{sqyjpvje}
	Let $\G_1 \tto M_1$ and $\G_2 \tto M_2$ be affine symplectic groupoids and
	\begin{equation}\label{0uxd5xks}
		\begin{tikzcd}[row sep={3em,between origins},column sep={4em,between origins}]
			& \L \arrow[swap]{dl}{f_1} \arrow{dr}{f_2} \arrow[shift left=2pt]{d} \arrow[shift right=2pt]{d} & \\
			\G_1 \arrow[shift left=2pt]{d} \arrow[shift right=2pt]{d} & N \arrow{dl}{\mu_1} \arrow[swap]{dr}{\mu_2} & \G_2 \arrow[shift left=2pt]{d} \arrow[shift right=2pt]{d} \\
			M_1 & & M_2.
		\end{tikzcd}
	\end{equation}
	an affine 1-shifted Lagrangian correspondence.
	Then the affine quotient
	\begin{equation}\label{3eh6qc5x}
		\L\aff \coloneqq (\G_1 \fp{\sss}{\mu_1} N \fp{\mu_2}{\ttt} \G_2) \sll{} \L
	\end{equation}
	is a Hamiltonian $\G_1 \times \G_2^-$-scheme, where $\L$ acts via $l \cdot (g_1, n, g_2) = (g_1f_1(l)^{-1}, \ttt(l), f_2(l) g_2)$ if $\sss(l) = n$.
	If $\G_1 \longleftarrow \L' \longrightarrow \G_2$ is another affine 1-shifted Lagrangian correspondence that is Hartogs weakly equivalent to \eqref{0uxd5xks}, then $\L\aff$ and $(\L')\aff$ are isomorphic Hamiltonian $\G_1 \times \G_2^-$-schemes.
\end{theorem}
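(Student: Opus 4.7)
The plan is to present the 1-shifted Lagrangian correspondence \eqref{0uxd5xks} in an equivalent ``action-groupoid'' form and then invoke Lemma \ref{95qjh5ie}. First, I would apply Lemma \ref{rw45fz6q} on both sides to compose $\L$ with the identity 1-shifted Lagrangian $\G_i \longleftarrow \G_i \longrightarrow \G_i$ for $i=1,2$, expand the resulting homotopy fibre products, and obtain a weakly equivalent 1-shifted Lagrangian correspondence
\[
\begin{tikzcd}[row sep={2em,between origins},column sep={4em,between origins}]
& (\G_1 \times \G_2^- \times \L) \ltimes X \arrow{dl} \arrow{dr} & \\
\G_1 & & \G_2,
\end{tikzcd}
\]
where $X = \G_1 \fp{\sss}{\mu_1} N \fp{\mu_2}{\ttt} \G_2$, the $\G_i$ act by left/right multiplication on their respective factors, and $\L$ acts on the middle as in the statement. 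The composition formula for 1-shifted Lagrangians produces a canonical 2-form $\omega_X$ on $X$, and by Example \ref{cb2gi3le} this exhibits $X$ as a Hamiltonian $(\G_1 \times \G_2^- \times \L)$-space.

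Next, I would apply Lemma \ref{95qjh5ie} with $\G = \G_1 \times \G_2^-$, $\H = \L$, and $M = X$. Two hypotheses require verification: the infinitesimal $\L$-action on $X$ has constant rank, and $\ker \omega_X \subseteq \ker \mu_*$ for the $(\G_1 \times \G_2^-)$-moment map $\mu : X \to M_1 \times M_2$ given by $(h_1, n, h_2) \mapsto (\ttt(h_1), \sss(h_2))$. Both should fall out of the non-degeneracy axiom \ref{dhs45mwu} of Definition \ref{yhan52mv}: the isomorphism there identifies the kernel of the infinitesimal $\L$-action pointwise with $\ker \rho_\L$ (hence constant rank), and the same isomorphism, read in reverse at a point $p \in X$ where $\omega_X$ vanishes, forces the $\G$-components of such tangent vectors to lie in $\ker (\sss, \ttt)_*$, yielding the desired inclusion. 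Lemma \ref{95qjh5ie} then delivers $\L\aff = X \sll{} \L$ as a Hamiltonian $(\G_1 \times \G_2^-)$-scheme with the stated action. The main technical obstacle is expected to be the explicit identification of the 2-form $\omega_X$ coming from the composition of 1-shifted Lagrangians, since both hypotheses reduce to pointwise bookkeeping in its formula.

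For invariance under Hartogs weak equivalence, I would use Proposition \ref{gln1n21i}. A Hartogs weak equivalence $\L \simeq \L'$ supplies a common cover $\M$ together with Hartogs Morita morphisms $\M \to \L$ and $\M \to \L'$ compatible with the 1-shifted Lagrangian structures. Running the action-groupoid construction of the first paragraph uniformly for $\L$, $\L'$, and $\M$ produces affine varieties $X_\L, X_{\L'}, X_\M$ and Hartogs Morita morphisms of 0-shifted symplectic groupoids $\M \ltimes X_\M \to \L \ltimes X_\L$ and $\M \ltimes X_\M \to \L' \ltimes X_{\L'}$. Proposition \ref{gln1n21i} then yields Poisson isomorphisms $\L\aff \cong X_\M \sll{} \M \cong (\L')\aff$. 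Equivariance with respect to $\G_1 \times \G_2^-$ is automatic, since the Morita morphisms in the action-groupoid picture commute with the residual $(\G_1 \times \G_2^-)$-actions by construction.
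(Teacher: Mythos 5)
Your proposal is correct in outline and takes essentially the same route as the paper: rewrite the correspondence, via composition with correspondences (weakly equivalent to) the identities, as an action groupoid $(\G_1\times\L\times\G_2)\ltimes(\G_1 \fp{\sss}{\mu_1} N \fp{\mu_2}{\ttt} \G_2)$, feed it into Lemma \ref{95qjh5ie}, and obtain invariance from Proposition \ref{gln1n21i}. Two small corrections to the justification: the hypothesis $\ker\omega_X\subseteq\ker\mu_*$ is not a consequence of the non-degeneracy axiom \ref{dhs45mwu} of the Lagrangian structure (inside Lemma \ref{95qjh5ie} that axiom only yields $\ker\omega_X\cap\ker\mu_*=\im\rho_\L$); it follows instead from the symplectic groupoid identities $(\ker\sss_*)^{\omega_1}=\ker\ttt_*$ and $(\ker\ttt_*)^{\omega_2}=\ker\sss_*$ on the $\G_1$ and $\G_2$ factors, as in the paper's proof. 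Also, in the invariance step the induced map of action groupoids is not the naive one: it must be twisted by the natural transformations of the 2-commutative diagram (in the paper, $(l,(g_1,n,g_2))\mapsto(\psi(l),(g_1\theta_1(n),\psi(n),\theta_2(n)^{-1}g_2))$), and the 2-form compatibility in the definition of (Hartogs) weak equivalence is precisely what makes this a morphism of 0-shifted symplectic groupoids, so this point deserves explicit verification rather than being folded into ``compatible with the 1-shifted Lagrangian structures.''
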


\begin{proof}
	Consider the 1-shifted Lagrangian correspondences
	\begin{equation}\label{iidsle4p}
		\begin{tikzcd}[row sep={3em,between origins},column sep={4em,between origins}]
			& (\G_1 \times \G_1) \ltimes \G_1 \arrow{dl} \arrow{dr} 
			& & \L \arrow{dl} \arrow{dr} & & (\G_2 \times \G_2) \ltimes \G_2 \arrow{dl} \arrow{dr} & \\
			\G_1 & & \G_1 & & \G_2 & & \G_2;
		\end{tikzcd}
	\end{equation}
	see Example \ref{gesm9jcm}.
	Their composition is the action groupoid
	\[
	\begin{tikzcd}[row sep={3em,between origins},column sep={4em,between origins}]
		& (\G_1 \times \L \times \G_2) \ltimes (\G_1 \fp{\sss}{\mu_1} N \fp{\mu_2}{\ttt} \G_2) \arrow{dl} \arrow{dr} & \\
		\G_1 & & \G_2
	\end{tikzcd}
	\]
	together with the 1-shifted Lagrangian structure given by the 2-form
	\[
	\omega \coloneqq \pr_{\G_1}^*\omega_1 + 2 \pr_N^*\gamma + \pr_{\G_2}^*\omega_2
	\]
	on $\G_1 \fp{\sss}{\mu_1} N \fp{\mu_2}{\ttt} \G_2$, where $\gamma$ is the 1-shifted Lagrangian structure on $\L$.
	
	To show that the affine quotient \eqref{3eh6qc5x} is a Hamiltonian $\G_1 \times \G_2^-$-scheme, we apply Lemma \ref{95qjh5ie}. A first step is to check that $\ker \omega \s \ker \mu_*$, where
	\[
	\mu : \G_1 \fp{\sss}{\mu_1} N \fp{\mu_2}{\ttt} \G_2 \too M_1 \times M_2, \qquad \mu(g_1, n, g_2) = (\ttt(g_1), \sss(g_2))
	\]
	is the moment map for the action of $\G_1 \times \G_2$ on $\G_1 \fp{\sss}{\mu_1} N \fp{\mu_2}{\ttt} \G_2$.
	Let $(v_1, u, v_2) \in T_{(g_1, n, g_2)}(\G_1 \fp{\sss}{\mu_1} N \fp{\mu_2}{\ttt} \G_2)$ be such that $(v_1, u, v_2) \in \ker \omega$.
	It follows that $v_1 \in (\ker \sss_*)^{\omega_1} = \ker \ttt_*$ and $v_2 \in (\ker \ttt_*)^{\omega_2} = \ker \sss_*$, so that $\mu_*(v_1, u, v_2) = 0$.
	A second step is to check that the infinitesimal $\L$-action on $\G_1 \fp{\sss}{\mu_1} N \fp{\mu_2}{\ttt} \G_2$ has constant rank.
	The kernel of this infinitesimal action is given by $\ker f_{1*} \cap \ker \rho_\L \cap \ker f_{2*}$; it is trivial by the definition of 1-shifted Lagrangians, i.e.\ the injectivity part of Definition \ref{yhan52mv}\ref{dhs45mwu}.
	By Lemma \ref{95qjh5ie}, \eqref{3eh6qc5x} is a Hamiltonian $\G_1 \times \G_2^-$-scheme.

	Consider another affine 1-shifted Lagrangian correspondence
	\begin{equation}\label{2qs7zlsf}
		\begin{tikzcd}[row sep={3em,between origins},column sep={4em,between origins}]
			& \L' \arrow[swap]{dl}{f_1'} \arrow{dr}{f_2'} \arrow[shift left=2pt]{d} \arrow[shift right=2pt]{d} & \\
			\G_1 \arrow[shift left=2pt]{d} \arrow[shift right=2pt]{d} & N' \arrow{dl}{\mu_1'} \arrow[swap]{dr}{\mu_2'} & \G_2 \arrow[shift left=2pt]{d} \arrow[shift right=2pt]{d} \\
			M_1 & & M_2.
		\end{tikzcd}
	\end{equation}
	that is Hartogs Morita equivalent to \eqref{0uxd5xks}.
	To show that our two correspondences yield isomorphic Hamiltonian schemes, it suffices to consider the following case: there is a Hartogs Morita morphism $\psi : \L' \longrightarrow \L$ fitting into a 2-commutative diagram
	\[
	\begin{tikzcd}[row sep={3em,between origins},column sep={3em,between origins}]
		& \L' \arrow[swap]{dl}{f_1'} \arrow{dr}{f_2'} \arrow{dd}{\psi} & \\
		\G_1 & & \G_2 \\
		& \L \arrow{ul}{f_1} \arrow[swap]{ur}{f_2} & 
	\end{tikzcd}
	\]
	with respect to some natural transformations $\theta_1 : f_1\psi \Longrightarrow f_1'$ and $\theta_2 : f_2\psi \Longrightarrow f_2'$.
	In this case, $\psi$ induces a Hartogs Morita morphism
	\begin{align*}
		\L' \ltimes (\G_1 \fp{\sss}{\mu_1'} N' \fp{\mu_2'}{\ttt} \G_2) 
		&\too
		\L \ltimes (\G_1 \fp{\sss}{\mu_1} N \fp{\mu_2}{\ttt} \G_2) \\
		(l, (g_1, n, g_2)) &\mtoo (\psi(l), (g_1 \theta_1(n), \psi(n), \theta_2(n)^{-1} g_2)).
	\end{align*}
	Proposition \ref{gln1n21i} implies that the induced Poisson schemes are isomorphic.
	Since the isomorphism is $\G_1 \times \G_2^-$ equivariant, this provides an isomorphism of Hamiltonian $\G_1 \times \G_2^-$-schemes.
\end{proof}

\subsection{The algebraic Moore--Tachikawa category}
We now 
define a category $\AMT$ in which the aforementioned affinizations of TQFTs take values. Our category turns out to enlarge Moore and Tachikawa's category $\MT$ of holomorphic symplectic varieties; see Section \ref{Section: Special case}. These considerations explain our choosing the notation $\AMT$. 

We begin with precise definitions. The objects of $\AMT$ are affine symplectic groupoids. To define morphisms, suppose that $\G$ and $\I$ are affine symplectic groupoids. Suppose also that $M$ and $N$ are affine Poisson schemes, equipped with commuting Hamiltonian actions of $\G$ and $\I^-$. Declare $M$ and $N$ to be \textit{isomorphic} if there exists an affine Poisson scheme isomorphism $M\longrightarrow N$ that intertwines the actions of $\G$ and $\I^-$. Morphisms from $\G$ to $\I$ are then defined to be isomorphism classes of affine Poisson schemes carrying commuting Hamiltonian actions of $\G$ and $\I^-$. To define morphism composition, consider affine symplectic groupoids $\G\tto X$, $\I\tto Y$, and $\mathcal{K}\tto Z$. Let us also consider $[M]\in\mathrm{Hom}(\G,\I)$ and $[N]\in\mathrm{Hom}(\I,\mathcal{K})$.
It follows that $\I \tto Y$ acts diagonally on the fibre product $M \times_Y N$.
By \cite[Subsection 4.3]{cro-may:24}, the affine quotient $(M \times_Y N) \sll{} \I$ is a Poisson scheme with commuting Hamiltonian actions of $\G$ and $\K^-$.
The Poisson structure is obtained by reduction, i.e. it is characterized by the relation $\{j^*f_1, j^*f_2\} = j^*\{f_1, f_2\}$ for all $f_1, f_2 \in \C[M \times N]$ such that $j^*f_1, j^*f_2 \in \C[M \times_Y N]^\I$, where $j : M \times_Y N \too M \times N$ is inclusion.


The previous paragraph and a routine exercise reveal that $\AMT$ is a category. It turns out that $\AMT$ also carries a symmetric monoidal structure. One defines the tensor product of affine symplectic groupoids $\G$ and $\I$ to be the affine symplectic groupoid $\G\otimes\I\coloneqq\G\times\I$. Given affine symplectic groupoids $\G$, $\I$, $\mathcal{K}$, and $\mathcal{L}$, and morphisms $[M]\in\mathrm{Hom}(\G,\I)$ and $[N]\in\mathrm{Hom}(\mathcal{K},\mathcal{L})$, note that $M\times N$ is an affine Hamiltonian $(\G\times\mathcal{K})\times(\mathcal{I}^- \times\mathcal{L}^-)$-scheme. One thereby has $$[M]\otimes[N]\coloneqq[M\times N]\in\mathrm{Hom}(\G\times\mathcal{K},\I\times\mathcal{L})$$ The unit object in $\AMT$ is the singleton groupoid $\{e\}\longrightarrow\{*\}$. The associator, left unitor, right unitor, and braiding are then exactly as one would expect. A routine exercise reveals that $\AMT$ is indeed a symmetric monoidal category.

\subsection{A ``functor"} Consider the sub-``category'' $\AWSQ$ of $\WSQ$ consisting of affine symplectic groupoids and affine algebraic 1-shifted Lagrangian correspondences.
We construct a ``functor''
\[
\AWSQ \too \AMT
\]
as follows.
Consider a morphism
\[
\begin{tikzcd}[row sep={2em,between origins},column sep={4em,between origins}]
	& \L \arrow{dl} \arrow{dr} & \\
	\G_1 & & \G_2.
\end{tikzcd}
\]
in $\AWSQ$.
Theorem \ref{sqyjpvje} implies that $\L\aff$ is a morphism from $\G_1$ to $\G_2$ in $\AMT$.
The next proposition shows that $\L \mto \L\aff$ behaves like a functor from $\AWSQ$ to $\AMT$.

\begin{proposition}\label{6x7y2609}
	The association $\L \mto \L\aff$ has the following properties.
	\begin{enumerate}[label={\textup{(\roman*)}}]
		\item\label{1afgzrs6}
		It is well-defined: if $\L_1$ and $\L_2$ are weakly equivalent 1-shifted Lagrangian correspondences from $\G_1$ to $\G_2$, then $\L_1\aff$ and $\L_2\aff$ are isomorphic Hamiltonian $\G_1 \times \G_2^-$-schemes.
		\item\label{yrvc6ed6}
		It preserves composition: if $\G_1 \longleftarrow \L_1 \longrightarrow \G_2$ and $\G_2 \longleftarrow \L_2 \longrightarrow \G_3$ are composable 1-shifted Lagrangian correspondences, then $(\L_2 \circ \L_1)\aff$ and $\L_2\aff \circ \L_1\aff$ are isomorphic Hamiltonian $\G_1 \times \G_3^-$-schemes.
		\item\label{venft4n2}
		It preserves identities: for every affine symplectic groupoid $\G$, the affinization of the identity $\G \longleftarrow \G \longrightarrow \G$ in $\WS$ is isomorphic to the identity of $\G$ in $\AMT$.
	\end{enumerate}
\end{proposition}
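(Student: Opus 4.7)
The plan is to handle (i) and (iii) quickly, then concentrate on (ii). Claim (i) follows immediately from Theorem \ref{sqyjpvje}: an algebraic weak equivalence of 1-shifted Lagrangian correspondences is a genuine Morita morphism of the underlying algebraic Lie groupoids, and hence in particular a Hartogs Morita equivalence, so Theorem \ref{sqyjpvje} yields an isomorphism $\L_1\aff \cong \L_2\aff$ of Hamiltonian $\G_1 \times \G_2^-$-schemes. For (iii), the identity correspondence $\Id_\G : \G \longleftarrow \G \longrightarrow \G$ has $N = M$ with all structure maps being identities, so Theorem \ref{sqyjpvje} gives
\[
\Id_\G\aff \cong (\G * \G) \sll{} \G,
\]
with $g \in \G$ acting by $g \cdot (g_1, g_2) = (g_1 g^{-1}, g g_2)$. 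The orbits of this action are exactly the fibres of multiplication, so $\mmm : \G * \G \to \G$ descends to an isomorphism $(\G * \G) \sll{} \G \cong \G$ that intertwines the residual $\G \times \G^-$-action with left and right multiplication on $\G$. This is precisely the identity of $\G$ in $\AMT$.

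For (ii), I would realize both sides as affine quotients of the common six-factor product
\[
Z = \G_1 \fp{\sss}{\mu_{11}} N_1 \fp{\mu_{12}}{\ttt} \G_2 \fp{\sss}{\mu_{22}} N_2 \fp{\mu_{23}}{\ttt} \G_3.
\]
By Theorem \ref{sqyjpvje}, the left-hand side is $(\L_2 \circ \L_1)\aff = Z \sll{} (\L_1 \htimes_{\G_2} \L_2)$. For the right-hand side, set $Y_i \coloneqq \G_i \times_{M_i} N_i \times_{M_{i+1}} \G_{i+1}$ so that $\L_i\aff = Y_i \sll{} \L_i$. Because $\L_1$ and $\L_2$ act on disjoint factors, I expect
\[
\L_1\aff \times_{M_2} \L_2\aff \cong (Y_1 \times_{M_2} Y_2) \sll{} (\L_1 \times \L_2),
\]
and a further quotient by the diagonal $\G_2$-action gives $\L_2\aff \circ \L_1\aff \cong (Y_1 \times_{M_2} Y_2) \sll{} (\L_1 \times \G_2 \times \L_2)$. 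Writing the middle $\G_2$-factors of a point of $Y_1 \times_{M_2} Y_2$ as $(h_1, h_2)$ with $\sss(h_1) = \ttt(h_2)$, the diagonal $\G_2$-action $k \cdot (h_1, h_2) = (h_1 k^{-1}, k h_2)$ is free with orbits parameterized by the product $h_1 h_2$; the multiplication map on these two factors then induces an identification $(Y_1 \times_{M_2} Y_2) \sll{} \G_2 \cong Z$. Transferring the residual $\L_1 \times \L_2$-action along this identification should recover the $\L_1 \htimes_{\G_2} \L_2$-action used in Theorem \ref{sqyjpvje}, matching both sides as affine quotients of $Z$. Poisson and $\G_1 \times \G_3^-$-equivariance compatibility then follow from the construction of reduced brackets in Lemma \ref{95qjh5ie} and \cite{cro-may:24}.

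The main obstacle will be the scheme-theoretic justification of the iterated-invariants step. Two identifications require care: first, the equality $\C[Y_1]^{\L_1} \otimes_{\C[M_2]} \C[Y_2]^{\L_2} \cong \C[Y_1 \times_{M_2} Y_2]^{\L_1 \times \L_2}$, which is not formal in general but holds here because the two groupoids act on disjoint tensor factors and admits a direct decomposition of invariants; and second, the identification $(Y_1 \times_{M_2} Y_2) \sll{} \G_2 \cong Z$ at the level of affine rings. For the latter, the multiplication map $Y_1 \times_{M_2} Y_2 \longrightarrow Z$ on the middle $\G_2$-factors is a $\G_2$-torsor on an open subset whose complement has codimension at least two, so Proposition \ref{x2vodfb5} delivers the coincidence of affine quotients via Hartogs-type descent. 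Once both sides are expressed as $\C[Z]^{\L_1 \htimes_{\G_2} \L_2}$, the resulting isomorphism is manifestly $\G_1 \times \G_3^-$-equivariant and Poisson, completing (ii).
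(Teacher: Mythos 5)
Your parts (i) and (iii), and the overall skeleton of (ii) --- realizing both composites as affine quotients related by collapsing the doubled middle $\G_2$-factor via multiplication, and invoking invariance of affine Poisson quotients under (Hartogs) Morita morphisms --- follow the same strategy as the paper's proof. (Two small points: a weak equivalence is a span of Morita morphisms rather than a single Morita morphism $\L_1 \to \L_2$, but since it is in particular a Hartogs weak equivalence the last part of Theorem \ref{sqyjpvje} applies as you say; and in (iii) "orbits are the fibres of $\mmm$" is not by itself enough for the affine quotient --- you need that $\mmm$ induces a Morita morphism to the trivial groupoid $\G \tto \G$ and that multiplicativity of $\omega$ matches the $0$-shifted symplectic forms, so that Proposition \ref{gln1n21i} gives the Poisson isomorphism, which is exactly how the paper argues.)

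The genuine gap is the first identification in (ii): $\C[Y_1]^{\L_1} \otimes_{\C[M_2]} \C[Y_2]^{\L_2} \cong \C[Y_1 \times_{M_2} Y_2]^{\L_1 \times \L_2}$, justified only by "the two groupoids act on disjoint tensor factors." That is not an argument. Groupoid invariants are kernels (equalizers of $\sss^*$ and $\ttt^*$), and kernels do not commute with $\otimes_{\C[M_2]}$ without flatness; the disjoint-factor structure gives at best a natural comparison map, whose bijectivity would require, e.g., $\C[\L_1\aff] = \C[Y_1]^{\L_1}$ to be flat over $\C[M_2]$ --- precisely the kind of hypothesis that is unavailable for these affinizations (there is no reductivity and the invariant rings can behave badly over $M_2$). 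The paper never takes this step: it does not split off the $(\L_1 \times \L_2)$-invariants before the $\G_2$-quotient, but compares $\L_2\aff \circ \L_1\aff$ directly with $Z \sll{} (\L_1 \htimes_{\G_2} \L_2)$ by exhibiting a single Morita morphism of action groupoids $(\L_1 \times \G_2 \times \L_2) \ltimes (Y_1 \times_{M_2} Y_2) \too (\L_1 \htimes_{\G_2} \L_2) \ltimes Z$ covering the collapse $(g_1, n_1, g_2, g_2', n_2, g_3) \mtoo (g_1, n_1, (g_2 g_2')^{-1}, n_2, g_3)$, and then applying Proposition \ref{gln1n21i}, with multiplicativity of the symplectic form on $\G_2$ supplying compatibility of the $0$-shifted $2$-forms. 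You should either restructure (ii) along these lines or supply an actual proof of your tensor-splitting claim. Finally, your Hartogs hedge is unneeded: the collapse map is a surjective smooth morphism globally (the two middle factors form a $\G_2$-torsor over $\G_2$ everywhere), so no codimension-two argument enters at this point.
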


\begin{proof}
	Part \ref{1afgzrs6} follows from the last part of Theorem \ref{sqyjpvje}.
	For Part \ref{yrvc6ed6}, let $M_i$ and $N_i$ be the objects of $\G_i$ and $\L_i$, respectively.
	Let $\mu_{ij} : N_i \longrightarrow M_j$ be the base maps, and let $\mu_1 = (\mu_{11}, \mu_{12})$, $\mu_2 = (\mu_{22}, \mu_{23})$.
	We have a diagram of Lie groupoid morphisms
	\[
	\begin{tikzcd}[column sep={4em,between origins}, row sep={4em,between origins}]
		& & \L_2 \circ \L_1 \arrow[equal]{d} & & \\[-16pt]
		& & \L_1 \fp{\mu_{12}\sss}{\sss} \G_2 \fp{\ttt}{\mu_{22}\sss} \L_2 \arrow{dl} \arrow{dr} & & \\
		& \L_1 \arrow{dl} \arrow{dr} & & \L_2 \arrow{dl} \arrow{dr} \\
		\G_1 & & \G_2 & & \G_3
	\end{tikzcd}
	\]
	over the objects
	\[
	\begin{tikzcd}[column sep={4em,between origins}, row sep={4em,between origins}]
		& & N_1 \fp{\mu_{12}}{\sss} \G_2 \fp{\ttt}{\mu_{22}} N_2 \arrow[swap]{dl}{\pr_{N_1}} \arrow{dr}{\pr_{N_2}} & & \\
		& N_1 \arrow[swap]{dl}{\mu_{11}} \arrow{dr}{\mu_{12}} & & N_2 \arrow[swap]{dl}{\mu_{22}} \arrow{dr}{\mu_{23}} \\
		M_1 & & M_2 & & M_3.
	\end{tikzcd}
	\]
	Consider the map
	\[
	\begin{tikzcd}
		\L_2\aff \circ \L_1\aff \arrow[equal]{r} \arrow{d}
		& \Big((\G_1 \fp{\sss}{\mu_{11}} N_1 \fp{\mu_{12}}{\ttt} \G_2) \sll{} \L_1 \fp{\sss \circ \pr_{\G_2}}{\ttt \circ \pr_{\G_2}} (\G_2 \fp{\sss}{\mu_{22}} N_2 \fp{\mu_{23}}{\ttt} \G_3) \sll{} \L_2 \Big) \sll{} \G_2 \\
		(\L_2 \circ \L_1)\aff \arrow[equal]{r} 
		& \Big(\G_1 \fp{\sss}{\mu_{11} \circ \pr_{N_1}} (N_1 \fp{\mu_{12}}{\sss} \G_2 \fp{\ttt}{\mu_{22}} N_2) \fp{\mu_{23} \circ \pr_{N_2}}{\ttt} \G_3\Big) \sll{} (\L_1 \fp{\mu_{12} \circ \sss}{\sss} \G_2 \fp{\ttt}{\mu_{22} \circ \sss} \L_2)
	\end{tikzcd}
	\]
	which descends from the map
	\[
	\begin{tikzcd}
		(\G_1 \fp{\sss}{\mu_{11}} N_1 \fp{\mu_{12}}{\ttt} \G_2) \fp{\sss \circ \pr_{\G_2}}{\ttt \circ \pr_{\G_2}} (\G_2 \fp{\sss}{\mu_{22}} N_2 \fp{\mu_{23}}{\ttt} \G_3) \arrow{d} & (g_1, n_1, g_2, g_2', n_2, g_3) \arrow[mapsto]{d} \\
		\G_1 \fp{\sss}{\mu_{11} \circ \pr_{N_1}} (N_1 \fp{\mu_{12}}{\sss} \G_2 \fp{\ttt}{\mu_{22}} N_2) \fp{\mu_{23} \circ \pr_{N_2}}{\ttt} \G_3 & (g_1, n_1, (g_2g_2')^{-1}, n_2, g_3).
	\end{tikzcd}
	\]
	Note that this map descends from the morphism of action groupoids
	\[
	\begin{tikzcd}
		(\L_1 \times \G_2 \times \L_2) \ltimes \Big( (\G_1 \fp{\sss}{\mu_{11}} N_1 \fp{\mu_{12}}{\ttt} \G_2) \fp{\sss \circ \pr_{\G_2}}{\ttt \circ \pr_{\G_2}} (\G_2 \fp{\sss}{\mu_{22}} N_2 \fp{\mu_{23}}{\ttt} \G_3) \Big) \arrow{d} \\
		(\L_1 \fp{\mu_{12} \circ \sss}{\sss} \G_2 \fp{\ttt}{\mu_{22} \circ \sss} \L_2)
		\ltimes \Big(\G_1 \fp{\sss}{\mu_{11} \circ \pr_{N_1}} (N_1 \fp{\mu_{12}}{\sss} \G_2 \fp{\ttt}{\mu_{22}} N_2) \fp{\mu_{23} \circ \pr_{N_2}}{\ttt} \G_3\Big)
	\end{tikzcd}
	\]
	given by
	\[
	\begin{tikzcd}
		((l_1, a, l_2), (g_1, n_1, g_2, g_2', n_2, g_3)) \arrow[mapsto]{d} & \text{where $\sss(l_1) = n_1$, $\sss(l_2) = n_2$, and $\sss(a) = \sss(g_2) = \ttt(g_2')$} \\
		((l_1, (g_2 g_2')^{-1}, l_2), (g_1, n_1, (g_2 g_2')^{-1}, n_2, g_3)) & \text{where $\sss(l_1) = n_1$ and $\sss(l_2) = n_2$}.
	\end{tikzcd}
	\]
	One sees that this morphism of action groupoids is a Morita morphism, i.e.\ a pullback by a surjective smooth morphism. Moreover, the multiplicativity of the symplectic form on $\G_2$ implies that the base map preserves the 0-shifted symplectic 2-forms on both sides.
	Proposition \ref{gln1n21i} then implies that this morphism descends to an isomorphism of Poisson schemes $\L_2\aff \circ \L_1\aff \to (\L_2 \circ \L_1)\aff$.
	It is also clearly $\G_1 \times \G_3$-equivariant, and hence an isomorphism of Hamiltonian $\G_1 \times \G_3^-$-schemes.
	
	We now show Part \ref{venft4n2}. Note that the affinization of the identity of $\G$ in $\WS$ is the affine quotient $(\G \fp{\sss}{\ttt} \G) \sll{} \G$, where $\G$ acts by $g \cdot (a, b) = (ag^{-1}, gb)$; its Poisson structure is induced by the 0-shifted symplectic form on $\G \ltimes (\G \fp{\sss}{\ttt} \G) \tto \G \fp{\sss}{\ttt} \G$, i.e.\ the 2-form $\pr_1^*\omega + \pr_2^*\omega$ on $\G \fp{\sss}{\ttt} \G$. 
	At the same time, we have a Morita morphism from $\G \ltimes (\G \fp{\sss}{\ttt} \G) \tto \G \fp{\sss}{\ttt} \G$ to the trivial groupoid $\G \tto \G$; it is given by $(g, (a, b)) \mto ab$ on arrows and $\mmm : \G \fp{\sss}{\ttt} \G \to \G$ on objects.
	By the multiplicativity of $\omega$, this is a Morita morphism of 0-shifted symplectic affine algebraic Lie groupoids.
	Proposition \ref{gln1n21i} then shows that $(\G \fp{\sss}{\ttt} \G) \sll{} \G$ is isomorphic to $\G$ as a Hamiltonian $\G \times \G^-$-scheme.
\end{proof}

\subsection{Hartogs abelianizations of affine symplectic groupoids}\label{Subsection: Affinization}
The following definition is useful.

\begin{definition}
A \defn{Hartogs abelianization} of an affine symplectic groupoid $\G \tto X$ is an abelian affine symplectic groupoid $(\A, \omega_\A) \tto Y$, together with a Hartogs symplectic Morita equivalence from $\A$ to $\G$. We call $\G$ \textbf{Hartogs abelianizable} if it admits a Hartogs abelianization.
\end{definition}

The following is a more explicit statement of the definition. Let $(\G, \omega_\G) \tto X$ be an affine symplectic groupoid. A Hartogs abelianization of $\G$ consists of an abelian affine symplectic groupoid $(\A, \omega_\A) \tto Y$, an affine Lie groupoid $\H$ endowed with an algebraic closed 2-form $\gamma$ on $\H\obj$, and Hartogs Morita morphisms
\[
\begin{tikzcd}[row sep={2em,between origins}, column sep={3em,between origins}]
	& \H \arrow[swap]{dl}{\varphi} \arrow{dr}{\psi} & \\
	\A & & \G
\end{tikzcd}
\]
satisfying $\varphi^*\omega_\A - \psi^*\omega_\G = \ttt^*\gamma - \sss^*\gamma$.


As in Subsection \ref{Subsection: Abelianizable}, $\A_\eta$, $\A_\mu$, $\A_\delta$, and $\A_\epsilon$ transfer to affine 1-shifted Lagrangian correspondences
\begin{equation}\label{ozt3ju0f}
	\begin{tikzcd}[row sep={2em,between origins},column sep={3em,between origins}]
		& \G_\eta \arrow{dl} \arrow{dr} & \\
		\star & & \G
	\end{tikzcd}
	\qquad
	\begin{tikzcd}[row sep={2em,between origins},column sep={3em,between origins}]
		& \G_\mu \arrow{dl} \arrow{dr} & \\
		\G^2 & & \G
	\end{tikzcd}
	\qquad
	\begin{tikzcd}[row sep={2em,between origins},column sep={3em,between origins}]
		& \G_\delta \arrow{dl} \arrow{dr} & \\
		\G & & \G^2
	\end{tikzcd}
	\qquad
	\begin{tikzcd}[row sep={2em,between origins},column sep={3em,between origins}]
		& \G_\epsilon \arrow{dl} \arrow{dr} & \\
		\G & & \star
	\end{tikzcd}
\end{equation}
by taking the homotopy fibre products of $\A_\eta$, $\A_\mu$, $\A_\delta$, and $\A_\epsilon$ with $\H^-$, $\H \times \H \times \H^-$, $\H \times \H^- \times \H^-$, and $\H$, respectively.
Consider the corresponding morphisms $\G_\eta\aff$, $\G_\mu\aff$, $\G_\delta\aff$, and $\G_\epsilon\aff$ in $\AMT$ obtained via affinization.

\begin{theorem}\label{2vumxgkk}
If an affine symplectic groupoid $\G$ is Hartogs abelianizable, then $(\G,\G_\eta\aff,\G_\mu\aff,\G_\delta\aff,\G_\epsilon\aff)$ is a commutative Frobenius object in $\AMT$. In this way, $\G$ determines a TQFT $\Cob_2\longrightarrow\AMT$.
\end{theorem}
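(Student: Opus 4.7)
The strategy is to reduce Theorem \ref{2vumxgkk} to Theorem \ref{jxre2dem} applied over the abelianizable locus, combined with the ``functoriality'' of the affinization process $\L \mto \L\aff$ established in Proposition \ref{6x7y2609}. Fix a Hartogs abelianization $\A \overset{\varphi}{\longleftarrow} \H \overset{\psi}{\longrightarrow} \G$, and let $U \coloneqq \im(\psi\obj) \subseteq X$, which by hypothesis has complement of codimension at least two. Restricting to $U$ turns the diagram into an ordinary (non-Hartogs) abelianization $\A \leftarrow \H \rightarrow \G|_U$. The algebraic version of Theorem \ref{jxre2dem} then furnishes a commutative Frobenius object $(\G|_U, (\G|_U)_\eta, (\G|_U)_\mu, (\G|_U)_\delta, (\G|_U)_\epsilon)$ in $\AWSQ$, so each Frobenius axiom is witnessed by a weak equivalence between composable $1$-shifted Lagrangian correspondences.

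I would next show that the affinizations satisfy $\G_*\aff \cong (\G|_U)_*\aff$ as Hamiltonian $\G \times \G^-$-schemes. By construction, each $\G_*$ in \eqref{ozt3ju0f} is a homotopy fibre product of some $\A_*$ with copies of $\H^-$, whose restriction to $U$ is precisely $(\G|_U)_*$, and the inclusion of the restricted correspondence is a Hartogs Morita morphism on the relevant action groupoids because the base of $\G_*$ inherits a codimension-two complement from $X \setminus U$. Proposition \ref{x2vodfb5} then identifies the rings of invariants defining the affine quotient in Theorem \ref{sqyjpvje}, and Proposition \ref{gln1n21i} guarantees identification of the Poisson brackets and extension of the $\G|_U \times \G|_U^-$-action to a Hamiltonian $\G \times \G^-$-action on the common affine scheme.

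Combining these with Proposition \ref{6x7y2609}, the Frobenius relations transfer from $\AWSQ$ to $\AMT$. Part \ref{1afgzrs6} converts each weak equivalence in $\AWSQ$ into an isomorphism of Hamiltonian schemes; part \ref{yrvc6ed6} identifies compositions in $\AMT$ with affinizations of compositions in $\AWSQ$; and part \ref{venft4n2} identifies affinized identities with the identities in $\AMT$. Tensor products cause no additional trouble, since in both $\WS$ and $\AMT$ the monoidal structure on morphisms is given by Cartesian/product constructions, and affinization commutes with finite products of affine schemes. Together these ingredients yield the Frobenius structure on $(\G, \G_\eta\aff, \G_\mu\aff, \G_\delta\aff, \G_\epsilon\aff)$ in $\AMT$, and the desired TQFT $\eta_\G : \Cob_2 \longrightarrow \AMT$ then follows from the standard equivalence between commutative Frobenius objects and $2$-dimensional TQFTs.

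The main obstacle is the middle step: checking that at each stage of the constructions defining $\G_\eta, \G_\mu, \G_\delta, \G_\epsilon$---and especially for the binary compositions in $\AWSQ$ entering the Frobenius axioms---the inclusion from the $U$-part to the $X$-part is a Hartogs Morita morphism of affine algebraic Lie groupoids with the requisite codimension-two property, so that Propositions \ref{x2vodfb5} and \ref{gln1n21i} actually apply. This amounts to a codimension count showing that smooth pullbacks and fibre products over powers of $X$ preserve the codimension-two property of $X \setminus U$; once this is granted, the rest of the argument is a formal consequence of Theorem \ref{jxre2dem} and Proposition \ref{6x7y2609}.
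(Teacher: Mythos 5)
Your overall architecture is the paper's: restrict to $U$, invoke the Main Theorem \ref{Theorem: Main 1} machinery (Proposition \ref{eiptq550} with Lemmas \ref{9cx290iq}, \ref{4900zjwk}, \ref{jycyn8qs}) to obtain the Frobenius relations at the level of 1-shifted Lagrangian correspondences, and then push them into $\AMT$ via Theorem \ref{sqyjpvje} and Proposition \ref{6x7y2609}. The gap is precisely the step you yourself flag as the ``main obstacle'': the claimed isomorphisms $\G_*\aff \cong (\G|_U)_*\aff$ are never established, and as formulated they are more delicate than a codimension count. First, $U$ is only quasi-affine and $\G|_U$ is not an affine symplectic groupoid, so ``$(\G|_U)_*\aff$'' is not defined by Theorem \ref{sqyjpvje}; you would have to set up and control a separate affinization over $\G|_U$ before comparing. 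Second, $U$ need not be $\G$-invariant, so fibre products such as $\G \fp{\sss}{\mu_1} N \fp{\mu_2}{\ttt} \G$ and their $\G|_U$-counterparts can genuinely differ; what is needed is not merely that $X \setminus U$ keeps codimension two under smooth pullback, but an actual Hartogs Morita morphism between the relevant action groupoids, compatible with the $0$-shifted symplectic forms and the residual $\G \times \G^-$-actions, and this would have to be verified for every generator and for every composition entering the axioms. As written, the proposal defers exactly the point where the Hartogs hypothesis has to do its work.

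The paper's arrangement avoids this comparison altogether. The transferred correspondences \eqref{ozt3ju0f} are already defined over powers of the full affine $\G$, and since their base maps land in $U$, the homotopy fibre products and weak equivalences computed over $\G|_U$ coincide with those computed over $\G$; hence the Frobenius relations hold verbatim as relations among affine correspondences over $\G$, with $\G_\iota \coloneqq \H$ and $\G_\tau \coloneqq \H \times \H$ standing in for the cylinder and the twist. Proposition \ref{6x7y2609} then applies directly, and the only Hartogs input required is the single weak equivalence between $\G \longleftarrow \H \longrightarrow \G$ and $\G \longleftarrow \G \longrightarrow \G$, which is immediate from the definition of a Hartogs abelianization; the last clause of Theorem \ref{sqyjpvje} together with Proposition \ref{6x7y2609}\ref{venft4n2} identifies $\G_\iota\aff$ with $\Id_\G$, and similarly $\G_\tau\aff$ with the braiding. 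To repair your version you would need to prove the deferred comparison in the generality you use it; rerouting the argument as above closes the gap using only results you already cite.
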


\begin{proof}
	Let $U \s X$ be the image of the base map of the Hartogs Morita morphism $\psi : \H \longrightarrow \G$. By assumption, $U$ is an open subset with a complement of codimension at least two in $X$. We also know that the correspondences \eqref{ozt3ju0f} restrict to 1-shifted Lagrangian correspondences on powers of $\G\big\vert_U\coloneqq\sss^{-1}(U)\cap\ttt^{-1}(U)$.
	Proposition \ref{eiptq550} and Lemma \ref{9cx290iq}, \ref{4900zjwk}, and \ref{jycyn8qs} now tell us that $\G\big\vert_U$ is a commutative Frobenius object in $\WS$ with respect to these restricted morphisms.
	The identity $\Id_{\G|_U}$ in $\WS$ can be identified with $\H$, and the braiding $B_{\G|_U}$ with $\H \times \H$.
	Let $\G_\iota \coloneqq \H$, viewed as a 1-shifted Lagrangian correspondence from $\G$ to $\G$.
	We similarly let $\G_\tau \coloneqq \H \times \H$, viewed as a 1-shifted Lagrangian correspondence from $\G \times \G$ to $\G \times \G$ via $(\psi(a), \psi(b)) \mapsfrom (a, b) \mapsto (\psi(b), \psi(a))$.
	It follows that $\G_\eta, \G_\mu, \G_\iota, \G_\delta, \G_\epsilon, \G_\tau$ satisfy the identities indicated by \eqref{wzw7o5ta}--\eqref{vjvmwnz4} viewed as 1-shifted Lagrangian correspondences on powers of $\G$.
	Proposition \ref{6x7y2609} then implies that their affinizations satisfy the same relations in $\AMT$.
	It therefore suffices to show that $\G_\iota\aff$ is the identity morphism from $\G$ to $\G$ in $\MT$ and, similarly, that $\G_\tau\aff$ is the brading on $\G$ in $\AMT$.
	This follows from Proposition \ref{6x7y2609}\ref{venft4n2} and the last part of Theorem \ref{sqyjpvje}, as we have a Hartogs weak equivalence
	\[
\begin{tikzcd}[row sep={2em,between origins},column sep={3em,between origins}]
& \H \arrow{dl} \arrow{dr} \arrow{dd} & \\
\G & & \G \\
& \G \arrow{ul} \arrow{ur} &
\end{tikzcd}
	\]
	of 1-shifted Lagrangian correspondences from $\G$ to $\G$.
	%
\end{proof}

\subsection{Hartogs slices}

In analogy with Subsection \ref{Subsection: Global slice}, one might expect certain slices to induce Hartogs abelianizations of affine symplectic groupoids. This turns out to be true via the following algebro-geometric counterpart to Definition \ref{Definition: Global slice}.

\begin{definition}
A \defn{Hartogs slice} to an affine algebraic Lie groupoid $\G\tto X$ is a smooth closed affine subvariety $S \s X$ that is a global slice to $\G\big\vert_U$ for some open subset $U \s X$ whose complement has codimension at least 2.
The Hartogs slice is \defn{admissible} if the isotropy group $\G_x$ is abelian for all $x \in S$.
\end{definition}

\begin{proposition}\label{Proposition: Hartogs}
	Let $\G \tto X$ be an affine symplectic groupoid together with an admissible Hartogs slice $S \s X$.
	Then the restriction $\G\big\vert_S$ is a Hartogs abelianization of $\G$.
\end{proposition}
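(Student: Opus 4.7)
The plan is to first apply Proposition \ref{n7nca2ep} on the open subset $U\s X$ over which $S$ is a global slice, thereby identifying $\A\coloneqq\G|_S$ as an abelian symplectic groupoid over $S$, and then to extend the resulting Morita equivalence between $\A$ and $\G|_U$ to a Hartogs symplectic Morita equivalence between $\A$ and $\G$ itself. Since $\G$ is a symplectic groupoid, its restriction $\G|_U$ is quasi-symplectic with trivial background $3$-form, so Proposition \ref{n7nca2ep} applies with $\gamma=0$: combined with the admissibility of $S$, it yields an abelian symplectic groupoid structure on $\A$ with symplectic form $j^*\omega$, where $j:\A\hookrightarrow\G$ denotes the inclusion. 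It remains to construct a bridge exhibiting $\A$ as a Hartogs abelianization of $\G$.

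For the bridge, set $N\coloneqq\sss^{-1}(S)\s\G\arr$; this is a closed smooth affine subvariety of $\G\arr$, since $\sss$ is a smooth morphism and $S$ is smooth and closed in $X$. Define $\H\coloneqq(\ttt|_N)^*\G$, an affine algebraic Lie groupoid over $N$ with arrow set
\[
\H\arr=\{(p_1,g,p_2)\in N\times\G\arr\times N:\ttt(p_1)=\sss(g),\ \ttt(g)=\ttt(p_2)\}.
\]
The transversality $T_xS+\im\rho_x=T_xX$ at $x\in S$ implies that $\sss|_N:N\longrightarrow S$ is a surjective smooth submersion and that $\ttt|_N:N\longrightarrow X$ is a smooth morphism; the image of the latter is the $\G$-saturation of $S$, which is open in $X$ (smooth morphisms are open), contains $U$, and therefore has complement of codimension at least two. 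Next, the formula $(p_1,h,p_2)\longmapsto(p_1,p_2hp_1^{-1},p_2)$ furnishes a canonical groupoid isomorphism $(\sss|_N)^*\A\overset{\sim}{\longrightarrow}\H$, so that the assignments $\psi:(p_1,g,p_2)\mapsto g$ and $\varphi:(p_1,g,p_2)\mapsto p_2^{-1}gp_1$ define Hartogs Morita morphisms $\psi:\H\longrightarrow\G$ and $\varphi:\H\longrightarrow\A$.

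To complete the Hartogs symplectic Morita equivalence, take $\gamma\coloneqq-\omega|_N$, which is an algebraic closed $2$-form on $N$. Writing $\pi_1,\pi_2,\pi_3:\H\arr\longrightarrow\G\arr$ for the three component projections, and using the multiplicativity of $\omega$ iteratively together with $\iii^*\omega=-\omega$, one computes
\[
\varphi^*\omega_\A=(p_2^{-1}gp_1)^*\omega=\pi_1^*\omega+\pi_2^*\omega-\pi_3^*\omega,\qquad \psi^*\omega_\G=\pi_2^*\omega.
\]
Since $\sss_\H$ and $\ttt_\H$ factor through $N$ as $\pi_1$ and $\pi_3$ respectively, subtracting yields $\varphi^*\omega_\A-\psi^*\omega_\G=\pi_1^*\omega-\pi_3^*\omega=\ttt_\H^*\gamma-\sss_\H^*\gamma$, as required. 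The principal obstacle is the geometric verification that $\ttt|_N:N\longrightarrow X$ is smooth with open image having codimension-two complement in $X$; this is exactly where the Hartogs-slice hypothesis is invoked, as it guarantees the transversality of $S$ with $\G$-orbits at every point of $S$ and ensures that the $\G$-saturation of $S$ contains $U$. Once this geometric input is secured, the remaining verifications are either formal groupoid-theoretic identifications or direct consequences of the multiplicativity of $\omega$.
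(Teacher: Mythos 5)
Your proposal is correct and follows the approach the paper intends: its one-line proof defers to Proposition~\ref{n7nca2ep}, and you carry out exactly that argument in the Hartogs setting, identifying $\A=\G\big\vert_S$ as an abelian affine symplectic groupoid and then exhibiting the symplectic Morita data, with the bridge $\H=(\ttt|_N)^*\G\cong(\sss|_N)^*\A$ over $N=\sss^{-1}(S)$ and the $2$-form $-\omega|_N$ making explicit what the paper leaves implicit (an essential equivalence $\A\hookrightarrow\G$ alone cannot serve as a Hartogs Morita morphism, since its base image is not open). The only point you assert rather than prove, the smoothness of $\ttt|_N$ and the openness of its image, is precisely the tangent-space computation already carried out in the proof of Proposition~\ref{n7nca2ep} (transversality at $\sss(g)\in S$ plus $\sss_*(\ker\ttt_{*,g})=\im\rho_{\sss(g)}$ gives $T_g\G\arr=T_gN+\ker\ttt_{*,g}$), so your citation covers it.
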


\begin{proof}
The proof is essentially the same as that of Proposition \ref{n7nca2ep}.
\end{proof}

Consider a Hartogs slice $S\s X$ to an affine symplectic groupoid $\G\tto X$. Note that $S$ is a Poisson transversal in $X$. Since the source map $\sss : \G \longrightarrow X$ is anti-Poisson, $\sss^{-1}(S) \s \G$ is a symplectic subvariety of $\G$. The action of $\G$ on $\sss^{-1}(S)$ by left multiplication is Hamiltonian. We may thereby view $\sss^{-1}(S)$ as a morphism $\G \to \star$ in $\AMT$.

\begin{theorem}\label{60ijaoc7}
	Let $\G \tto X$ be an affine symplectic groupoid with an admissible Hartogs slice $S \s X$.
	Write $F_\G : \Cob_2 \longrightarrow \AMT$ for the TQFT induced from Theorem \ref{2vumxgkk} by the Hartogs abelianization $\G\big\vert_S$.
	Then $F_\G$ maps $\begin{tikzpicture}[
		baseline=-2.5pt,
		every tqft/.append style={
			transform shape, rotate=90, tqft/circle x radius=4pt,
			tqft/circle y radius= 2pt,
			tqft/boundary separation=0.6cm, 
			tqft/view from=incoming,
		}
		]
		\pic[
		tqft/cup,
		name=d,
		every incoming lower boundary component/.style={draw},
		every outgoing lower boundary component/.style={draw},
		every incoming boundary component/.style={draw},
		every outgoing boundary component/.style={draw},
		cobordism edge/.style={draw},
		cobordism height= 1cm,
		];
	\end{tikzpicture}$ to the Hamiltonian $\G$-scheme $\sss^{-1}(S)$.
\end{theorem}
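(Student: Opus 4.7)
The plan is to trace through the construction underlying Theorem \ref{2vumxgkk} and then apply the affinization formula of Theorem \ref{sqyjpvje}. By the proof of Theorem \ref{2vumxgkk}, $F_\G$ sends the cup cobordism to $\G_\epsilon\aff$, where $\G_\epsilon : \G \too \star$ is the Hartogs Morita transfer of the counit $\A_\epsilon$ on the Hartogs abelianization $\A \coloneqq \G\big\vert_S$. Recall that $\A_\epsilon$ is the trivial groupoid $S \tto S$ together with the identity bisection $\uuu : S \hookrightarrow \A$. The first step is to identify a convenient representative of $\G_\epsilon$ up to Hartogs weak equivalence. Adapting the argument of Proposition \ref{wlgt0nx7} and using Example \ref{gesm9jcm} together with Lemma \ref{rw45fz6q}, $\G_\epsilon$ is Hartogs weakly equivalent to the correspondence
\[
\begin{tikzcd}[row sep={2em,between origins},column sep={3em,between origins}]
& S \arrow{dl} \arrow{dr} & \\
\G & & \star,
\end{tikzcd}
\]
where $S$ is viewed as the trivial groupoid, the left map sends $s \mapsto \uuu_s \in \G$, and the $1$-shifted Lagrangian $2$-form is taken to be zero. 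The latter is consistent since $\uuu^*\omega_\G = 0$ by multiplicativity.

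The second step is to apply Theorem \ref{sqyjpvje} to this representative, with $\G_1 = \G$, $\G_2 = \star$, and $N = \L = S$. By Proposition \ref{6x7y2609}\ref{1afgzrs6}, the affinization does not depend on the chosen representative, so the formula yields
\[
\G_\epsilon\aff = \big(\G \fp{\sss}{\iota} S\big) \sll{} S \;\cong\; \sss^{-1}(S),
\]
where $\iota : S \hookrightarrow X$ denotes the inclusion; this uses that the trivial groupoid $S$ acts trivially on $\G \fp{\sss}{\iota} S \cong \sss^{-1}(S)$. The residual $\G$-action arises from left multiplication on the first factor and restricts to the standard left-multiplication action on $\sss^{-1}(S)$.

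The final step is to verify that the Poisson structure assigned to $\sss^{-1}(S)$ by Lemma \ref{95qjh5ie} agrees with the symplectic structure inherited from the embedding $\sss^{-1}(S) \hookrightarrow \G$. Unwinding the reduction, the relevant $0$-shifted symplectic $2$-form on $\G \fp{\sss}{\iota} S$ is simply the pullback of $\omega_\G$ under the first projection, which restricts to $\omega_\G\big\vert_{\sss^{-1}(S)}$; non-degeneracy follows from $S$ being a Poisson transversal on $U$, making $\sss^{-1}(S) \cap \G\big\vert_U$ a symplectic subvariety of $\G\big\vert_U$. The main obstacle lies in carefully tracking $2$-forms through the Hartogs Morita transfer, since the groupoid $\H$ implementing the abelianization lives only over the open subset $U \s X$; here the codimension-at-least-$2$ hypothesis, together with Proposition \ref{x2vodfb5}, permits extending the identification of Hamiltonian $\G$-schemes from $U$ to all of $X$.
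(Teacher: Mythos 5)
Your proposal is correct and follows essentially the same route as the paper: the paper likewise identifies $F_\G$ of the cup with the affinization of the $1$-shifted Lagrangian $(S \tto S) \too (\G \tto X)$ (via the argument of Proposition \ref{wlgt0nx7}) and then reads off $\sss^{-1}(S)$ from the affinization formula \eqref{3eh6qc5x}. Your extra verifications (triviality of the $S$-action, the induced $2$-form being $\omega_\G\big\vert_{\sss^{-1}(S)}$, and the codimension-two extension) are consistent with, and merely spell out, what the paper leaves implicit.
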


\begin{proof}
	By the proof of Proposition \ref{wlgt0nx7}, $F_\G(\begin{tikzpicture}[
		baseline=-2.5pt,
		every tqft/.append style={
			transform shape, rotate=90, tqft/circle x radius=4pt,
			tqft/circle y radius= 2pt,
			tqft/boundary separation=0.6cm, 
			tqft/view from=incoming,
		}
		]
		\pic[
		tqft/cup,
		name=d,
		every incoming lower boundary component/.style={draw},
		every outgoing lower boundary component/.style={draw},
		every incoming boundary component/.style={draw},
		every outgoing boundary component/.style={draw},
		cobordism edge/.style={draw},
		cobordism height= 1cm,
		];
	\end{tikzpicture})$ is the affinization of the 1-shifted Lagrangian $(S \tto S) \too (\G \tto X)$.
	It then follows directly from \eqref{3eh6qc5x} that this affinization is the Hamiltonian $\G$-scheme $\sss^{-1}(S)$.
\end{proof}

%
\begin{remark}
Consider a pair of integers $(m, n) \ne (0, 0)$.
One can describe the images of the genus-0 cobordism from $m$ circles to $n$ circles under the TQFT $F_\G$ of Theorem \ref{60ijaoc7}. To this end, let $\A \coloneqq \G\big\vert_S$.
As in Remark \ref{vj7om4mk}, the corresponding morphism from $\A^m$ to $\A^n$ in $\WS$ is $\A^{m, n} \coloneqq \{(a, b) \in \A^{*m} * \A^{*n} : a_1 \cdots a_m = b_1 \cdots b_n\}$, together with the projections to $\A^m$ and $\A^n$.
Since the inclusion $\A \longrightarrow \G\big\vert_U$ is an essential equivalence, the corresponding 1-shifted Lagrangian correspondences on powers of $\G$ are also given by $\A^{m, n}$ and the natural maps
\[
\begin{tikzcd}[row sep={2em,between origins},column sep={4em,between origins}]
	& \A^{m, n} \arrow{dl} \arrow{dr} & \\
	\G^m & & \G^n.
\end{tikzcd}
\]
Taking the affinizations of these morphisms, we get that $F_\G$ maps the genus-0 cobordism from $m$ circles to $n$ circles to the affine quotient 
\[
\{(g, h) \in \G^m \times \G^n : \sss(g_1) = \cdots = \sss(g_m) = \ttt(h_1) = \cdots = \ttt(h_n) \in S\} \sll{} \A^{m, n},
\]
where $\A^{m, n}$ acts by $(a, b) \cdot (g, h) = (ga^{-1}, bh)$.
The action of $\G^m \times (\G^-)^n$ giving it the structure of a Hamiltonian scheme descends from the action $(a, b) \cdot (g, h) = (ag, hb^{-1})$.
\end{remark}

\section{The special case of the Moore--Tachikawa conjecture}\label{Section: Special case}
This section is devoted to the implications of Main Theorem \ref{Theorem: Main 2} for constructing the Moore--Tachikawa TQFT. As with the previous section, we work exclusively over $\C$. We begin by recalling Moore and Tachikawa's category $\MT$ of holomorphic symplectic varieties with Hamiltonian actions. We then state the Moore--Tachikawa conjecture regarding the existence of a TQFT in $\MT$. This conjecture turns out to have a natural cousin in $\AMT$, i.e. a conjecture about the existence of a TQFT in $\AMT$. We deduce the latter conjecture as an immediate corollary of Main Theorem \ref{Theorem: Main 2}. 


\subsection{The Moore--Tachikawa category}\label{Subsection: MT category}
Moore and Tachikawa's conjectural TQFT would take values in the so-called \textit{category of holomorphic symplectic varieties with Hamiltonian actions} \cite{moo-tac:11}. We denote this category by $\MT$, and briefly recall its construction. 

The objects of $\MT$ are complex semisimple affine algebraic groups. A precise description of morphisms in $\MT$ hinges on the exact meaning of \textit{holomorphic symplectic variety}. In this context, one means an affine Poisson variety for which the Poisson structure is non-degenerate on an open dense subset of the smooth locus. A morphism in $\MT$ from $G$ to $I$ is an isomorphism class of holomorphic symplectic varieties carrying algebraic $G\times I$-actions, in such a way that the actions of $G=G\times\{e\}\s G\times I$ (resp. $I=\{e\}\times I\s G\times I$) are Poisson (resp. anti-Poisson). To define morphism composition, suppose that $[M]\in\Hom(G,I)$ and $[N]\in\Hom(I,K)$ for complex semisimple affine algebraic groups $G$, $I$, and $K$. One defines $$[N]\circ[M]\coloneqq[(M\times N^{-})\sll{0}I]\in\Hom(G,K).$$ By setting $G\otimes I\coloneqq G\times I$ and $[M]\otimes[N]=[M\times N]$, one can realize $\MT$ as a symmetric monoidal category.

We define a functor $\mathcal{F}:\MT\longrightarrow\AMT$ as follows. Given a complex semisimple affine algebraic group $G$, let $\F(G)$ be the cotangent groupoid $T^*G\tto\g^*$. Now suppose that $[M]\in\Hom(G,I)$ for complex semisimple affine algebraic groups $G$ and $I$. It follows that $M$ is an affine Hamiltonian $T^*G\times (T^*I)^{-}$-scheme. This fact allows us to let $\mathcal{F}([M])=[M]$, where the right-hand side is the isomorphism class of $M$ as an affine Hamiltonian $T^*G\times (T^*I)^{-}$-scheme. It follows that $\mathcal{F}:\MT\longrightarrow\AMT$ includes $\MT$ as a symmetric monoidal subcategory of $\AMT$.

\subsection{The Moore--Tachikawa conjecture}\label{Subsection: MT}
Let $G$ be a connected complex semisimple linear algebraic group with Lie algebra $\g$ and rank $\ell$. The Killing form determines an isomorphism between the adjoint and coadjoint representations of $G$; we denoted it by $(\cdot)^{\vee}:\g\longrightarrow\g^*$, $x\mapsto x^{\vee}$. Write $G_x\s G$ and $G_{\xi}\s G$ for the $G$-centralizers of $x\in\g$ and $\xi\in\g^*$, respectively. Their respective Lie algebras are denoted $\g_x\s\g$ and $\g_{\xi}\s\g$. It is known that $\dim\g_x\geq\ell$ (resp. $\dim\g_{\xi}\geq\ell$) for all $x\in\g$  (resp. $\xi\in\g^*$). The regular loci in $\g$ and $\g^*$ are the $G$-invariant open subvarieties given by
$$\g_{\text{reg}}\coloneqq\{x\in\g:\dim\g_x=\ell\}\quad\text{and}\quad\g^*_{\text{reg}}\coloneqq\{\xi\in\g^*:\dim\g_{\xi}=\ell\},$$
respectively. Using the $G$-equivariance of $(\cdot)^{\vee}:\g\longrightarrow\g^*$, one deduces that $\g^*_{\text{reg}}=(\g_{\text{reg}})^{\vee}$.

Let $G\times G$ act on $G$ via $(g_1,g_2)\cdot h=g_1hg_{2}^{-1}$. The cotangent lift of this action is a Hamiltonian $G\times G$-variety structure on $T^*G$. If we use the left trivialization to identify $T^*G$ with $G\times\g^*$, then this lifted action admits
$$(\mu_1,\mu_2):T^*G\longrightarrow\g^*\oplus\g^*,\quad (g,\xi)\mapsto(-\mathrm{Ad}_g^*(\xi),\xi)$$ as a moment map.

Let $(e,h,f)\in\g^{\times 3}$ be an $\mathfrak{sl}_2$-triple with $e,h,f\in\g_{\text{reg}}$, i.e.\ a principal $\mathfrak{sl}_2$-triple. This triple determines a Slodowy slice $\mathcal{S}\coloneqq e+\g_f$. One knows that $\mathcal{S}^{\vee}= (e+\g_{f})^{\vee}\s\g^*$ is a Poisson transversal in $\g^*$ \cite{gan-gin:02}. It follows that $G\times\mathcal{S}^{\vee}=\mu_2^{-1}(S^{\vee})\s T^*G$ is a symplectic subvariety. We also observe that the Hamiltonian action of $G=G\times\{e\}\s G\times G$ on $T^*G$ preserves $G\times\mathcal{S}^{\vee}$. In this way, $G\times\mathcal{S}^{\vee}$ is a holomorphic symplectic variety with a Hamiltonian action of $G$. The Moore--Tachikawa conjecture is then stated as follows.
\begin{conjecture}[Moore--Tachikawa]\label{Conjecture: MT}
Let $G$ be a connected semisimple affine algebraic group with Lie algebra $\g$. Consider a principal $\mathfrak{sl}_2$-triple $(e,h,f)\in\g^{\times 3}$, and set $\mathcal{S}\coloneqq e+\g_f$. There exists a TQFT $\eta_G:\Cob_2\longrightarrow\MT$ satisfying $\eta_G(S^1)=G$ and $\eta_G(\begin{tikzpicture}[
	baseline=-2.5pt,
	every tqft/.append style={
		transform shape, rotate=90, tqft/circle x radius=4pt,
		tqft/circle y radius= 2pt,
		tqft/boundary separation=0.6cm, 
		tqft/view from=incoming,
	}
	]
	\pic[
	tqft/cup,
	name=d,
	every incoming lower boundary component/.style={draw},
	every outgoing lower boundary component/.style={draw},
	every incoming boundary component/.style={draw},
	every outgoing boundary component/.style={draw},
	cobordism edge/.style={draw},
	cobordism height= 1cm,
	];
\end{tikzpicture})=G\times\mathcal{S}^{\vee}$
\end{conjecture}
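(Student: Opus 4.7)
The plan is to derive Conjecture \ref{Conjecture: MT} as a near-immediate corollary of Main Theorem \ref{Theorem: Main 2} and Theorem \ref{60ijaoc7}, with the cotangent groupoid $T^*G \tto \g^*$ playing the role of the affine symplectic groupoid and the Kostant slice $\mathcal{S}^\vee \subseteq \g^*$ playing the role of the admissible Hartogs slice. Our construction will automatically produce a TQFT valued in $\AMT$, and the Moore--Tachikawa conjecture in $\MT$ will then reduce to the finite-generation question for the resulting affine Poisson schemes.

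The first step is to verify that $\mathcal{S}^\vee \subseteq \g^*$ is an admissible Hartogs slice for $T^*G \tto \g^*$. The required inputs are entirely classical, coming from Kostant's theory of the principal Slodowy slice \cite{kos:59,kos:63}: since $G$ is connected and semisimple, the complement of $\g^*_{\mathrm{reg}}$ in $\g^*$ has codimension at least two; the slice $\mathcal{S}^\vee$ intersects each coadjoint orbit in $\g^*_{\mathrm{reg}}$ transversely in a single point; and the stabilizer $G_\xi$ is abelian for every $\xi \in \mathcal{S}^\vee \subseteq \g^*_{\mathrm{reg}}$. Combining these three facts yields precisely the three bullet points in the definition of an admissible Hartogs slice. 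By Proposition \ref{Proposition: Hartogs}, the restriction $(T^*G)\big|_{\mathcal{S}^\vee}$ — i.e.\ the universal centralizer $\mathcal{Z}_G \to \mathcal{S}^\vee$ — is a Hartogs abelianization of $T^*G$.

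Main Theorem \ref{Theorem: Main 2} then produces a TQFT $\eta_G : \Cob_2 \to \AMT$ with $\eta_G(S^1) = T^*G$. Under the fully faithful embedding $\F : \MT \hookrightarrow \AMT$ of Subsection \ref{Subsection: MT category}, the object $T^*G \in \AMT$ is the image of $G \in \MT$, so this matches the condition $\eta_G(S^1) = G$. Theorem \ref{60ijaoc7} further identifies $\eta_G$ evaluated on the cup cobordism with the Hamiltonian $T^*G$-scheme $\sss^{-1}(\mathcal{S}^\vee)$. Under the left trivialization $T^*G \cong G \times \g^*$ whose source is $\sss(g, \xi) = \xi$, one reads off $\sss^{-1}(\mathcal{S}^\vee) = G \times \mathcal{S}^\vee$ as a Hamiltonian $G$-variety, matching the second condition in the conjecture.

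The hard part is the promotion of this $\AMT$-valued TQFT to an $\MT$-valued one; it is precisely here that the Moore--Tachikawa conjecture in its original form goes beyond the scope of our theorem. Concretely, one must show that for every connected compact oriented surface $\Sigma$, the affine Poisson scheme $\eta_G(\Sigma) \in \AMT$ is of finite type over $\C$ — equivalently, that the Ginzburg--Kazhdan affinizations that our construction naturally outputs are finitely generated as $\C$-algebras. By \cite{bra-fin-nak:19}, this finite-generation statement is known in Lie type A, and so Conjecture \ref{Conjecture: MT} follows unconditionally in that case. In general, finite generation remains a deep open problem; the unconditional output of our method is therefore best described as the $\AMT$-analogue of the Moore--Tachikawa conjecture for every connected semisimple $G$, together with the classical statement in type A.
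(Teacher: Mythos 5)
Your proposal matches the paper's own treatment: the paper does not prove Conjecture \ref{Conjecture: MT} itself, but establishes its $\AMT$-analogue (Conjecture \ref{Conjecture: Modified}, proved as Theorem \ref{Theorem: Extended Moore-Tachikawa}) exactly as you do --- by using Kostant's results to show that $\mathcal{S}^{\vee}$ is an admissible Hartogs slice to $T^*G\tto\g^*$ (Lemma \ref{Lemma: Hartogs}) and then invoking Theorem \ref{60ijaoc7}, so that the cup goes to $G\times\mathcal{S}^{\vee}$ --- and it likewise defers the original $\MT$-valued statement to the finite-generation question, known only in type A via Ginzburg--Kazhdan and Braverman--Finkelberg--Nakajima. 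So your argument is correct and takes essentially the same route, including the honest acknowledgement that the statement as phrased remains conjectural outside type A.
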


We discuss the status of this conjecture in the next subsection.

\subsection{The Moore--Tachikawa conjecture in $\AMT$}\label{Subsection: MT in AMT}
Let $G$ be a connected complex semisimple affine algebraic group with Lie algebra $\g$. On the other hand, recall the functor $\mathcal{F}:\MT\longrightarrow\AMT$. Composing the conjectural TQFT $\eta_G:\Cob_2\longrightarrow\MT$ in Conjecture \ref{Conjecture: MT} with $\mathcal{F}$ yields the following conjecture.   

\begin{conjecture}[Moore--Tachikawa conjecture in $\AMT$]\label{Conjecture: Modified}
Let $G$ be a connected semisimple affine algebraic group with Lie algebra $\g$. Consider a principal $\mathfrak{sl}_2$-triple $(e,h,f)\in\g^{\times 3}$, and set $\mathcal{S}\coloneqq e+\g_f$. There exists a TQFT $\eta_{T^*G}:\Cob_2\longrightarrow\AMT$ satisfying $\eta_{T^*G}(S^1)=(T^*G\tto\g^*)$ and $\eta_{T^*G}(\begin{tikzpicture}[
	baseline=-2.5pt,
	every tqft/.append style={
		transform shape, rotate=90, tqft/circle x radius=4pt,
		tqft/circle y radius= 2pt,
		tqft/boundary separation=0.6cm, 
		tqft/view from=incoming,
	}
	]
	\pic[
	tqft/cup,
	name=d,
	every incoming lower boundary component/.style={draw},
	every outgoing lower boundary component/.style={draw},
	every incoming boundary component/.style={draw},
	every outgoing boundary component/.style={draw},
	cobordism edge/.style={draw},
	cobordism height= 1cm,
	];
\end{tikzpicture})=G\times\mathcal{S}^{\vee}$.
\end{conjecture}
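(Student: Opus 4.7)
The plan is to apply Theorem \ref{2vumxgkk} (Main Theorem \ref{Theorem: Main 2}) to the cotangent groupoid $T^*G \tto \g^*$ by exhibiting the Slodowy slice $\mathcal{S}^\vee$ as an admissible Hartogs slice. The values on $S^1$ and the cup cobordism will then be read off from Theorem \ref{2vumxgkk} and Theorem \ref{60ijaoc7}, respectively.

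First I would take $U = \gdreg$ and observe that $\mathcal{S}^\vee = e^\vee + \g_f^\vee$ is a smooth closed affine subvariety of $\g^*$ (being an affine subspace), while Kostant's foundational work \cite{kos:59,kos:63} gives that $\g^* \setminus \gdreg$ has codimension at least three in $\g^*$. Next I would verify that $\mathcal{S}^\vee$ is a global slice to $(T^*G)|_U \tto U$ in the sense of Definition \ref{Definition: Global slice}. The $T^*G$-orbits in $\g^*$ are the coadjoint orbits, and Kostant's principal slice theorem implies that $\mathcal{S}^\vee$ meets every regular coadjoint orbit in exactly one point. Transversality of these intersections follows from the Poisson transversality of $\mathcal{S}^\vee \s \g^*$ established in \cite{gan-gin:02}, i.e.\ from $T_\xi\g^* = T_\xi\mathcal{S}^\vee + \adst(\g)\xi$ for all $\xi \in \mathcal{S}^\vee$. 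Admissibility requires the stabilizer $(T^*G)_\xi = G_\xi$ to be abelian for all $\xi \in \mathcal{S}^\vee$; this follows from Kostant's structural description of regular centralizers in complex semisimple groups \cite{kos:59,kos:63}.

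Proposition \ref{Proposition: Hartogs} will then give that $T^*G \tto \g^*$ is Hartogs abelianizable, and Theorem \ref{2vumxgkk} will produce the desired TQFT $\eta_{T^*G} : \Cob_2 \to \AMT$ with $\eta_{T^*G}(S^1) = (T^*G \tto \g^*)$ by construction. By Theorem \ref{60ijaoc7}, the value on the cup cobordism is the Hamiltonian $T^*G$-scheme $\sss^{-1}(\mathcal{S}^\vee)$; under the left trivialization $T^*G \cong G \times \g^*$ employed in Subsection \ref{Subsection: MT}, the source map is projection onto the second factor, so $\sss^{-1}(\mathcal{S}^\vee) = G \times \mathcal{S}^\vee$, as required.

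The only substantive work lies in verifying that the Kostant slice satisfies the precise groupoid-theoretic conditions of an admissible Hartogs slice. Since each of these conditions is a direct translation of a classical result of Kostant, no new geometric input is required beyond the cited facts, and the conjecture reduces to an immediate corollary of the machinery developed in Sections \ref{Section: Shifted}--\ref{Section: Affinization}.
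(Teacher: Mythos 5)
Your proposal is correct and takes essentially the same route as the paper: the paper's own argument consists of showing that $\mathcal{S}^{\vee}$ is an admissible Hartogs slice to $T^*G\tto\g^*$ via Kostant's results (Lemma \ref{Lemma: Hartogs}) and then invoking Theorem \ref{60ijaoc7} (hence Theorem \ref{2vumxgkk}) to identify the values on $S^1$ and on the cup with $T^*G\tto\g^*$ and $G\times\mathcal{S}^{\vee}$, exactly as you do. The only difference is cosmetic: the paper reads off the cup value from the target-map preimage $\ttt^{-1}(\mathcal{S}^{\vee})$ in the left trivialization, whereas you use $\sss^{-1}(\mathcal{S}^{\vee})$, but either preimage is $G\times\mathcal{S}^{\vee}$ up to an equivariant isomorphism, so this does not affect the argument.
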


With some effort, unpublished results of Ginzburg--Kazhdan \cite{gin-kaz:23} can be understood as implying Conjecture \ref{Conjecture: Modified}. Conjecture \ref{Conjecture: MT} then reduces to whether $\eta_{T^*G}:\Cob_2\longrightarrow\AMT$ takes values in the subcategory $\MT$, i.e. whether certain algebras are finitely generated. In Lie type $A$, results of Braverman--Finkelberg--Nakajima \cite{bra-fin-nak:19} imply that the relevant algebras are indeed finitely generated.

\subsection{Proof of Moore--Tachikawa conjecture in $\AMT$}
We now show Conjecture \ref{Conjecture: Modified} to be a straightforward consequence of our shifted symplecto-geometric approach. Retain the notation and objects used in the previous subsection.

\begin{lemma}\label{Lemma: Hartogs}
The subvariety $\mathcal{S}^{\vee}\s\g^*$ is an admissible Hartogs slice to $T^*G\tto\g^*$.
\end{lemma}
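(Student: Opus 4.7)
The plan is to take $U = \mathfrak{g}^*_{\mathrm{reg}}$ as the open subset witnessing the Hartogs condition, and to verify the conditions defining an admissible Hartogs slice directly from Kostant's classical theorems \cite{kos:59,kos:63}, transported between $\mathfrak{g}$ and $\mathfrak{g}^*$ via the $G$-equivariant Killing isomorphism $(\cdot)^{\vee}:\mathfrak{g}\longrightarrow\mathfrak{g}^*$.

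First I would observe that $\mathcal{S}^{\vee}$ is smooth, closed, and affine in $\mathfrak{g}^*$: the slice $\mathcal{S} = e + \mathfrak{g}_f$ is an affine subspace of $\mathfrak{g}$, and $(\cdot)^{\vee}$ is a linear isomorphism. Next, I would recall that for semisimple $\mathfrak{g}$ one has $\mathrm{codim}_{\mathfrak{g}^*}(\mathfrak{g}^* \setminus \mathfrak{g}^*_{\mathrm{reg}}) \geq 3$ by Kostant, so that $U = \mathfrak{g}^*_{\mathrm{reg}}$ meets the Hartogs codimension requirement.

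The main step is to verify that $\mathcal{S}^{\vee}$ is a global slice to $T^*G|_{\mathfrak{g}^*_{\mathrm{reg}}} \tto \mathfrak{g}^*_{\mathrm{reg}}$, whose orbits are precisely the regular coadjoint orbits. Kostant's theorem on principal $\mathfrak{sl}_2$-triples provides three facts about $\mathcal{S}$: first, $\mathcal{S} \subseteq \mathfrak{g}_{\mathrm{reg}}$; second, $\mathcal{S}$ meets every regular adjoint orbit in exactly one point; and third, that intersection is transverse, i.e.\ $\mathfrak{g} = [\mathfrak{g}, x] \oplus T_x\mathcal{S}$ for all $x \in \mathcal{S}$. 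Applying the $G$-equivariant isomorphism $(\cdot)^{\vee}$ to these three statements transfers them verbatim to $\mathcal{S}^{\vee}$ and the coadjoint action, yielding the global slice property for $T^*G|_{\mathfrak{g}^*_{\mathrm{reg}}}$.

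Finally I would check admissibility. For any $\xi \in \mathfrak{g}^*$, the isotropy group $(T^*G)_\xi$ of the cotangent groupoid is the coadjoint stabilizer $G_\xi$. When $\xi \in \mathfrak{g}^*_{\mathrm{reg}}$, Kostant's theorem gives that $\mathfrak{g}_\xi$ is abelian, and a standard consequence (going back to Steinberg) of the theory of regular elements in connected semisimple groups shows that $G_\xi$ itself is abelian. This establishes that $\mathcal{S}^{\vee}$ is an admissible Hartogs slice to $T^*G \tto \mathfrak{g}^*$. The proof is essentially a careful citation of Kostant's structural results, so no step constitutes a genuine obstacle; the only care needed is in correctly translating between adjoint and coadjoint pictures via $(\cdot)^{\vee}$ and in checking admissibility at the group (rather than Lie algebra) level.
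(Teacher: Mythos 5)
Your proposal is correct and follows essentially the same route as the paper's proof: take $U=\g^*_{\mathrm{reg}}$ (which is the $G$-saturation of $\mathcal{S}^{\vee}$, since $G\cdot\mathcal{S}=\g_{\mathrm{reg}}$), use Kostant's codimension-three bound for the irregular locus, and transfer the single-point transverse intersection and abelian-stabilizer statements from the adjoint to the coadjoint picture via the $G$-equivariant isomorphism $(\cdot)^{\vee}$. The only cosmetic difference is attribution of the group-level abelianness of $G_\xi$ (Steinberg vs.\ Kostant), which both treatments handle by citation.
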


\begin{proof}
Write $G\cdot Y\s\g$ and $G\cdot Z\s\g^*$ for the $G$-saturations of $Y\s\g$ and $Z\s\g^*$, respectively. Our task is to verify the following:
\begin{itemize}
\item[\textup{(i)}] $G\cdot\mathcal{S}^{\vee}$ is open and has a complement of codimension at least two in $\g^*$;
\item[\textup{(ii)}] the restriction of $G\cdot S^{\vee}\longrightarrow (G\cdot S^{\vee})/G$ to $\mathcal{S}^{\vee}$ is a bijection;
\item[\textup{(iii)}] $\g^*=T_{\xi}(\mathcal{S}^{\vee})\oplus T_{\xi}(G\cdot \xi)$ for all $\xi\in\mathcal{S}^{\vee}$;
\item[\textup{(iv)}] $G_{\xi}$ is abelian for all $\xi\in\mathcal{S}^{\vee}$.
\end{itemize}
At the same time, recall that the isomorphism $(\cdot)^{\vee}:\g\longrightarrow\g^*$ is $G$-equivariant. Parts (i), (ii), and (iii) may therefore be rephrased as follows:
\begin{itemize}
\item[\textup{(i)}] $G\cdot\mathcal{S}$ is open and has a complement of codimension at least $2$ in $\g$;
\item[\textup{(ii)}] the restriction of $G\cdot S\longrightarrow (G\cdot\mathcal{S})/G$ to $\mathcal{S}$ is a bijection;
\item[\textup{(iii)}] $\g=T_{x}\mathcal{S}\oplus T_{x}(G\cdot x)$ for all $x\in\mathcal{S}$;
\item[\textup{(iv)}] $G_{x}$ is abelian for all $x\in\mathcal{S}$.
\end{itemize}
It is known that $G\cdot\mathcal{S}=\greg$, and that $\g\setminus\greg$ has codimension three in $\g$ \cite{kos:59,kos:63}. Parts (ii), (iii), and (iv) are then immediate and well-known implications of Kostant's work \cite{kos:59,kos:63}.
\end{proof}

\begin{theorem}\label{Theorem: Extended Moore-Tachikawa}
Conjecture \ref{Conjecture: Modified} is true.
\end{theorem}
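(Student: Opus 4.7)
The plan is to read the theorem as an immediate consequence of the machinery developed in Sections \ref{Section: Affinization} and \ref{Section: Special case}, with Lemma \ref{Lemma: Hartogs} supplying the input data. First I would invoke Lemma \ref{Lemma: Hartogs} to see that $\mathcal{S}^{\vee} \s \g^*$ is an admissible Hartogs slice to the affine symplectic groupoid $T^*G \tto \g^*$. Proposition \ref{Proposition: Hartogs} then produces a Hartogs abelianization $(T^*G)|_{\mathcal{S}^{\vee}} \tto \mathcal{S}^{\vee}$, so that $T^*G$ is Hartogs abelianizable. Applying Theorem \ref{2vumxgkk} yields a TQFT
\[
\eta_{T^*G} : \Cob_2 \too \AMT,
\]
and, tautologically from its construction as a commutative Frobenius object with underlying object $T^*G$, we have $\eta_{T^*G}(S^1) = (T^*G \tto \g^*)$.

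The next step is to match the image of the cobordism $\emptyset \longrightarrow S^1$ under $\eta_{T^*G}$ with the Hamiltonian $G$-variety $G \times \mathcal{S}^{\vee}$, viewed in $\AMT$ via the functor $\mathcal{F} : \MT \longrightarrow \AMT$ of Subsection \ref{Subsection: MT category}. Theorem \ref{60ijaoc7} identifies this image with the Hamiltonian $T^*G$-scheme $\sss^{-1}(\mathcal{S}^{\vee})$, where $\sss$ denotes the source map of $T^*G \tto \g^*$. Under the left trivialization $T^*G \cong G \times \g^*$ fixed in Subsection \ref{Subsection: MT}, the source map becomes projection onto the $\g^*$-factor, so $\sss^{-1}(\mathcal{S}^{\vee}) = G \times \mathcal{S}^{\vee}$; the action of $T^*G$ by groupoid left multiplication then corresponds to the standard cotangent lift of left multiplication of $G$ on itself, restricted to $\mu_2^{-1}(\mathcal{S}^{\vee})$. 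This is exactly $\mathcal{F}(G \times \mathcal{S}^{\vee})$, which completes the identification.

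There is no substantive new obstacle beyond the bookkeeping just described; the theorem really is a direct corollary of the foundational results already in place. The one point that warrants care is the verification that the source/target and moment map conventions on $T^*G$ match those in Subsection \ref{Subsection: MT}, so that the Hamiltonian $T^*G$-scheme $\sss^{-1}(\mathcal{S}^{\vee})$ reproduces the classical Moore--Tachikawa variety $G \times \mathcal{S}^{\vee}$, and not a twisted or dualized variant obtained by swapping $\mu_1$ with $-\mu_1$ or $\sss$ with $\ttt$.
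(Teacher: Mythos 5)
Your proposal is correct and follows essentially the same route as the paper: invoke Lemma \ref{Lemma: Hartogs} and Theorem \ref{60ijaoc7}, then identify the preimage of $\mathcal{S}^{\vee}$ under the structure map of $T^*G\tto\g^*$ with the Moore--Tachikawa variety $G\times\mathcal{S}^{\vee}$. The only cosmetic difference is that the paper carries out this last identification via the target map $\ttt(g,\xi)=\xi$ in the left trivialization rather than the source map, which is precisely the source/target convention check you flagged as the point needing care.
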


\begin{proof}
Recall that the target morphism $\ttt:T^*G\longrightarrow\g^*$ is given by $\ttt(g,\xi)=\xi$. It follows that $\ttt^{-1}(S^{\vee})=G\times\mathcal{S}^{\vee}$. Theorem \ref{60ijaoc7} and Lemma \ref{Lemma: Hartogs} then imply the desired result.
\end{proof}

\begin{remark}
There is a multiplicative counterpart to Theorem \ref{Theorem: Extended Moore-Tachikawa}. To obtain it, one replaces $T^*G$ and $\mathcal{S}^{\vee}\s\g^*$ with the quasi-Hamiltonian $G$-variety $D(G)\coloneqq G\times G$ and a Steinberg slice in $G$, respectively. This perspective is explored in the second named author's joint work with B\u{a}libanu \cite{bal-may:22}.
\end{remark}

\section{Examples involving Slodowy slices}\label{Section: Examples}
As with the previous two sections, we work exclusively over $\mathbb{C}$. It is tempting to wonder if Main Theorem \ref{Theorem: Main 2} gives examples of TQFTs that are fundamentally different from those conjectured by Moore--Tachikawa. Perhaps reassuringly, such examples exist. The current section is devoted to examples of this sort that involve Slodowy slices. We begin by addressing some Poisson-geometric properties of Slodowy slices. Particular attention is paid to the degeneracy locus of a Slodowy slice, i.e. the locus on which the Poisson bivector has non-maximal rank. The discussion subsequently turns to specific Slodowy slices. We restrict $T^*\SL_n\tto\mathfrak{sl}_n^*$ to a Slodowy slice to the minimal nilpotent orbit in $\mathfrak{sl}_n^*$. This restriction is shown to be Hartogs abelianizable. By Main Theorem \ref{Theorem: Main 2}, it determines a TQFT. 

\subsection{Some general comments}\label{Subsection: General}
Recall that the \textit{rank} of a smooth Poisson variety $X$ is the supremum of the dimensions of its symplectic leaves. Write $\mathrm{rk}\hspace{1pt}X$ for this quantity, and $X_{\text{reg}}\subset X$ for the union of the $\mathrm{rk}\hspace{1pt}X$-dimensional symplectic leaves of $X$. It turns out that $X\setminus X_{\text{reg}}$ is the vanishing locus of $\pi^{\frac{1}{2}\mathrm{rk}\hspace{1pt}X}$, where $\pi$ is the Poisson bivector field. This implies that $X_{\text{reg}}$ is an open subvariety of $X$. Let us call a symplectic leaf of $X$ \textit{regular} if it is contained in $X_{\text{reg}}$.

\subsection{Slodowy slices}
Let $G$ be a connected semisimple affine algebraic group with Lie algebra $\g$ and rank $\ell$. Suppose that $(e,h,f)\in\g^{\times 3}$ is an $\mathfrak{sl}_2$-triple with Slodowy slice $\mathcal{S}\coloneqq e+\g_f\subset\g$. Recall the definitions of $\greg$ and $\gdreg$ from Subsection \ref{Subsection: MT}. Note that the definition of $\gdreg$ coincides with the Poisson-theoretic one, obtained by setting $X=\g^*$ in Subsection \ref{Subsection: General}.

Let us call an adjoint (resp. coadjoint) orbit of $G$ \textit{regular} if it is contained in $\greg$ (resp. $\gdreg$).

\begin{lemma}\label{Lemma: Adjoint}
If $\mathcal{O}\s\g^*$ is a regular coadjoint orbit, then $\mathcal{O}\cap\mathcal{S}^{\vee}\neq\emptyset$.
\end{lemma}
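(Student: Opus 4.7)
Via the $G$-equivariant Killing-form isomorphism $(\cdot)^\vee:\g\longrightarrow\g^*$, the claim reduces to the assertion that every regular adjoint orbit $\mathcal{O}\subseteq\g$ meets $\mathcal{S}=e+\g_f$. Let $\chi:\g\longrightarrow\mathfrak{c}$ be the adjoint quotient morphism. By Kostant's theorem, the fiber $\chi^{-1}(c)$ contains a unique regular orbit $\mathcal{O}_c^{\mathrm{reg}}$ for each $c\in\mathfrak{c}$, and this orbit is open and dense in $\chi^{-1}(c)$. Setting $\mathcal{S}_{\mathrm{reg}}:=\mathcal{S}\cap\greg$, the desired statement is therefore equivalent to surjectivity of $\chi|_{\mathcal{S}_{\mathrm{reg}}}:\mathcal{S}_{\mathrm{reg}}\longrightarrow\mathfrak{c}$.

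My plan is to show that $\chi(\mathcal{S}_{\mathrm{reg}})$ is a non-empty, open, $\mathbb{C}^*$-stable subset of $\mathfrak{c}$ that contains the origin, and then to conclude by a contracting argument. Non-emptiness I would extract from Slodowy's flatness theorem for $\chi|_{\mathcal{S}}:\mathcal{S}\longrightarrow\mathfrak{c}$: for generic $c\in\mathfrak{c}$ the fiber $\chi^{-1}(c)$ is a single regular semisimple orbit, so flatness forces $\mathcal{S}\cap\chi^{-1}(c)\neq\emptyset$, and this intersection automatically lies in $\greg$. Openness of $\chi(\mathcal{S}_{\mathrm{reg}})$ then follows because $\mathcal{S}_{\mathrm{reg}}$ is open in $\mathcal{S}$ and flat morphisms are open.

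The main obstacle is showing $0\in\chi(\mathcal{S}_{\mathrm{reg}})$, i.e.\ that $\mathcal{S}$ meets the principal nilpotent orbit $\mathcal{O}_{\mathrm{prin}}$. I would attack this via a dimension count using Slodowy's transversality theorem: $\mathcal{S}$ meets every adjoint orbit transversely, so $\dim(\mathcal{S}\cap\mathcal{O}')=\dim\mathcal{O}'-(\dim\g-\dim\g_e)$ whenever this intersection is non-empty. Flatness of $\chi|_{\mathcal{S}}$ guarantees that $\mathcal{S}\cap\mathcal{N}=\chi|_\mathcal{S}^{-1}(0)$ has pure dimension $\dim\g_e-\ell$. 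Every non-principal nilpotent orbit has codimension at least two in $\mathcal{N}$, so the intersections $\mathcal{S}\cap\mathcal{O}'$ for $\mathcal{O}'\neq\mathcal{O}_{\mathrm{prin}}$ all have dimension at most $\dim\g_e-\ell-2$; hence $\mathcal{S}\cap\mathcal{O}_{\mathrm{prin}}$ must be non-empty in order for $\mathcal{S}\cap\mathcal{N}$ to achieve its expected dimension.

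To finish, I would invoke the contracting $\mathbb{C}^*$-action $\psi_t(x)=t^2\mathrm{Ad}_{\rho(t^{-1})}(x)$ associated to the $\mathfrak{sl}_2$-triple, where $\rho:\mathbb{C}^*\longrightarrow G$ is the cocharacter determined by $h$. This action preserves $\mathcal{S}$ and $\greg$, hence preserves $\mathcal{S}_{\mathrm{reg}}$, and descends to a $\mathbb{C}^*$-action on $\mathfrak{c}$ with strictly positive weights. Consequently $\chi(\mathcal{S}_{\mathrm{reg}})$ is an open, $\mathbb{C}^*$-stable neighbourhood of $0$ in $\mathfrak{c}$. A standard argument then gives $\chi(\mathcal{S}_{\mathrm{reg}})=\mathfrak{c}$: for any $c\in\mathfrak{c}$, the point $\psi_t(c)$ lies in $\chi(\mathcal{S}_{\mathrm{reg}})$ for sufficiently small $t$, and $\mathbb{C}^*$-stability propagates this back to $c$ itself.
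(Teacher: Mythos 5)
Your argument is correct, but it reaches the lemma by a genuinely different route than the paper. The paper argues fiberwise for every regular orbit at once: by Slodowy's theorem, $\pi\vert_{\mathcal{S}^{\vee}}$ is \emph{faithfully} flat with irreducible fibers of dimension $\dim\mathcal{S}-\ell$, so $\mathcal{S}^{\vee}\cap\overline{\mathcal{O}}$ is non-empty of that dimension; since $\overline{\mathcal{O}}\setminus\mathcal{O}$ is a finite union of orbits of strictly smaller dimension, transversality bounds their intersections with $\mathcal{S}^{\vee}$ by dimension $<\dim\mathcal{S}-\ell$, whence $\mathcal{S}^{\vee}$ must meet $\mathcal{O}$ itself. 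You run the same transversality-plus-flatness dimension count only at the central fiber (using that non-principal nilpotent orbits have codimension at least two in $\mathcal{N}$ and that there are finitely many of them) to get $0\in\chi(\mathcal{S}_{\mathrm{reg}})$, and then globalize via the Kazhdan $\mathbb{C}^{*}$-action: openness of $\chi(\mathcal{S}_{\mathrm{reg}})$ from flatness, stability under the induced action on $\mathfrak{c}$ (weights $2d_i>0$), and contraction to $0$ give surjectivity of $\chi\vert_{\mathcal{S}_{\mathrm{reg}}}$, which Kostant's uniqueness of the regular orbit in each fiber converts into the lemma. Your route needs only flatness of $\chi\vert_{\mathcal{S}}$ (openness and equidimensionality of fibers) rather than faithful flatness or irreducibility of the slice fibers, since the contraction supplies surjectivity; the cost is the extra equivariance machinery, which the paper's direct argument avoids --- indeed your central-fiber dimension count, run verbatim over an arbitrary fiber $\overline{\mathcal{O}}$ (whose boundary orbits have dimension $<\dim\mathcal{O}$), is essentially the paper's proof. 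Two cosmetic points: in the final step the point of $\mathfrak{c}$ is moved by the induced positive-weight action, not by $\psi_t$ itself, and you should say explicitly that the nilpotent orbits are finite in number when bounding the dimension of $\mathcal{S}\cap(\mathcal{N}\setminus\mathcal{O}_{\mathrm{prin}})$.
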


\begin{proof}
	Consider the adjoint quotient $\pi:\g^*\longrightarrow\mathrm{Spec}(\mathbb{C}[\g^*]^G)\eqqcolon\mathfrak{c}$. The closure $\overline{\mathcal{O}}\subset\g^*$ is a fiber of $\pi$ \cite[Theorem 3]{kos:63}. On the other hand, the restriction $\pi\big\vert_{\mathcal{S}^{\vee}}:\mathcal{S}^{\vee}\longrightarrow\mathfrak{c}$ is known to be faithfully flat with irreducible fibers of dimension $\dim\mathcal{S}-\ell$ \cite{slo:80,slo2:80}. It follows that $\mathcal{S}^{\vee}\cap\overline{\mathcal{O}}$ is a non-empty, $(\dim\mathcal{S}-\ell)$-dimensional fiber of $\pi\big\vert_{\mathcal{S}^{\vee}}$. We also know that $\overline{\mathcal{O}}\setminus\mathcal{O}$ is a union of finitely many coadjoint orbits \cite[Theorem 3]{kos:63}, all of dimension strictly less than $\dim\mathcal{O}$. The intersection of each such orbit with $\mathcal{S}^{\vee}$ must therefore have dimension strictly less than $\dim\mathcal{O}+\dim\mathcal{S}-\dim\g=\dim\mathcal{S}-\ell$. Since $\mathcal{S}^{\vee}\cap\overline{\mathcal{O}}$ is non-empty and $(\dim\mathcal{S}-\ell)$-dimensional, we must have $\mathcal{S}^{\vee}\cap\mathcal{O}\neq\emptyset$.
\end{proof}

Recall the $G$-module isomorphism $(\cdot)^{\vee}:\g\longrightarrow\g^*$ discussed in Subsection \ref{Subsection: MT}. The subvariety $\mathcal{S}^{\vee}=(e+\g_f)^{\vee}$ is a Poisson transversal in $\g^*$ \cite{gan-gin:02}. As such, $\mathcal{S}^{\vee}$ is a Poisson variety. We may therefore consider the number $\mathrm{rank}\hspace{2pt}\mathcal{S}^{\vee}$ and locus $(\mathcal{S}^{\vee})_{\text{reg}}\s\mathcal{S}^{\vee}$. 

\begin{lemma}\label{Lemma: Intersections with regular orbits}
	We have $\mathrm{rank}\hspace{2pt}\mathcal{S}^{\vee}=\dim\mathcal{S}-\ell$ and $(\mathcal{S}^{\vee})_{\emph{reg}}=\mathcal{S}^{\vee}\cap\g^*_{\emph{reg}}$.	
\end{lemma}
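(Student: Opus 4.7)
The strategy is to identify the symplectic leaves of $\mathcal{S}^{\vee}$ with transverse intersections of coadjoint orbits with $\mathcal{S}^{\vee}$, and then use the dimension count together with Lemma \ref{Lemma: Adjoint}.

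First I would recall a standard fact about Poisson transversals: if $Y$ is a Poisson transversal in a Poisson manifold $X$, then the symplectic leaves of $Y$ are precisely the connected components of the intersections $Y\cap L$, where $L$ ranges over the symplectic leaves of $X$, and each such intersection is clean and transverse. Applied to $\mathcal{S}^{\vee}\s\g^*$, whose Poisson transversality is recalled just before the lemma, this shows that the symplectic leaves of $\mathcal{S}^{\vee}$ are the connected components of $\mathcal{S}^{\vee}\cap\mathcal{O}$, where $\mathcal{O}$ runs over the coadjoint orbits of $G$ in $\g^*$. Transversality yields the dimension formula
\[
\dim(\mathcal{S}^{\vee}\cap\mathcal{O}) = \dim\mathcal{O} + \dim\mathcal{S}^{\vee} - \dim\g^* = \dim\mathcal{O} + \dim\mathcal{S} - \dim\g
\]
at any point of $\mathcal{S}^{\vee}\cap\mathcal{O}$.

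Next, since every coadjoint orbit has dimension at most $\dim\g-\ell$, with equality if and only if the orbit is regular, the right-hand side is at most $\dim\mathcal{S}-\ell$, with equality exactly when $\mathcal{O}\s\g^*\reg$. Lemma \ref{Lemma: Adjoint} ensures that at least one regular coadjoint orbit meets $\mathcal{S}^{\vee}$, so a leaf of dimension $\dim\mathcal{S}-\ell$ is realized. This establishes $\operatorname{rank}\mathcal{S}^{\vee}=\dim\mathcal{S}-\ell$.

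Finally, a point $\xi\in\mathcal{S}^{\vee}$ lies in $(\mathcal{S}^{\vee})_{\text{reg}}$ if and only if the symplectic leaf of $\mathcal{S}^{\vee}$ through $\xi$ has maximal dimension $\dim\mathcal{S}-\ell$, which by the above dimension formula occurs if and only if the coadjoint orbit $G\cdot\xi$ is regular, i.e.\ if and only if $\xi\in\gdreg$. This gives the second identity $(\mathcal{S}^{\vee})_{\text{reg}}=\mathcal{S}^{\vee}\cap\gdreg$. The main point requiring care is the Poisson transversal description of the leaves and the non-emptiness of $\mathcal{S}^{\vee}\cap\mathcal{O}$ for some regular $\mathcal{O}$; the former is standard (e.g.\ from Frejlich--M\u{a}rcu\cb{t}), and the latter is exactly Lemma \ref{Lemma: Adjoint}.
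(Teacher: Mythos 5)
Your proposal is correct and follows essentially the same route as the paper: both use the Poisson-transversal leaf dimension formula $\dim L_\xi = \dim(G\cdot\xi)+\dim\mathcal{S}-\dim\g$ and then invoke Lemma \ref{Lemma: Adjoint} to guarantee that regular coadjoint orbits meet $\mathcal{S}^{\vee}$, so the maximal leaf dimension $\dim\mathcal{S}-\ell$ is attained exactly on $\mathcal{S}^{\vee}\cap\g^*_{\mathrm{reg}}$. The only cosmetic difference is that you phrase the leaf description via the general Poisson-transversal statement (leaves are components of $\mathcal{S}^{\vee}\cap\mathcal{O}$) while the paper works directly with tangent-space dimensions, and you compute the rank first and the regular locus second rather than the reverse.
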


\begin{proof}
	Suppose that $\xi\in\mathcal{S}^{\vee}$. Write $L_{\xi}\subset\mathcal{S}^{\vee}$ for the symplectic leaf of $\mathcal{S}^{\vee}$ containing $\xi$. It follows that $\dim T_{\xi}L_{\xi}=\dim (G\cdot\xi)+\dim\mathcal{S}-\dim\g$. This implies that $\dim T_{\xi}L_{\xi}\geq \dim T_{\eta}L_{\eta}$ for all $\eta\in\mathcal{S}^{\vee}$ if and only if $\dim (G\cdot\xi)\geq\dim(G\cdot\eta)$ for all $\eta\in\mathcal{S}^{\vee}$. A rephrased version is that $x\in(\mathcal{S}^{\vee})_{\text{reg}}$ if and only if $\dim\g_{\xi}\leq\dim\g_{\eta}$ for all $\eta\in\mathcal{S}^{\vee}$. Lemma \ref{Lemma: Adjoint} tells us that the latter condition holds if and only if $\xi\in\gdreg$. We conclude that $(\mathcal{S}^{\vee})_{\text{reg}}=\mathcal{S}^{\vee}\cap\g^*_{\text{reg}}$.
	
	Now suppose that $\xi\in (\mathcal{S}^{\vee})_{\text{reg}}=\mathcal{S}^{\vee}\cap\g^*_{\text{reg}}$. We have $$\dim T_{\xi}L_{\xi}=\dim (G\cdot\xi)+\dim\mathcal{S}-\dim\g=(\dim\g-\ell)+\dim\mathcal{S}-\ell=\dim\mathcal{S}-\ell.$$ It follows that $\mathrm{rank}\hspace{2pt}\mathcal{S}^{\vee}=\dim\mathcal{S}-\ell$. 
\end{proof}

\begin{proposition}\label{Proposition: Symplectic leaves in a Slodowy slice}
	The following statements are true.
	\begin{itemize}
		\item[\textup{(i)}] The regular symplectic leaves of $\mathcal{S}^{\vee}$ are the intersections of $\mathcal{S}^{\vee}$ with the regular coadjoint orbits.
		\item[\textup{(ii)}] The codimension of $\mathcal{S}^{\vee}\setminus(\mathcal{S}^{\vee})_{\emph{reg}}$ in $\mathcal{S}^{\vee}$ is at least three.
	\end{itemize}
\end{proposition}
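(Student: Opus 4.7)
The plan is to exploit the Poisson transversality of $\mathcal{S}^\vee\s\g^*$ together with the classical fact that the Slodowy slice $\mathcal{S}$ is transverse to every $G$-orbit it meets. For part (i), I use that for a Poisson transversal each symplectic leaf is a connected component of its intersection with a symplectic leaf of the ambient space; here, the ambient leaves are the coadjoint orbits of $G$. If $\mathcal{O}\s\g^*$ is a regular coadjoint orbit, transversality gives $\dim(\mathcal{S}^\vee\cap\mathcal{O})=\dim\mathcal{S}+\dim\mathcal{O}-\dim\g=\dim\mathcal{S}-\ell$, matching $\mathrm{rank}(\mathcal{S}^\vee)$ by Lemma \ref{Lemma: Intersections with regular orbits}. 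Reusing the argument from the proof of Lemma \ref{Lemma: Adjoint}, the fibre $\pi|_{\mathcal{S}^\vee}^{-1}(\pi(\mathcal{O}))=\mathcal{S}^\vee\cap\overline{\mathcal{O}}$ is irreducible of dimension $\dim\mathcal{S}-\ell$, while $\mathcal{S}^\vee\cap(\overline{\mathcal{O}}\setminus\mathcal{O})$ is a union of transverse intersections with coadjoint orbits of strictly smaller dimension. Thus $\mathcal{S}^\vee\cap\mathcal{O}$ is open and dense in the irreducible fibre, is itself irreducible and hence connected, and is a single regular leaf. Conversely, every regular leaf $L\s(\mathcal{S}^\vee)_{\mathrm{reg}}=\mathcal{S}^\vee\cap\gdreg$ lies in some $\mathcal{S}^\vee\cap\mathcal{O}_L$ with $\mathcal{O}_L$ regular (regularity being constant along coadjoint orbits), and the previous sentence identifies this intersection with a single leaf, so $L=\mathcal{S}^\vee\cap\mathcal{O}_L$.

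For part (ii), I transport the statement to $\g$ via the $G$-equivariant Killing form isomorphism, reducing it to the codimension bound $\dim(\mathcal{S}\cap N)\leq\dim\mathcal{S}-3$, where $N\coloneqq\g\setminus\greg$. Consider the action map
\[
\phi\colon G\times(\mathcal{S}\cap N)\too N,\qquad(g,x)\mtoo\Ad_g(x),
\]
which is well-defined since $N$ is $G$-invariant. Fix $y=\Ad_{g_0}(x_0)\in\im\phi$; the fibre $\phi^{-1}(y)$ consists of pairs $(g,x)$ with $x\in(\mathcal{S}\cap N)\cap G\cdot x_0\s\mathcal{S}\cap G\cdot x_0$, and transversality of $\mathcal{S}$ to $G\cdot x_0$ bounds the dimension of this locus by $\dim\mathcal{S}+\dim(G\cdot x_0)-\dim\g$; for each such $x$, the set of admissible $g$ is a coset of $G_y$ of dimension $\dim\g-\dim(G\cdot x_0)$. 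Summing gives $\dim\phi^{-1}(y)\leq\dim\mathcal{S}$.

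It follows that $\dim(\im\phi)\geq\dim G+\dim(\mathcal{S}\cap N)-\dim\mathcal{S}$. Since $\im\phi\s N$ and Kostant's theorem gives $\dim N=\dim\g-3$ for $\g$ semisimple, we conclude
\[
\dim(\mathcal{S}\cap N)\leq\dim N-\dim G+\dim\mathcal{S}=\dim\mathcal{S}-3,
\]
as desired. The main technical care lies in confirming the transversality formula at each relevant $x_0$, which is unproblematic since Slodowy slices are classically transverse to every $G$-orbit they meet.
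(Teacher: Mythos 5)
Your proof is correct, and while your part (i) follows essentially the paper's route, your part (ii) is a genuinely different argument. For (i), both you and the paper rest on Slodowy's flatness/irreducibility of $\pi\big\vert_{\mathcal{S}^{\vee}}$ and Kostant's description of the fibres of $\pi$; you organize the argument around the general fact that the leaves of a Poisson transversal are the connected components of its intersections with the ambient leaves (this deserves a citation, e.g.\ Frejlich--M\u{a}rcu\c{t}, or a two-line argument, but it is standard), and then get connectedness of $\mathcal{S}^{\vee}\cap\mathcal{O}$ from its being open, hence dense and irreducible, in the irreducible fibre -- the paper instead phrases this as the fibre being the closure of a single $(\dim\mathcal{S}-\ell)$-dimensional leaf. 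For (ii) the two proofs diverge: the paper stratifies over the adjoint quotient, using that the non-semisimple locus $\mathfrak{d}\s\mathfrak{c}$ has codimension $1$ and that inside each fibre the non-regular part has codimension at least $2$ (orbit dimensions drop by even amounts), summing to codimension $3$; you instead run the action map $G\times(\mathcal{S}\cap N)\too N$, bound the fibres by $\dim\mathcal{S}$ using transversality of $\mathcal{S}$ to every orbit it meets (classical, and subsumed in the Gan--Ginzburg Poisson-transversality the paper already cites), and import Kostant's theorem that $N=\g\setminus\greg$ has codimension $3$. Your route is shorter and proves a more general slice principle ($\codim_{\mathcal{S}}(\mathcal{S}\cap Z)\geq\codim_{\g}Z$ for $G$-stable closed $Z$), at the price of invoking Kostant's codimension-three theorem directly (which the paper uses elsewhere anyway, in its Lemma on the Hartogs slice), whereas the paper's stratified count only needs the codimension-one discriminant and evenness of orbit dimensions and stays entirely within the geometry of $\pi\big\vert_{\mathcal{S}^{\vee}}$ already set up for part (i).
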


\begin{proof}
	We begin by proving (i). Consider the adjoint quotient $\pi:\g^*\longrightarrow\mathrm{Spec}(\mathbb{C}[\g^*]^G)\eqqcolon\mathfrak{c}$. The restriction $\pi\big\vert_{\mathcal{S}^{\vee}}:\mathcal{S}^{\vee}\longrightarrow\mathfrak{c}$ is known to be faithfully flat with irreducible fibers of dimension $\dim\mathcal{S}-\ell$ \cite{slo:80,slo2:80}. Since each fiber of $\pi$ is a finite union of coadjoint orbits \cite[Theorem 3]{kos:63}, each fiber of $\pi\big{\vert_{S^{\vee}}}$ must be a finite union of symplectic leaves of $\mathcal{S}^{\vee}$. It also follows that each fiber of $\pi\big\vert_{S^{\vee}}$ is the closure of a $(\dim\mathcal{S}-\ell)$-dimensional symplectic leaf of $\mathcal{S}^{\vee}$.
	
	Suppose that $\alpha\in\mathfrak{c}$. There exists a unique regular coadjoint orbit $\mathcal{O}_{\alpha}\subset\g$ with the property that $\overline{\mathcal{O}_{\alpha}}=\pi^{-1}(\alpha)$ \cite[Theorem 3]{kos:63}. Each irreducible component of $\mathcal{S}^{\vee}\cap\mathcal{O}_{\alpha}$ is a $(\dim\mathcal{S}-\ell)$-dimensional symplectic leaf of $\mathcal{S}^{\vee}$ contained in $\mathcal{S}^{\vee}\cap\pi^{-1}(\alpha)$. Since $\mathcal{S}^{\vee}\cap\pi^{-1}(\alpha)$ is the closure of a $(\dim\mathcal{S}-\ell)$-dimensional symplectic leaf of $\mathcal{S}^{\vee}$, this leaf must be $\mathcal{S}^{\vee}\cap\mathcal{O}_{\alpha}$. We also know that $\alpha\mapsto\mathcal{O}_{\alpha}$ defines a bijection from $\mathfrak{c}$ to the set of regular coadjoint orbits \cite[Theorem 2]{kos:63}. It follows that each regular coadjoint orbit intersects $\mathcal{S}^{\vee}$ in a symplectic leaf. It is also clear that the regular symplectic leaves of $\mathcal{S}^{\vee}$ are the irreducible components of the intersections of $\mathcal{S}^{\vee}$ with the regular coadjoint orbits. These last two sentences combine to imply (i).
	
	We now verify (ii). Recall that a coadjoint orbit $\mathcal{O}\s\g^*$ is called \textit{semisimple} if it corresponds to a semisimple adjoint orbit under $(\cdot)^{\vee}:\g\longrightarrow\g^*$. Consider the locus $\mathfrak{c}^{\circ}\coloneqq\{\alpha\in\mathfrak{c}:\mathcal{O}_{\alpha}\text{ is semisimple}\}$ and its complement $\mathfrak{d}\coloneqq\mathfrak{c}\setminus\mathfrak{c}^{\circ}$. One knows that $\mathcal{O}_{\alpha}$ is closed for all $\alpha\in\mathfrak{c}^{\circ}$. Note also that $(\mathcal{S}^{\vee}\setminus(\mathcal{S}^{\vee})_{\text{reg}})\cap\pi^{-1}(\alpha)=\mathcal{S}^{\vee}\cap(\overline{\mathcal{O}_{\alpha}}\setminus\mathcal{O}_{\alpha})$ for all $\alpha\in\mathfrak{c}$, as follows from the second paragraph of this proof. We conclude that \begin{align*}\mathcal{S}^{\vee}\setminus(\mathcal{S}^{\vee})_{\text{reg}} & =\bigcup_{\alpha\in\mathfrak{c}}((\mathcal{S}^{\vee}\setminus(\mathcal{S}^{\vee})_{\text{reg}})\cap\pi^{-1}(\alpha)) \\ & =\bigcup_{\alpha\in\mathfrak{c}}(\mathcal{S}^{\vee}\cap(\overline{\mathcal{O}_{\alpha}}\setminus\mathcal{O}_{\alpha}))\\
	&=\bigcup_{\alpha\in\mathfrak{d}}(\mathcal{S}^{\vee}\cap(\overline{\mathcal{O}_{\alpha}}\setminus\mathcal{O}_{\alpha}))\\
	&=\bigcup_{\alpha\in\mathfrak{d}}((\mathcal{S}^{\vee}\setminus(\mathcal{S}^{\vee})_{\text{reg}})\cap\pi^{-1}(\alpha)).\end{align*} Let us also observe that $(\mathcal{S}^{\vee}\setminus(\mathcal{S}^{\vee})_{\text{reg}})\cap\pi^{-1}(\alpha)$ has codimension at least $2$ in $\mathcal{S}^{\vee}\cap\pi^{-1}(\alpha)$ for all $\alpha\in\mathfrak{c}$; this is implied by the first paragraph of the proof. The desired result now follows from $\mathfrak{d}$ having codimension $1$ in $\mathfrak{c}$.
\end{proof}

\subsection{Slodowy slices to the minimal nilpotent orbit in $\mathfrak{sl}_n$}
Let us specialize to the case $\g=\mathfrak{sl}_n$. Consider the $\mathfrak{sl}_2$-triple $(e,h,f)\in\mathfrak{sl}_n^{\times 3}$ given by
$$e=\begin{bmatrix}0 & 1 & 0 & \cdots & 0\\ 0 & 0 & 0 & \cdots & 0 \\ \vdots & \vdots & \vdots & \ddots  & \vdots \\ 0 & 0 & 0 & \cdots & 0\end{bmatrix},\quad h=\begin{bmatrix}1 & 0 & 0 & \cdots & 0\\ 0 & -1 & 0 & \cdots & 0 \\ 0 & 0 & 0 & \cdots & 0\\ \vdots & \vdots & \vdots & \ddots  & \vdots \\ 0 & 0 & 0 & \cdots & 0\end{bmatrix},\quad\text{and}\quad f=\begin{bmatrix}0 & 0 & \cdots & 0\\ 1 & 0 & \cdots & 0 \\ 0 & 0 & \cdots & 0\\ \vdots & \vdots & \ddots  & \vdots \\ 0 & 0 & \cdots & 0\end{bmatrix}.$$ Let us also consider the closed subvariety
$$\mathcal{T}\coloneqq\left\{\begin{bmatrix}0 & 1 & 0 & 0 & \cdots & 0\\ a_{n-2} & 0 & 1 & 0 & \cdots & 0 \\ \vdots & \vdots & \vdots & \ddots & \ddots  & \vdots \\ a_2 & 0 & 0 & 0 & \ddots & 0\\  a_1 & 0 & 0 & 0 & \cdots & 1\\  a_0 & 0 & 0 & 0 & \cdots & 0\end{bmatrix}:a_0,\ldots,a_{n-2}\in\mathbb{C}\right\}$$ of $\mathfrak{sl}_n$; it consists of the transposes of the trace-free $n\times n$ \textit{companion matrices}. A straightforward exercise reveals that $\mathcal{T}\subset\mathcal{S}\coloneqq e+(\mathfrak{sl}_n)_f$. It follows that $\mathcal{T}^{\vee}\s\mathcal{S}^{\vee}$.

\begin{proposition}\label{Proposition: Hartogs slice}
The subvariety $\mathcal{T}^{\vee}$ is a Hartogs slice to $\mathcal{S}^{\vee}$.
\end{proposition}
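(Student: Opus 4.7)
My plan is to view the claim as asserting that $\mathcal{T}^\vee$ is a Hartogs slice to the affine symplectic groupoid $\G \coloneqq (T^*\SL_n)|_{\mathcal{S}^\vee} = \sss^{-1}(\mathcal{S}^\vee) \cap \ttt^{-1}(\mathcal{S}^\vee)$ integrating the Poisson transversal $\mathcal{S}^\vee$, and to take the open subset to be $U \coloneqq (\mathcal{S}^\vee)\reg = \mathcal{S}^\vee \cap \gdreg$. By Proposition \ref{Proposition: Symplectic leaves in a Slodowy slice}(ii), the complement of $U$ in $\mathcal{S}^\vee$ has codimension at least three, so it suffices to verify that $\mathcal{T}^\vee$ is a global slice to $\G|_U$.

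First, I would recognize $\mathcal{T}$ as the standard Kostant slice for the principal $\mathfrak{sl}_2$-triple in $\sln$, presented in companion-matrix form. Every $T \in \mathcal{T}$ has minimal polynomial equal to its characteristic polynomial, so $\dim (\sln)_T = n-1 = \ell$ and $T$ is regular. Dualizing via the Killing form, $\mathcal{T}^\vee \s \gdreg$, whence $\mathcal{T}^\vee \s U$. Moreover $\mathcal{T}^\vee \cong \mathbb{A}^{n-1}$ is smooth, closed, and affine in $\mathcal{S}^\vee$.

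The orbits of $\G|_U$ are precisely the regular symplectic leaves of $\mathcal{S}^\vee$. By Proposition \ref{Proposition: Symplectic leaves in a Slodowy slice}(i), each such leaf is of the form $L = \mathcal{S}^\vee \cap \mathcal{O}$ for a unique regular coadjoint orbit $\mathcal{O} \s \mathfrak{sl}_n^*$. Kostant's theorem states that the Kostant slice meets $\mathcal{O}$ in a single point and does so transversely in $\mathfrak{sl}_n^*$; since $\mathcal{T}^\vee \s \mathcal{S}^\vee$, we obtain $\mathcal{T}^\vee \cap L = \mathcal{T}^\vee \cap \mathcal{O}$, which is a singleton. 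Transversality within $\mathcal{S}^\vee$ follows from a dimension count: at $\xi \in \mathcal{T}^\vee \cap L$, Lemma \ref{Lemma: Intersections with regular orbits} gives $\dim L = \dim \mathcal{S}^\vee - \ell$, while $\dim \mathcal{T}^\vee = \ell$, so $\dim T_\xi \mathcal{T}^\vee + \dim T_\xi L = \dim T_\xi \mathcal{S}^\vee$. Combined with $T_\xi L \s T_\xi \mathcal{O}$ and the Kostant transversality $T_\xi \mathcal{T}^\vee \cap T_\xi \mathcal{O} = 0$, this forces $T_\xi \mathcal{S}^\vee = T_\xi \mathcal{T}^\vee \oplus T_\xi L$.

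The main subtlety is to fix an interpretation of ``Hartogs slice to $\mathcal{S}^\vee$'' when $\mathcal{S}^\vee$ is packaged as a Poisson scheme rather than a groupoid: a specific groupoid must be specified, and the natural choice is $\G = (T^*\SL_n)|_{\mathcal{S}^\vee}$, which integrates the Poisson structure on $\mathcal{S}^\vee$. Once this is in place, no substantial further obstacle is anticipated; the argument is fundamentally a repackaging of Kostant's theorem on the principal slice together with the symplectic-leaf analysis of Proposition \ref{Proposition: Symplectic leaves in a Slodowy slice}.
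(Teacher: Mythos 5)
Your overall route coincides with the paper's: take $U=(\mathcal{S}^\vee)_{\mathrm{reg}}=\mathcal{S}^\vee\cap\gdreg$ (whose complement has codimension at least three by Proposition \ref{Proposition: Symplectic leaves in a Slodowy slice}(ii)), observe $\mathcal{T}^\vee\s(\mathcal{S}^\vee)_{\mathrm{reg}}$, and reduce via Proposition \ref{Proposition: Symplectic leaves in a Slodowy slice}(i) and Lemma \ref{Lemma: Intersections with regular orbits} to the claim that $\mathcal{T}$ meets every regular adjoint orbit exactly once and transversely. The one substantive flaw is how you justify that claim: $\mathcal{T}$ is \emph{not} the Kostant slice $e'+\g_{f'}$ of a principal $\mathfrak{sl}_2$-triple ``in companion-matrix form.'' For a principal triple, $\g_{f'}=\operatorname{span}\{f',f'^2,\dots,f'^{n-1}\}$ is supported on the subdiagonals, whereas $\mathcal{T}-e'$ is supported in the first column; these are different affine subspaces of $\sln$, so Kostant's section theorem does not literally apply to $\mathcal{T}$, and the sentence ``Kostant's theorem states that the Kostant slice meets $\mathcal{O}$ in a single point and does so transversely'' is being invoked for a set to which it does not refer. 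The statement you need is nevertheless true and classical, and it is exactly what the paper proves directly: the characteristic-polynomial map $f=(f_0,\dots,f_{n-2})$ restricts to an isomorphism $\mathcal{T}\longrightarrow\mathbb{C}^{n-1}$, equivalently every regular element of $\sln$ is conjugate to a unique transposed companion matrix (rational canonical form, together with the fact that $\mathrm{GL}_n$- and $\mathrm{SL}_n$-adjoint orbits agree). Your own observation that elements of $\mathcal{T}$ have minimal polynomial equal to characteristic polynomial gives regularity and the uniqueness half; you still need existence (every regular orbit meets $\mathcal{T}$), and the transversality $T_\xi\mathcal{T}\cap T_\xi\mathcal{O}=0$ should then be deduced from the bijectivity of $d(f|_{\mathcal{T}})_\xi$ together with $\ker df_\xi=T_\xi\mathcal{O}$ at regular points, rather than quoted from Kostant's slice theorem.

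With that substitution the argument closes, and in two respects your write-up is slightly more explicit than the paper's: the dimension count giving $T_\xi\mathcal{S}^\vee=T_\xi\mathcal{T}^\vee\oplus T_\xi L$ spells out the transversality inside $\mathcal{S}^\vee$, which the paper leaves implicit in the isomorphism $\pi\big\vert_{\mathcal{T}}\cong\mathfrak{c}$, and your insistence on fixing the groupoid $(T^*\SL_n)\big\vert_{\mathcal{S}^\vee}$ matches the paper's discussion immediately following the proposition.
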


\begin{proof}
	One may use \cite[Lemma 10]{kos:63} to conclude that $\mathcal{T}\subset(\mathfrak{sl}_n)_{\text{reg}}$. In other words, $\mathcal{T}\s\mathcal{S}\cap(\mathfrak{sl}_n)_{\text{reg}}$. It follows that $\mathcal{T}^{\vee}\s\mathcal{S}^{\vee}\cap(\mathfrak{sl}_n^*)_{\text{reg}}=(\mathcal{S}^{\vee})_{\text{reg}}$, where the last instance of equality comes from Lemma \ref{Lemma: Intersections with regular orbits}.
	It remains only to prove that each regular symplectic leaf $L\subset\mathcal{S}^{\vee}$ intersects $\mathcal{T}^{\vee}$ in a single point. Proposition \ref{Proposition: Symplectic leaves in a Slodowy slice}(i) makes this the task of proving that each regular coadjoint orbit intersects $\mathcal{T}^{\vee}$ in a single point. This is equivalent to each regular adjoint orbit intersecting $\mathcal{T}$ in a single point. In other words, it suffices to prove that $\mathcal{T}$ is a section of the adjoint quotient $\pi:\mathfrak{sl}_n\longrightarrow\mathrm{Spec}(\mathbb{C}[\mathfrak{sl}_n]^{\operatorname{SL}_n})$.
	
	Write $\det(tI_n-x)=t^n+f_{n-2}(x)t^{n-2}+\cdots+f_1(x)t+f_0(x)$ for the characteristic polynomial of $x\in\mathfrak{sl}_n$. The polynomials $f_0,\ldots,f_{n-2}$ are algebraically independent generators of $\mathbb{C}[\mathfrak{sl}_n]^{\operatorname{SL}_n}$. The adjoint quotient of $\mathfrak{sl}_n$ is thereby the map $f=(f_0,\ldots,f_{n-2}):\mathfrak{sl}_n\longrightarrow\mathbb{C}^{n-1}$. It is also straightforward to check that $$f\left(\begin{bmatrix}0 & 1 & 0 & 0 & \cdots & 0\\ a_{n-2} & 0 & 1 & 0 & \cdots & 0 \\ \vdots & \vdots & \vdots & \ddots & \ddots & \vdots \\ a_2 & 0 & 0 & 0 & \ddots & 0\\  a_1 & 0 & 0 & 0 & \cdots & 1\\  a_0 & 0 & 0 & 0 & \cdots & 0\end{bmatrix}\right)=(-a_0,\ldots,-a_{n-2})$$ for all $a_0,\ldots,a_{n-2}\in\mathbb{C}$. This makes it clear that the restriction of the adjoint quotient $\pi:\mathfrak{sl}_n\longrightarrow\mathfrak{c}$ to $\mathcal{T}$ is an isomorphism of varieties. The proof is therefore complete. 
\end{proof}

One symplectic groupoid integrating $\mathcal{S}^{\vee}$ is the restriction $\G\tto\mathcal{S}^{\vee}$ of $T^*\SL_n\tto\mathfrak{sl}_n^*$ to $\mathcal{S}^{\vee}$.  It is also clear that $\mathcal{T}^{\vee}\subset(\mathfrak{sl}_n)^*_{\text{reg}}$ \cite[Lemma 10]{kos:63}, so that $(\SL_n)_{\xi}$ is abelian for all $\xi\in\mathcal{T}^{\vee}$. These considerations combine with Proposition \ref{Proposition: Hartogs slice} to imply that $\mathcal{T}^{\vee}$ is an admissible Hartogs slice to $\G\tto\mathcal{S}^{\vee}$. In light of Theorem \ref{2vumxgkk} and Proposition \ref{Proposition: Hartogs}, $\G\tto\mathcal{S}^{\vee}$ determines a TQFT.

\section{Examples arising from non-reductive groups}\label{Section: Non-reductive}

We continue to work exclusively over $\mathbb{C}$. In this section, we begin by defining the notion of a \textit{Moore--Tachikawa group} $G$. The definition provides sufficient conditions for $T^*G\tto\g^*$ to be Hartogs abelianizable, and includes all reductive groups as examples. In this way, Moore--Tachikawa groups give rise to TQFTs. In order to obtain TQFTs beyond those conjectured by Moore--Tachikawa, we must give examples of non-reductive Moore--Tachikawa groups. Most of this section is devoted to the construction of such groups.

\subsection{Moore--Tachikawa groups}
Let $G$ be an affine algebraic group with Lie algebra $\g$. Given $\xi\in\g^*$, let $G_{\xi}\s G$ and $\g_{\xi}\s\g$ denote the centralizers of $\xi$ under the coadjoint representations of $G$ and $\g$, respectively. We define
$$\g^*_{\text{reg}}\coloneqq\{\xi\in\g^*:\dim\g_{\xi}\leq\dim\g_{\eta}\text{ for all }\eta\in\g^*\}.$$ 

\begin{definition}\label{Definition: MT group}
A \textit{Moore--Tachikawa group} is an affine algebraic group $G$ with the following properties:
\begin{itemize}
\item[\textup{(i)}] $\g^*\reg$ has a complement of codimension at least two in $\g^*$;
\item[\textup{(ii)}] $G_\xi$ is abelian for all $\xi \in \g^*\reg$;
\item[\textup{(iii)}] the pullback of the cotangent groupoid $T^*G\tto\g^*$ to $\g^*\reg$ is abelianizable, i.e. Morita equivalent to an abelian symplectic groupoid.
\end{itemize}
\end{definition}

A sufficient condition for (ii) and (iii) to hold is the existence of a smooth, closed subvariety $S\s\g^*$ with the following properties:
\begin{itemize}
\item $S\s\gdreg$;
\item $S$ intersects every coadjoint orbit in $\gdreg$ transversely in a singleton;
\item $G_{\xi}$ is abelian for all $\xi\in S$. 
\end{itemize}

One shows this condition to be sufficient in a manner analogous to the proof of Proposition \ref{Proposition: Hartogs}.

Definition \ref{Definition: MT group} provides sufficient conditions for $T^*G\tto\g^*$ to be Hartogs abelianizable. By combining this observation with Theorem \ref{2vumxgkk}, one concludes that every Moore--Tachikawa group induces a TQFT in $\AMT$. On the other hand, results of Kostant \cite{kos:59,kos:63} imply that every reductive group is Moore--Tachikawa. The TQFTs induced by reductive groups are essentially those conjectured by Moore--Tachikawa. This motivates us to find examples of non-reductive Moore--Tachikawa groups. We devote the next two subsections to this task.

\subsection{The semidirect product of $\SL_2$ and its standard representation}

Let $G$ be an affine algebraic group and $\rho : G \longrightarrow \mathrm{GL}(V)$ a finite-dimensional, algebraic representation.
Consider the group semidirect product $H \coloneqq G \ltimes_\rho V$, i.e.\ we consider $V$ as a group with addition, so that multiplication in $H$ is given by
\[
(g_1, v_1) \cdot (g_2, v_2) = (g_1 g_2, v_1 + \rho_{g_1}(v_2)).
\]
The Lie algebra of $H$ is then $\h \coloneqq \g \ltimes_{\rho} V$, with bracket
\[
[(x_1, v_1), (x_2, v_2)] = ([x_1, x_2], \rho_{x_1} v_2 - \rho_{x_2}(v_1)).
\]

\begin{proposition}
If $\rho$ is the standard representation of $\SL_2$, then the non-reductive group $H \coloneqq\SL_2\ltimes_\rho \C^2$ is Moore--Tachikawa. In particular, $H$ induces a TQFT $\Cob_2 \too \AMT$.
\end{proposition}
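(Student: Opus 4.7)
The plan is to verify the three conditions of Definition \ref{Definition: MT group} for $H = \SL_2 \ltimes_\rho \C^2$ by exhibiting an admissible global slice to the coadjoint action on $\h^*\reg$, and then to deduce the TQFT from Theorem \ref{2vumxgkk}. Condition (i) will follow from a short dimension count, while the sufficient condition stated just after Definition \ref{Definition: MT group} will yield (ii) and (iii).

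First I would work out the coadjoint action. Writing $(A, \beta) \in \h^* = \mathfrak{sl}_2^* \oplus (\C^2)^*$, a direct calculation shows that the infinitesimal coadjoint action of $(X, w) \in \mathfrak{sl}_2 \oplus \C^2$ takes the form
\[
\ad^*_{(X, w)}(A, \beta) = \bigl(\ad^*_X A + \mu(\beta, w),\; \rho^*_X \beta\bigr),
\]
where $\mu: (\C^2)^* \times \C^2 \to \mathfrak{sl}_2^*$ is defined by $\langle \mu(\beta, w), Y \rangle = \beta(\rho_Y w)$. The translation subgroup $\C^2 \subset H$ thus acts by $(A, \beta) \mapsto (A + \mu(\beta, w), \beta)$. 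For $\beta \ne 0$, the map $w \mapsto \mu(\beta, w)$ is injective with $2$-dimensional image, and the $\SL_2$-orbit of $\beta$ in $(\C^2)^*$ is $2$-dimensional; the $H$-orbit of $(A, \beta)$ is therefore $4$-dimensional, with $1$-dimensional stabilizer. For $\beta = 0$, the $\C^2$-action is trivial and orbits have dimension at most two. Hence $\h^*\reg = \{(A, \beta) : \beta \ne 0\}$ and its complement $\mathfrak{sl}_2^* \times \{0\}$ has codimension $2$ in the $5$-dimensional variety $\h^*$, verifying (i).

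The main step is to exhibit and verify an admissible global slice. Using a basis $(e, h, f)$ of $\mathfrak{sl}_2$ and the standard basis of $\C^2$, I would take
\[
S = \{(tf^*, e_1^*) : t \in \C\} \subset \h^*\reg,
\]
a smooth closed one-dimensional affine subvariety. Three properties must be checked. First, $S$ meets every $H$-orbit in $\h^*\reg$ transversely in a singleton: any $(A, \beta)$ with $\beta \ne 0$ can be $\SL_2$-translated to $\beta = e_1^*$, and the combined action of the residual stabilizer $U^- = \Stab_{\SL_2}(e_1^*)$ and of $\C^2$-translation (whose image along $S$ is $\mathrm{span}(e^*, h^*)$) reduces $A$ uniquely to a multiple $t f^*$. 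Uniqueness of $t$ rests on the explicit formula $\Ad^*_{\exp(sf)}(f^*) = s^2 e^* - 2sh^* + f^*$, and $t$ can be identified up to sign with the restriction to $S$ of the $H$-invariant $I(A, \beta) = \beta A J \beta^T$, where $J$ is the symplectic form on $\C^2$. Transversality $T_\xi S \oplus T_\xi(H \cdot \xi) = T_\xi \h^*$ follows from a short tangent-space calculation. Finally, the stabilizer Lie algebra at $\xi = (tf^*, e_1^*)$ is spanned by the single vector $(f, (2t, 0))$, and a direct parametrization of $H_\xi$ shows it is connected and one-dimensional, hence abelian.

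The hard part will be the bookkeeping in the one-point intersection argument, which requires carefully combining the $U^-$-action on $f^*$ with $\C^2$-translation and extracting the invariant $t$. No conceptual obstacle arises, and once $S$ has been verified as an admissible global slice, the sufficient condition following Definition \ref{Definition: MT group} yields (ii) and (iii), so that $H$ is a Moore--Tachikawa group. Theorem \ref{2vumxgkk} then furnishes the induced TQFT $\Cob_2 \to \AMT$.
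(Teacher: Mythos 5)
Your proposal is correct and takes essentially the same approach as the paper's proof: compute the coadjoint action, identify $\h^*_{\mathrm{reg}}=\{(A,\beta):\beta\neq 0\}$ with codimension-two complement, exhibit a one-dimensional affine slice (a nilpotent line in $\fsl_2^*$ paired with a fixed nonzero covector), verify that it meets each regular orbit transversely in a singleton with abelian stabilizers along it, and conclude via the sufficient condition following Definition \ref{Definition: MT group} together with Theorem \ref{2vumxgkk}. The only differences are notational: the paper's slice is the mirror image of yours under $e\leftrightarrow f$, and your formula for $\Ad^*_{\exp(sf)}f^*$ has a sign slip in the $e^*$-coefficient which is immaterial, since the argument only uses that the $f^*$-coefficient is unchanged by the residual action.
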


\begin{proof}
Let $V = \C^2$ be the standard representation of $\SL_2$.
We identify $V^*$ with $V = \C^2$ in an $\SL(2, \C)$-equivariant way via the standard symplectic form, i.e.
\[
\C^2 \overset{\cong}{\too} V^*, \quad (\eta_1, \eta_2) \mtoo \omega((\eta_1, \eta_2), \cdot),
\]
where $\omega((\eta_1, \eta_2), (v_1, v_2)) = \eta_1 v_2 - \eta_2 v_1$.
We also identify $\fsl_2^*$ with $\fsl_2$ via the invariant bilinear form $(x, y) \mto \tr(xy)$.
A straightfoward computation then reveals that the coadjoint representation of $\h$ on $\h^* \cong \fsl_2 \times \C^2$ is
\[
\ad_{(x, u)}^*(\xi, \eta) =
\left(
\begin{pmatrix}
	(x_2 \xi_3 - x_3 \xi_2) - \tfrac{1}{2}(\eta_1 u_2 + \eta_2 u_1)
	& 2 (x_1 \xi_2 - x_2 \xi_1) + \eta_1 u_1 \\
	2 (x_3 \xi_1 - x_1 \xi_3) - \eta_2 u_2 & (x_3 \xi_2 - x_2 \xi_3) + \tfrac{1}{2}(\eta_1 u_2 + \eta_2 u_1)
\end{pmatrix},
\begin{pmatrix}
	x_1 \eta_1 + x_2 \eta_2 \\
	x_3 \eta_1 - x_1 \eta_2
\end{pmatrix}
\right),
\]
where $x = \left(\begin{smallmatrix} x_1 & x_2 \\ x_3 & -x_1\end{smallmatrix}\right)$, $u = \left(\begin{smallmatrix} u_1 \\ u_2 \end{smallmatrix}\right)$, $\xi = \left(\begin{smallmatrix} \xi_1 & \xi_2 \\ \xi_3 & -\xi_1\end{smallmatrix}\right)$, and $\eta = \left(\begin{smallmatrix} \eta_1 \\ \eta_2 \end{smallmatrix}\right)$. If $\eta \ne 0$, then $\ad_{(x, u)}^*(\xi, \eta) = 0$ if and only if there exists a constant $c \in \C$ such that
\[
x = c\begin{pmatrix} \eta_1 \eta_2 & -\eta_1^2 \\ \eta_2^2 & -\eta_1 \eta_2\end{pmatrix}
\quad\text{and}\quad
u = 
-2c\begin{pmatrix}
	\eta_1 \xi_1 + \eta_2 \xi_2 \\
	\eta_1 \xi_3 - \eta_2 \xi_1
\end{pmatrix}.
\]
It follows that the centralizer $\h_{(\xi,\eta)}$ is $1$-dimensional in this case. If $\eta = 0$, then $\ad_{(0, u)}^*(\xi, \eta) = 0$ for all $u\in\C^2$. The centralizer $\h_{(\xi,\eta)}$ must therefore have dimension at least two in this case. We conclude that
\[
\h^*\reg = \{(\xi, \eta) \in \fsl(2, \C) \times \C^2 : \eta \ne 0\}.
\]
This locus clearly has a complement of codimension two in $\h^*$.

It remains to verify (ii) and (iii) in Definition \ref{Definition: MT group}. We accomplish this by verifying the sufficient condition mentioned immediately after that definition. To this end, let $S\s\h^*$ be the image of
\[
\sigma : \C \too \h^*\reg, \quad 
z \mtoo
\left(
\begin{pmatrix} 0 & 0 \\ z & 0 \end{pmatrix}
,
\begin{pmatrix} 1 \\ 0 \end{pmatrix}
\right).
\]
Suppose that $(\xi, \eta) \in \h^*\reg$.
Since $\eta \ne 0$, there exists $g \in \SL_2$ such that $\rho_g^*(\eta) = (1, 0)$.
We may therefore assume that $\eta = (1, 0)$.
Note that the stabilizer of $(1, 0)$ is the set of $g \in \SL_2$ of the form
\[
g = \begin{pmatrix} 1 & a \\ 0 & 1 \end{pmatrix},\quad a\in\mathbb{C}.
\]
For such $g$ and $\eta = (1, 0)$, we have
\begin{equation}\label{1prdw1s0}
\Ad_{(g, u)}^*(\xi, \eta) = 
\left(
\begin{pmatrix}
	\xi_1 + a \xi_3 - \tfrac{u_2}{2} &
	-2a \xi_1 + \xi_2 - a^2 \xi_3 + u_1 \\
	\xi_3 &  - \xi_1 - a \xi_3 + \tfrac{u_2}{2}
\end{pmatrix},
\begin{pmatrix}
	1 \\ 0
\end{pmatrix}
\right).
\end{equation}
It follows that there is a unique $u = (u_1, u_2)$ such that $\Ad_{(g, u)}^*(\xi, \eta)$ is $S$.
In other words, $S$ intersects every regular coadjoint orbit exactly once.
Moreover, the intersection is transverse.
Equation \eqref{1prdw1s0} also shows the following: for all $z \in \C$, the stabilizer of $\sigma(z) \in \h^*\reg$ is the set of elements of $H$ of the form $(\left(\begin{smallmatrix}1&a\\0&1\end{smallmatrix}\right),\left(\begin{smallmatrix}a^2z\\2az\end{smallmatrix}\right))$, $a \in \C$; this is abelian.
Our proof is therefore complete
\end{proof}


\subsection{The centralizer of a minimal nilpotent element in $\fsl_3$}
One may weaken Definition \ref{Definition: MT group} by replacing $\gdreg$ with an arbitrary $G$-invariant open subset $U\s\g^*$. This open subset would be required to satisfy the following properties: 
\begin{itemize}
\item[\textup{(i)}] $U$ has a complement of codimension at least two in $\g^*$;
\item[\textup{(ii)}] $G_\xi$ is abelian for all $\xi \in U$;
\item[\textup{(iii)}] the pullback of the cotangent groupoid $T^*G\tto\g^*$ to $U$ is abelianizable, i.e. Morita equivalent to an abelian symplectic groupoid.
\end{itemize}

A sufficient condition for (ii) and (iii) to hold would be the existence of a smooth, closed subvariety $S\s\g^*$ with the following properties:
\begin{itemize}
	\item $S\s U$;
	\item $S$ intersects every coadjoint orbit in $U$ transversely in a singleton;
	\item $G_{\xi}$ is abelian for all $\xi\in S$. 
\end{itemize}

Suppose that an affine algebraic group $G$ admits a $G$-invariant open subset $U\s\g^*$ satisfying (i)--(iii). The cotangent groupoid $T^*G\tto\g^*$ is then clearly Hartogs abelianizable. As such, it determines a TQFT $\Cob_2\longrightarrow\AMT$.

\begin{remark}
The term \textit{Moore--Tachikawa group} could have been reserved for this generalization of Definition \ref{Definition: MT group}, i.e. for an affine algebraic group $G$ admitting a $G$-invariant open subset $U\s\g^*$ satisfying (i)--(iii) above. One of the reasons for preserving Definition \ref{Definition: MT group} is the bridge it provides to pure Lie theory. A Lie theorist could find meaningful examples satisfying Definition \ref{Definition: MT group} without reading our manuscript in detail.      
\end{remark}

\begin{proposition}\label{Proposition: sl3}
If $G \s \SL_3$ is the stabilizer of
\[
e \coloneqq \begin{pmatrix} 0 & 0 & 1 \\ 0 & 0 & 0 \\ 0 & 0 & 0 \end{pmatrix} \in \fsl(3, \C),
\]
then $G$ satisfies Conditions (i)--(iii) above. It thereby determines a TQFT $\Cob_2\longrightarrow\AMT$.
\end{proposition}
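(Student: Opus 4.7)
The plan is to produce a $G$-invariant open $U \s \g^*$ whose complement has codimension at least two, together with an admissible global slice $S \s U$ that is a closed affine subvariety of $\g^*$; by the sufficient condition recorded just before the proposition, this verifies (i)--(iii), and the desired TQFT follows from Theorem \ref{2vumxgkk}.

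I would first identify $G$ concretely. A matrix computation shows $G$ to be the $4$-dimensional connected subgroup of $\SL_3$ consisting of matrices $\left(\begin{smallmatrix} a & b & c \\ 0 & a^{-2} & f \\ 0 & 0 & a\end{smallmatrix}\right)$ with $a \in \C^\times$, and its Lie algebra $\g$ has basis $E_1 \coloneqq \diag(1,-2,1)$, $E_{12}$, $E_{13}$, $E_{23}$ with the only non-vanishing brackets $[E_1, E_{12}] = 3E_{12}$, $[E_1, E_{23}] = -3E_{23}$, $[E_{12}, E_{23}] = E_{13}$; in particular $E_{13}$ is central. Writing $\xi = (\xi_1, \xi_{12}, \xi_{13}, \xi_{23})$ in the dual basis, one computes
\[
\ad^*_{aE_1 + bE_{12} + cE_{13} + fE_{23}}\xi = (3b\xi_{12} - 3f\xi_{23},\ -3a\xi_{12} + f\xi_{13},\ 0,\ 3a\xi_{23} - b\xi_{13}),
\]
so $\xi_{13}$ is a Casimir, and a short check shows $I \coloneqq \xi_1\xi_{13} + 3\xi_{12}\xi_{23}$ to be a second algebraically independent Casimir. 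Solving $\ad^*_x\xi = 0$ gives $\dim\g_\xi = 2$ precisely when $(\xi_{12},\xi_{13},\xi_{23}) \neq 0$, whence $\gdreg$ has codimension-$3$ complement; the two generators of $\g_\xi$ on $\gdreg$ can always be taken to be $E_{13}$ and one further commuting vector, so $G_\xi$ is abelian for every $\xi \in \gdreg$.

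I would then take $U \coloneqq \g^* \setminus \{\xi_{13} = \xi_{23} = 0\}$. The locus $\{\xi_{13} = \xi_{23} = 0\}$ is preserved by each of $\exp(aE_1), \exp(bE_{12}), \exp(fE_{23})$, so $U$ is $G$-invariant with codimension-$2$ complement, giving (i); since $U \s \gdreg$, condition (ii) follows from the above. The slice is
\[
S \coloneqq \{(0, \xi_{12}, \xi_{13}, 1) : \xi_{12}, \xi_{13} \in \C\} \subset \g^*,
\]
a smooth closed affine $2$-plane lying in $U$. Using the explicit one-parameter subgroups, each $G$-orbit in $U$ is shown to be a joint level set of $(I, \xi_{13})$: for $\xi_{13}\neq 0$, the orbit of $(I/\xi_{13}, 0, \xi_{13}, 0)$ exhausts the level set via successive applications of $\exp(bE_{12})$ and $\exp(fE_{23})$; for $\xi_{13} = 0$ and $\xi_{23}\neq 0$, the orbit is $\{(\xi_1, 0, 0, \nu) : \nu \in \C^\times, \xi_1 \in \C\}$ when $I = 0$ and $\{(\xi_1, \lambda, 0, I/(3\lambda)) : \lambda \in \C^\times, \xi_1 \in \C\}$ when $I \neq 0$. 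The map $S \to \C^2$, $(\xi_{12}, \xi_{13}) \mapsto (3\xi_{12}, \xi_{13})$, identifies $S$ with the orbit space of $U$, so $S$ meets each orbit in exactly one point. Transversality is a dimension count: requiring $\ad^*_x\xi \in T_\xi S$ forces $f = b\xi_{12}$ and $a = b\xi_{13}/3$, which makes the second component vanish automatically, so $T_\xi S \cap T_\xi(G\cdot\xi) = 0$ and the dimensions sum to $4 = \dim\g^*$. Finally, $\g_\xi$ at each $\xi \in S$ has basis $E_{13}$ together with one further commuting generator (explicitly $E_1 + (3/\xi_{13})E_{12} + (3\xi_{12}/\xi_{13})E_{23}$ when $\xi_{13}\neq 0$, $(1/\xi_{12})E_{12} + E_{23}$ when $\xi_{13} = 0, \xi_{12}\neq 0$, and $E_{12}$ when $\xi_{12} = \xi_{13} = 0$), so the isotropy is abelian.

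The main obstacle I anticipate is the orbit bookkeeping on the stratum $\xi_{13} = 0$. Unlike the reductive setting of Lemma \ref{Lemma: Hartogs}, the orbit $\{(\xi_1, \mu, 0, 0) : \mu \neq 0, \xi_1 \in \C\}$ inside $\gdreg$ is missed by any closed affine slice with $\xi_{23} = 1$, so it must be excluded from $U$; this is why $U$ omits the whole locus $\{\xi_{13} = \xi_{23} = 0\}$, which is the closure of that orbit and has codimension exactly two --- precisely what condition (i) permits. Once $S \s U$ is verified to be an admissible global slice, the sufficient condition for modified Moore--Tachikawa groups gives (i)--(iii), and Theorem \ref{2vumxgkk} produces the TQFT $\Cob_2 \longrightarrow \AMT$.
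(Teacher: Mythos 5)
Your construction is essentially the paper's own proof. Your slice $S=\{(0,\xi_{12},\xi_{13},1)\}$ is exactly the slice $\sigma_2(u,w)=(0,u,1,w)$ appearing in the paper, and your open set $U=\{(\xi_{13},\xi_{23})\neq(0,0)\}$ is the corresponding invariant locus with codimension-two complement; your coadjoint-action formula, the Casimirs $\xi_{13}$ and $I=\xi_1\xi_{13}+3\xi_{12}\xi_{23}$, the orbit bookkeeping, and the transversality computation all check out. The paper phrases everything at the group level via $\Ad^*_g$ and simply exhibits the two slices $\sigma_1,\sigma_2$ without spelling out the Casimir/orbit analysis, but the route is the same.

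The one genuinely incomplete step is the passage from ``$\g_\xi$ is abelian'' to ``$G_\xi$ is abelian.'' The sufficient condition you invoke requires the isotropy \emph{groups} $G_\xi$, $\xi\in S$, to be abelian, and this does not follow formally from abelianness of $\g_\xi$, because the stabilizers here are disconnected: for $\xi\in S$ with $\xi_{13}=0$ (and likewise on the stratum $\xi_{13}=\xi_{23}=0$ of $\gdreg$ that you also mention), solving $\Ad^*_g\xi=\xi$ forces $r^3=1$, so $G_\xi$ has component group $\ZZ/3\ZZ$, and a disconnected group with abelian identity component need not be abelian. You need the short group-level check the paper performs: writing $g=(r,a,b,c)$ as in the paper's parametrization of $G$, the stabilizer of $(0,\xi_{12},1,\xi_{13})$ is $\{(r,a,b,c): a=\tfrac{1}{\xi_{13}}(r-r^{-2}),\ b=\tfrac{\xi_{12}}{\xi_{13}}(r-r^{-2})\}$ when $\xi_{13}\neq 0$, and $\{(r,a,a\xi_{12},c): r^3=1\}$ when $\xi_{13}=0$; in both cases a direct computation with the group multiplication shows these subgroups are abelian. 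With that two-line verification inserted, your proof is complete and agrees with the paper's.
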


\begin{proof}
The group $G$ can be written explicitly as
\[
G = \left\{ \begin{pmatrix} r & a & c \\ 0 & r^{-2} & b \\ 0 & 0 & r \end{pmatrix} : (r, a, b, c) \in \C^{\times} \times \C^3\right\},
\]
with Lie algebra
\begin{equation}\label{92o57qah}
	\g = \left\{\begin{pmatrix} t & x & z \\ 0 & -2t & y \\ 0 & 0 & t \end{pmatrix} : (t, x, y, z) \in \C^4 \right\}.
\end{equation}
Identify $\g^*$ with $\C^4$ via the coordinates dual to those in \eqref{92o57qah}.
For $g = (r, a, b, c) \in G$ and $(s, u, v, w) \in \g^*$, we have
\[
\Ad_g^*(s, u, v, w) =
\left(
s + 3 \left(\frac{a}{r} u - b r^2 v + abr w\right),
\frac{u}{r^3} + \frac{wb}{r},
v r^3 - war^2,
w
\right).
\]
One then sees that we have two transverse slices, given by
\begin{align*}
\sigma_1 :	\C \times \C &\too \g^*, \quad (v, w) \mtoo (0, 1, v, w) \\
\sigma_2 :	\C \times \C &\too \g^*, \quad (u, w) \mtoo (0, u, 1, w). 
\end{align*}
They are slices for the set of regular elements with $(u, w) \ne (0, 0)$ and $(v, w) \ne (0, 0)$, respectively, which both have complements of codimension two.
If $w \ne 0$, the stabilizer of $\sigma_1(v, w) = (0, 1, v, w) \in \g^*$ is the set of $g = (r, a, b, c)$ such that $a = \frac{v}{w}(r - 1/r^2)$ and $b = \frac{1}{w}(r - 1/r^2)$, which is abelian.
If $w = 0$, the stabilizer is the set of $g = (r, a, b, c)$ such that $r^3 = 1$ and $a = bv$, which is also abelian.
It follows that $\sigma_1$ is an admissible Hartogs slice.
A similar argument shows that $\sigma_2$ has the same property.
\end{proof}

\begin{remark}
More generally, let $\g$ be a simple Lie algebra of type $A$ or $C$. Consider the centralizers $G_e\s G$ and $\g_e\s\g$ of a nilpotent element $e\in\g$ under the adjoint representations of $G$ and $\g$, respectively. By \cite[Theorem 3.14]{pan-pre-yak:07}, $(\g_e)^*_{\text{reg}}$ has a complement of codimension at least two in $(\g_e)^*$. It is therefore reasonable to expect $G_e$ to satisfy Conditions (i)--(iii) from the beginning of this subsection, and for a more general version of Proposition \ref{Proposition: sl3} to hold.
\end{remark}

%
%
%
%

\bibliographystyle{acm} 
\bibliography{tqfts}

\end{document}